\newcommand{\ba}{\mathbb A}
\newcommand{\bb}{\mathbb B}
\newcommand{\be}{\mathbb E}
\newcommand{\bk}{\mathbb K}
\newcommand{\bn}{\mathbb N}
\newcommand{\br}{\mathbb R}
\newcommand{\dist}{{\rm dist}}
\DeclareMathOperator*{\esssup}{ess\,sup}
\newcommand{\nc}{\newcommand}
\newtheorem{proposition}{Proposition}[section]
\newtheorem{theorem}[proposition]{Theorem}
\newtheorem{corollary}[proposition]{Corollary}
\newtheorem{lemma}[proposition]{Lemma}
\theoremstyle{definition}
\newtheorem{definition}[proposition]{Definition}
\newtheorem{remark}[proposition]{Remark}
\newtheorem{example}[proposition]{Example}
\numberwithin{equation}{section}
\nc{\cD}{{\cal D}}
\nc{\cP}{{\cal P}}
\nc{\cR}{{\cal R}}
\nc{\eps}{\varepsilon}
\def\e{{\rm e}}
\def\conv{{\rm conv}}
\def\supp{{\rm supp}}
\def\spa{{\rm span}}
\def\d{{\rm d}}
\nc{\Om}{\Omega}
\nc{\om}{\omega}
\nc{\aal}{\alpha}
\nc{\tow}{\rightharpoonup}
\nc{\nin}{\in \hs{-.35}/\,}
\nc{\np}{\newpage}
\nc{\g}{\gamma}
\nc{\IN}{I \hs{-.15} N}
\nc{\IR}{I \hs{-.14} R}
\nc{\IK}{I \hs{-.14} K}
\nc{\hs}[1]{\hspace{#1cm}}
\def\C#1{{\mathcal {#1}}}
\title[Set-valued processes and their pullback attractors]
{Minimality properties of set-valued processes and their pullback attractors}
\author[M. Coti Zelati, P. Kalita]
{Michele Coti Zelati and Piotr Kalita}
\date{\today}
\address{Indiana University - Mathematics Department
\newline\indent
Rawles Hall, Bloomington, IN 47405, USA}
\email{micotize@indiana.edu {\rm (M.\ Coti Zelati)}}
\address{Jagiellonian University - Faculty of Mathematics and Computer Science 
\newline\indent
{\L}ojasiewicza 6, 30-348 Krak\'ow, Poland}
\email{piotr.kalita@ii.uj.edu.pl {\rm (P.\ Kalita)}}
\subjclass[2000]{35B41, 35R70, 37B55, 37L05}
\keywords{Nonautonomous parabolic problems, multivalued processes, asymptotic behavior, pullback attractors, infinite-dimensional dynamical systems}
\begin{document}

\begin{abstract}
We discuss the existence of pullback attractors for multivalued dynamical systems
on metric spaces. Such attractors are shown to exist without any assumptions in 
terms of  continuity of the solution maps, based only on minimality properties with respect to
the notion of pullback attraction. When invariance is required, a very weak closed graph condition
on the solving operators is assumed. The presentation is complemented with examples and counterexamples
to test the sharpness of the hypotheses involved, including a reaction-diffusion equation, a discontinuous 
ordinary differential equation and an irregular form of the heat equation.
\end{abstract}

\maketitle

\tableofcontents


 \section{Introduction}
 \noindent The study of many evolution problems arising from mechanics and physics often 
 involves the understanding of certain classes of operators which act as solution maps to 
 systems of ordinary or partial differential equations. Given a normed space $X$, the longtime
 behavior of a differential equation of the form
\begin{equation}\label{eq:eq1}
\begin{cases}
u_t=A(t,u), \quad t>\tau\in \br,\\
u(\tau)=u_\tau\in X,
\end{cases}
\end{equation}
can be understood through the asymptotic properties of the family of solving operators
$$
U(t,\tau;\cdot):X\to X, \qquad U(t,\tau;u_\tau)=u(t),\qquad t\geq \tau\in \br.
$$
When the (possibly nonlinear) operator $A$ in \eqref{eq:eq1} does not depend explicitly on time,
the system is said to be autonomous, and the theory of such infinite-dimensional dynamical 
systems has been developed over the last four decades in many nowadays classical references \cites{dlotko, Hale,Lady,robinson,SellYou,T3}.

In the nonautonomous case, namely when the operator $A$ in \eqref{eq:eq1} is time dependent, two main objects
are widely used in the literature today. On the one hand, the concept of uniform attractor, introduced in 
\cite{CHVI94} by V.V. Chepyzhov and M.I. Vishik (see also the book \cite{CVbook}); on the other hand, the idea of pullback attractors,
started in \cites{CDF,KS,LS} for both random and nonautonomous systems.

In the above discussion, we tacitly assumed that problem \eqref{eq:eq1} is well-posed, i.e. existence and uniqueness of solutions
is available. However, for many interesting problems it is known that solutions exist globally in time, but their uniqueness is unknown
or even false in the most extreme cases. Perhaps the most famous example are the three-dimensional incompressible Navier-Stokes equations, 
studied both in the autonomous  case \cites{Ball,Ches,ChesFo,RO,SE} and the nonautonomous one \cites{CK1,CK2,ChesLu2,Voro}; other 
examples consist of
 reaction-diffusion and wave equations with non-Lipschitz nonlinearities \cites{Ball2,ChesLu1},
evolutionary differential inclusions \cites{lukaszewicz_kalita, Kasyanov2012, Kasyanov2013, melnik-1998, melnik-2000, melnik-2008, valero_book}, the surface quasi-geostrophic equation \cite{ChesDai}, 
and nonlinear Galerkin schemes \cites{CZmeas,CZTone}.

In this article, we give a fairly complete picture on the theory of nonautonomous dynamical systems and their pullback attractors. Classical
theorems on their existence are proved under minimal requirements on the set-valued maps involved: this is crucial to treat possibly ill-posed problems
like the ones mentioned above, as the lack of uniqueness is typically linked to low regularity properties of solutions. We further provide 
examples and counterexamples to investigate the sharpness of the assumptions involved in the abstract results.
 
\subsection{Plan of the article}
Section \ref{sec:multi} is dedicated to the general theory of multivalued processes, with particular emphasis on various
equivalent concepts related to asymptotic compactness. The definition of pullback attractor is given in Section \ref{sec:dissip},
in which pullback dissipativity is required to prove existence theorems on pullback attractors. At this stage, no continuity-like properties
are required on the multivalued processes, at the price of not requiring invariance of the attractors under the flow. 
Minimal continuity properties are explored in Section \ref{sec:inv}, and their link to invariance properties of limit sets is highlighted.
In Section \ref{sec:react} we provide an example of a reaction-diffusion equation with 
multivalued semilinear term having the form of the Clarke subgradient, and we prove the existence and invariance of a pullback attractor, based
on the theoretical results previously discussed. The last Section \ref{sec:point} is dedicated to certain 
examples of (autonomous) dynamical systems, some arising from (discontinuous) ODEs or PDEs, to examine
various concepts of dissipativity and invariance in highly irregular problems.

\section{Multivalued processes and  nonautonomous sets}\label{sec:multi}

\noindent Let $(X,\varrho)$ be a complete metric space, $P(X)$ the family of all nonempty  subsets of $X$,  and $\C{B}(X)$ the family of all 
nonempty and bounded subsets of $X$.
We denote by $\br_{d}=\{(s,t)\in \br^2 : s\geq t\}$. Define
$$
\dist(y,B) = \inf_{x\in B}\varrho(x,y).
$$ 
In this way, the Hausdorff semidistance between subsets of $X$ takes the form 
$$
\dist(A,B)=\sup_{y\in A}\dist(y,B).
$$ 
If $B\subset X$, then we denote $N_\eps(B) = \{x\in X\ : \ \dist(x,B)\leq \eps\}$.
We begin with a few definitions.
\begin{definition}
A mapping $U:\br_{d}\times X\to P(X)$ is called a multivalued process (\emph{m-process} for short) if:
\begin{enumerate}[label=(\roman*)]
\item $U(t,t;x)=\{x\}$ for all $t\in\br$, $x\in X$; \label{i}
\item $U(t,\tau;x)\subset U(t,s;U(s,\tau;x))$ for all $t\geq s\geq \tau$, $x\in X$, where 
$$
U(t,s;A)=\bigcup_{y\in A}U(t,s;y)
$$ 
for $A\in P(X)$ and $(t,s)\in \br_{d}$.\label{ii}
\end{enumerate}
The m-process $U$ is  called  \emph{strict}  if the equality $U(t,\tau;x) = U(t,s;U(s,\tau;x))$ holds in \ref{ii}.
\end{definition}

\begin{definition}
A family of sets $\bk=\{K(t)\subset X: t\in\br\}$ will be called a \emph{nonautonomous set}. The family $\bk$ is closed (compact, bounded) if $K(t)$ is closed (compact, bounded) for all $t\in\br$. The family 
$\bk$ is said to be \emph{backward bounded} if the sets $\bigcup_{t\leq \tau}K(t)$ are bounded for all $\tau \in\br$.
\end{definition}

\begin{definition}
The nonautonomous set $\bk$ is pullback attracting for the m-process $U$ if for every $t\in \br$ and $B\in \C{B}(X)$ we have 
$$
\lim_{\tau\to -\infty} \dist(U(t,\tau;B),K(t)) = 0.
$$
\end{definition}
 
\begin{definition}
An m-process $U$ is called \emph{pullback asymptotically compact} if for every $B\in \C{B}(X)$,  $t\in \br$ and any  sequences $\tau_n\to -\infty$
and  $\xi_n\in U(t,\tau_n;B)$, there exists $\xi\in X$ and a subsequence $\xi_{n_k}\to\xi$.
\end{definition} 
In what follows, we will omit the word ``pullback'' when referring to a pullback asymptotically compact m-process, in order to avoid
useless redundancies. It is clear that asymptotic compactness will always be understood in the pullback sense.
It is useful to compare the above definition with the asymptotic compactness of single valued processes defined in \cite{gl-2006-NATMA} where the more general formalism of cocycles is considered.

\subsection{The pullback $\omega$-limit}
An important role in our analysis is played by the so-called $\omega$-limit sets. Let $B\in \C{B}(X)$. The pullback $\omega$-limit set of
$B$ is a nonautonomous set $\Omega(B)=\{\omega(t,B): t\in\br\}$, where, for every $t\in\br$, we set
\begin{equation}\label{eq:omega}
\omega(t,B) = \bigcap_{s\leq t}\overline{\bigcup_{\tau\leq s}U(t,\tau;B)}.
\end{equation}
Equivalently, for each $t\in\br$, we can write
\begin{equation}\label{equiv:omega}
\omega(t,B) = \big\{x\in X: x_n\to x \text{ for some } x_n\in U(t,\tau_n;B), \,\tau_n\to-\infty\big\}.
\end{equation}
The importance of $\omega$-limit sets is highlighted by the following properties.
\begin{lemma}\label{lem:omegalim}
Let $U$ be an asymptotically compact m-process. Then for each $B\in \C{B}(X)$ and each $t\in \br$, $\omega(t,B)$ is
nonempty and compact. Moreover, the nonautonomous set $\Omega(B)$ pullback attracts $B$. 
\end{lemma}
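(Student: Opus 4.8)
The plan is to establish the three assertions — nonemptiness, compactness, and pullback attraction — in that order, exploiting the characterization \eqref{equiv:omega} of $\omega(t,B)$ as the set of subsequential limits of points $x_n\in U(t,\tau_n;B)$ with $\tau_n\to-\infty$.

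For \emph{nonemptiness}: fix $t\in\br$ and $B\in\C{B}(X)$. Pick any sequence $\tau_n\to-\infty$ and any points $x_n\in U(t,\tau_n;B)$ (these sets are nonempty by definition of an m-process). By pullback asymptotic compactness, there is a subsequence $x_{n_k}\to x$ for some $x\in X$. By \eqref{equiv:omega}, $x\in\omega(t,B)$, so $\omega(t,B)\neq\emptyset$.

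For \emph{compactness}: since $X$ is complete, it suffices to show $\omega(t,B)$ is closed and totally bounded; in fact I would show it is closed and \emph{sequentially precompact}. Closedness is immediate from \eqref{eq:omega}, being an intersection of closed sets. For precompactness, take any sequence $y_m\in\omega(t,B)$; by \eqref{equiv:omega} each $y_m$ is a limit of points from $U(t,\tau_n^{(m)};B)$ with $\tau_n^{(m)}\to-\infty$, so by a diagonal argument I can choose $z_m\in U(t,\sigma_m;B)$ with $\sigma_m\to-\infty$ and $\varrho(z_m,y_m)\to 0$. Asymptotic compactness gives a convergent subsequence $z_{m_j}\to z$, hence $y_{m_j}\to z$; and $z\in\omega(t,B)$ by \eqref{equiv:omega}, or simply because $\omega(t,B)$ is closed. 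Thus $\omega(t,B)$ is compact.

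For \emph{pullback attraction}: I argue by contradiction. Suppose $\Omega(B)$ does not pullback attract $B$, so there exist $t\in\br$, $\eps>0$, a sequence $\tau_n\to-\infty$, and points $x_n\in U(t,\tau_n;B)$ with $\dist(x_n,\omega(t,B))\geq\eps$ for all $n$. By asymptotic compactness, a subsequence $x_{n_k}\to x$; by \eqref{equiv:omega}, $x\in\omega(t,B)$, so $\dist(x_{n_k},\omega(t,B))\to 0$, contradicting $\dist(x_{n_k},\omega(t,B))\geq\eps$.

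The main obstacle is the diagonal extraction in the compactness step: one must pass from the \emph{closure} in \eqref{eq:omega} to honest sequences lying in the images $U(t,\sigma;B)$ so that asymptotic compactness can be applied, and keep the auxiliary times $\sigma_m$ marching to $-\infty$ while controlling $\varrho(z_m,y_m)$. Everything else is a routine unwinding of the definitions together with the equivalence of \eqref{eq:omega} and \eqref{equiv:omega}.
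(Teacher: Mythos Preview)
Your proposal is correct and follows essentially the same route as the paper: nonemptiness via asymptotic compactness and \eqref{equiv:omega}, compactness via a diagonal extraction (the paper makes this explicit by choosing for each $n$ an index $k_0(n)$ with $\tau^{k_0(n)}_n\le -n$ and $\varrho(\eta^{k_0(n)}_n,\xi_n)\le 1/n$), and attraction by the same contradiction argument. The only cosmetic difference is that you invoke closedness from \eqref{eq:omega} before proving sequential precompactness, whereas the paper simply verifies sequential compactness directly.
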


\begin{proof}
The fact the for every $t\in\br$ the set $\omega(t,B)$ is nonempty follows directly from the assumption that $U$
is asymptotically compact and \eqref{equiv:omega}, while it is clear the $\omega(t,B)$ is a closed set. 

For compactness, fix $t\in\br$, and let $\{\xi_n\}_{n\in\bn}\subset \omega(t,B)$
be an arbitrary sequence.
By definition, for each $n\in\bn$ there exist sequences $\{\tau^k_n\}_{k\in\bn}$ and $\{\eta^k_n\}_{k\in\bn}\subset U(t,\tau^k_n;B)$
such that  
$$
\tau^k_n\to -\infty\quad \text{and}\quad\eta^k_n\to \xi_n\quad\text{as }k\to\infty.
$$
For each $n$ we choose $k_0(n)$ such that for $k\geq k_0(n)$ we have $\tau^k_n\leq -n$ and $\varrho(\eta^k_n,\xi_n)\leq 1/n$. 
Observe that $\eta^{k_0(n)}_n\in U(t,\tau^{k_0(n)}_n;B)$ with $\tau^{k_0(n)}_n\to -\infty$ as $n\to\infty$. Hence, from asymptotic compactness, 
for a subsequence we have $\eta^{k_0(n)}_n\to \xi$ for some $\xi\in \omega(t,B)$. 
In turn, for this subsequence we must have $\xi_n\to \xi$, which shows compactness.

Let us prove that $\omega(t,B)$ attracts $B$, namely that
$$
\lim_{\tau\to -\infty} \dist(U(t,\tau;B), \omega(t,B))=0.
$$
Suppose, for contradiction, that this is not the case. Then, we are able to find sequences 
$\tau_n\to -\infty$, $\xi_n\in U(t,\tau_n;B)$ and $\eps>0$ such that
$\dist(\xi_n, \omega(t,B)) > \eps$. Possibly passing to a subsequence, due to  asymptotic compactness
it must be that $\xi_n\to \xi$ for some $\xi\in \omega(t,B)$, which is a contradiction.
\end{proof}

The following proposition shows an equivalence
between asymptotic compactness and the attraction properties of $\omega$-limit sets.

\begin{proposition}\label{prop:asymptotic}
Let $(X,\varrho)$ be a complete metric space and let $U$ be an m-process. The following are equivalent.
\begin{itemize}
\item[(1)] $U$ is pullback asymptotically compact.
\item[(2)] For every $B\in \C{B}(X)$, the nonautonomous set $\Omega(B)$ is nonempty, compact and it pullback attracts $B$.
\end{itemize}
\end{proposition}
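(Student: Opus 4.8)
The plan is to prove the two implications separately. The direction (1) $\Rightarrow$ (2) is essentially Lemma \ref{lem:omegalim}: asymptotic compactness is exactly the standing hypothesis there, and the conclusion of that lemma — that $\omega(t,B)$ is nonempty and compact for each $t$ and that $\Omega(B)$ pullback attracts $B$ — is precisely statement (2). So this direction requires nothing beyond invoking Lemma \ref{lem:omegalim}.

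The substantive direction is (2) $\Rightarrow$ (1). Here I would argue by contradiction: suppose $U$ is not pullback asymptotically compact. Then there exist $B\in\C{B}(X)$, $t\in\br$, a sequence $\tau_n\to-\infty$, and points $\xi_n\in U(t,\tau_n;B)$ admitting no convergent subsequence. The idea is to compare this sequence with the attracting set $\omega(t,B)$, which by hypothesis is nonempty, compact, and pullback attracts $B$. From the attraction property, $\dist(U(t,\tau_n;B),\omega(t,B))\to 0$, hence $\dist(\xi_n,\omega(t,B))\to 0$. For each $n$, using nonemptiness and compactness of $\omega(t,B)$, pick $y_n\in\omega(t,B)$ with $\varrho(\xi_n,y_n)=\dist(\xi_n,\omega(t,B))\to 0$ (or simply $\varrho(\xi_n,y_n)\le \dist(\xi_n,\omega(t,B))+1/n$). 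By compactness of $\omega(t,B)$, pass to a subsequence along which $y_{n_k}\to y\in\omega(t,B)$; then $\varrho(\xi_{n_k},y)\le\varrho(\xi_{n_k},y_{n_k})+\varrho(y_{n_k},y)\to 0$, so $\xi_{n_k}\to y$, contradicting that $\{\xi_n\}$ has no convergent subsequence. Hence $U$ is pullback asymptotically compact.

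I expect no serious obstacle: the only mild subtlety is that the attraction hypothesis in (2) is stated for bounded sets $B$ and for the whole family $U(t,\tau;B)$, so one must make sure the sequence $\xi_n$ is genuinely extracted from $U(t,\tau_n;B)$ with $\tau_n\to-\infty$, which is exactly the form in which the negation of pullback asymptotic compactness presents itself. The compactness of $\omega(t,B)$ is what lets us replace "distance to the set tends to $0$" by "converges to a point of the set," and this is the single place where the full strength of (2) — not just attraction, but attraction by a *compact* set — is used.
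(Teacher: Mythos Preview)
Your proposal is correct and follows essentially the same approach as the paper: (1) $\Rightarrow$ (2) is deferred to Lemma~\ref{lem:omegalim}, and for (2) $\Rightarrow$ (1) both arguments use the attraction property to get $\dist(\xi_n,\omega(t,B))\to 0$, approximate each $\xi_n$ by a point of the compact set $\omega(t,B)$, and extract a convergent subsequence via compactness. The only cosmetic difference is that you frame (2) $\Rightarrow$ (1) by contradiction whereas the paper argues directly, but the underlying steps are identical.
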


\begin{proof}
In light of Lemma \ref{lem:omegalim}, we only need to show one direction. Hence, assume that for every $B\in \C{B}(X)$, the nonautonomous set $\Omega(B)$ is nonempty, compact and it pullback attracts $B$. Fix $B\in \C{B}(X)$, $t\in\br$ and
sequences $\tau_n\to-\infty$ and $\xi_n\in U(t,\tau_n;B)$. We want to prove that $\{\xi_n\}_{n\in\bn}$ contains
a convergent subsequence. By assumption,  
$$
\dist(\xi_n,\omega(t,B))\leq \dist (U(t,\tau_n;B),\omega(t,B))\to 0\qquad  \text{as } n\to\infty.
$$
Therefore, by definition of Hausdorff semidistance, there exists a sequence $\eta_n\in \omega(t,B)$ such that $\varrho(\xi_n,\eta_n)\to 0$.
Since $\omega(t,B)$ is compact, we deduce the existence of a point $\xi\in \omega(t,B)$ and a subsequence 
$\eta_{n_k}\to \xi$. In turn, $\xi_{n_k}\to \xi$, and therefore  $U$ is asymptotically compact.
\end{proof}

\begin{remark}
Proposition \ref{prop:asymptotic} 
remains valid even if the metric space $(X,\varrho)$ is not complete. This generalization 
is useful in the study of doubly nonlinear equations, as shown in \cites{Seg1,RSS}.
\end{remark}

\subsection{Pullback $\omega$-limit compactness}
Given a bounded set $B\subset X$, the \emph{Kuratowski measure of noncompactness} $\kappa(B)$ of
$B$ is defined as
$$
\kappa(B)=\inf\big\{\delta\, :\, B\ \text{has a finite cover by balls of } X \text{ of diameter less than } \delta\big\}.
$$
We list hereafter some properties of $\kappa$ (see e.g. \cite{Hale}):
\begin{enumerate}[label=(K.\arabic*)]
	\item $\kappa(B)=\kappa(\overline{B})$; \label{K.1}
	\item $B_1\subset B_2$ implies that $\kappa(B_1)\leq \kappa(B_2)$; \label{K.2}
	\item $\kappa(B)=0$ if and only if $\overline{B}$ is compact; \label{K.3}
	\item fix $t\in\br$; if $\{B_s:s\leq t\}$ is a family of nonempty closed sets such that $B_{s_1}\subset B_{s_2}$ for
			$s_1<s_2$ and $\displaystyle\lim_{s\to-\infty}\kappa(B_s)=0$, 
			then $\displaystyle B=\bigcap_{s\leq t} B_s$ is nonempty and compact; \label{K.4}
	\item if $\{B_s\,:\, s\leq t\}$ and $B$ are as above, given any $s_n\to-\infty$ and any $x_n\in B_{s_n}$, there exist 
			$x\in B$ and a subsequence $x_{n_k}\to x$; \label{K.5}
	\item If $X$ is a Banach space, $\kappa(B_1+B_2)\leq \kappa(B_1)+\kappa(B_2).$ \label{K.6}
\end{enumerate}
\begin{definition}
The m-process $U$ is \emph{pullback $\omega$-limit compact} if for every $B\in \C{B}(X)$ and every $t\in\br$ we have
\begin{equation}\label{eq:kura}
\lim_{\tau\to-\infty}\kappa\left(\bigcup_{s\leq \tau}U(t,s;B)\right)=0.
\end{equation}
\end{definition}

Although somewhat obscure at first, pullback $\omega$-limit compactness is completely equivalent
to asymptotic compactness. Here, the requirement of $X$ be complete is obviously essential.

\begin{proposition}\label{prop:firstequiv}
Let $(X,\varrho)$ be a complete metric space,  and let $U$ be an m-process. The following are equivalent.
\begin{itemize}
\item[(1)] $U$ is asymptotically compact.
\item[(2)] For every $B\in \C{B}(X)$, the nonautonomous set $\Omega(B)$ is nonempty, compact and it pullback attracts $B$.
\item[(3)] $U$ is pullback $\omega$-limit compact 
\end{itemize}
\end{proposition}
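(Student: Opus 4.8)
The plan is to establish Proposition \ref{prop:firstequiv} by leveraging Proposition \ref{prop:asymptotic}, which already gives the equivalence $(1)\Leftrightarrow(2)$. Thus it remains only to link $(3)$ to the other two, and the cleanest route is to prove $(3)\Rightarrow(2)$ and $(1)\Rightarrow(3)$, closing the cycle.

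For $(3)\Rightarrow(2)$, fix $B\in\C{B}(X)$ and $t\in\br$, and set $B_s=\overline{\bigcup_{\tau\leq s}U(t,\tau;B)}$ for $s\leq t$. These sets are nonempty, closed, and nested ($B_{s_1}\subset B_{s_2}$ for $s_1<s_2$), and by \ref{K.1} together with the pullback $\omega$-limit compactness hypothesis \eqref{eq:kura} we have $\kappa(B_s)\to0$ as $s\to-\infty$. Property \ref{K.4} then immediately yields that $\omega(t,B)=\bigcap_{s\leq t}B_s$ is nonempty and compact. For the pullback attraction, suppose for contradiction it fails: then there are $\tau_n\to-\infty$, $\xi_n\in U(t,\tau_n;B)$, and $\eps>0$ with $\dist(\xi_n,\omega(t,B))>\eps$. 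Since $\xi_n\in B_{\tau_n}$ with $\tau_n\to-\infty$, property \ref{K.5} furnishes a subsequence $\xi_{n_k}\to\xi\in\omega(t,B)$, contradicting $\dist(\xi_n,\omega(t,B))>\eps$. Hence $\Omega(B)$ pullback attracts $B$, giving $(2)$.

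For $(1)\Rightarrow(3)$, fix $B\in\C{B}(X)$ and $t\in\br$. We must show $\kappa\big(\bigcup_{s\leq\tau}U(t,s;B)\big)\to0$ as $\tau\to-\infty$. By \ref{K.1} this equals $\kappa(B_\tau)$ with $B_\tau$ as above, and by Lemma \ref{lem:omegalim}, $\Omega(B)$ pullback attracts $B$ and $\omega(t,B)$ is compact. Given $\eps>0$, pick $\tau_0$ so that $\dist(U(t,s;B),\omega(t,B))\leq\eps$ for all $s\leq\tau_0$; then $\bigcup_{s\leq\tau}U(t,s;B)\subset N_\eps(\omega(t,B))$ for every $\tau\leq\tau_0$, and since this also holds after taking closures, $\kappa(B_\tau)\leq\kappa(N_\eps(\omega(t,B)))$ by \ref{K.2}. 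Because $\omega(t,B)$ is compact, it admits a finite cover by balls of arbitrarily small diameter, and enlarging each such ball by $\eps$ covers $N_\eps(\omega(t,B))$; hence $\kappa(N_\eps(\omega(t,B)))\leq 2\eps + \kappa(\omega(t,B)) = 2\eps$ (or more crudely $\leq C\eps$). Letting $\eps\to0$ shows $\kappa(B_\tau)\to0$, which is $(3)$.

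The argument is essentially a bookkeeping exercise once the correct monotone family $\{B_s\}$ is identified, so the main subtlety — rather than a genuine obstacle — is making sure the estimate $\kappa(N_\eps(K))\leq C\eps$ for compact $K$ is handled correctly in a general metric space (where \ref{K.6} is unavailable): one covers $K$ by finitely many balls of diameter $<\eps$, and the $\eps$-neighborhoods of those balls have diameter $<3\eps$ and cover $N_\eps(K)$, so $\kappa(N_\eps(K))\leq 3\eps$. The rest is a routine combination of the stated properties \ref{K.1}–\ref{K.5} of the Kuratowski measure with Lemma \ref{lem:omegalim} and Proposition \ref{prop:asymptotic}.
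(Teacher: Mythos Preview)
Your proof is correct and follows essentially the same route as the paper: the implication $(3)\Rightarrow(2)$ is proved identically via the nested family $B_s$ and properties \ref{K.1}, \ref{K.4}, \ref{K.5}, and your $(1)\Rightarrow(3)$ is really the paper's $(2)\Rightarrow(3)$ in disguise (you invoke Lemma~\ref{lem:omegalim} to get compactness and attraction of $\omega(t,B)$, which is exactly $(2)$, and then run the same finite-cover argument). The only cosmetic difference is that the paper covers $\omega(t,B)$ by balls of radius $\eps/2$ to get a cover of diameter $\eps$ directly, whereas you land on $3\eps$; both are fine.
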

\begin{proof}
First we will show that the third condition implies the second one. Choose a set $B\in \C{B}(X)$, 
$t\in \br$ and define, for $\tau\leq t$
$$B_\tau(t)=\overline{\bigcup_{s\leq \tau}U(t,s;B)}.$$
Obviously, for every $\tau\leq t$ this set is nonempty and closed. Moreover,
$B_{\tau_1}(t)\subset B_{\tau_2}(t)$ for $\tau_1\leq \tau_2\leq t$, and, by \ref{K.1} we have
$$
\lim_{\tau\to -\infty}\kappa(B_\tau(t)) = 0.
$$
Hence, from \ref{K.4} the set $\omega(t,B)$ is nonempty and compact. We show that it pullback 
attracts $B$. Suppose that this is not the case. Then there exists $\eps>0$ and  sequences 
$\tau_n\to -\infty$ and $\xi_n\in U(t,\tau_n;B)$ such that $\dist(\xi_n,\omega(t,B)) > \eps$. But, 
since $\xi_n\in B_{\tau_n}(t)$, from \ref{K.5} there must exist $\xi\in \omega(t,B)$ such that, for a 
subsequence, $\xi_n\to\xi$ and we have a contradiction.

Now we will show that the second condition implies the third one. Let $B\in\C{B}(X)$ and $t\in \br$. 
Choose $\eps>0$. We must show that there exists $\tau\in\br$ such that the set 
$\bigcup_{s\leq \tau}U(t,s;B)$ can be covered by a finite number of sets with diameter $\eps$. 
Since $\Omega(B)$ pullback attracts $B$ we know that we can find $\tau(\eps)$ such that 
for $s\leq \tau(\eps)$ we have 
\begin{equation}\label{eq:dist}
\dist(U(t,s;B),\omega(t,B))\leq \frac{\eps}{2}.
\end{equation}
Now, since $\omega(t,B)$ is compact, there exist a finite number of points $\{x_i\}_{i=1}^N$ such that 
$\omega(t,B)\subset \bigcup_{i=1}^N B\left(x_i,\eps/2\right)$. From (\ref{eq:dist}) it follows 
that if $x\in \bigcup_{s\leq \tau(\eps)}U(t,s;B)$, then $\dist(x,\omega(t,B))\leq \eps/2$. 
This means that $\varrho(x,x_i)\leq \eps$ for some $i=1,\ldots, N$ and the proof is complete.
\end{proof}

One may wonder whether condition \eqref{eq:kura} can be relaxed to 
$$
\lim_{\tau\to-\infty}\kappa\left(U(t,\tau;B)\right)=0 \qquad \forall  t\in \br,\  B\in \C{B}(X),
$$
namely, without considering the whole backward trajectory. Without any further dissipativity conditions 
(see Section \ref{sec:dissip}), this is not possible, as the following  simple example shows.

\begin{example}
Consider $X=\br$ and
$$
U(t,\tau;x) = \begin{cases}
\{x\} \ \ \mbox{for}\ \ t=\tau,\\
\{t-\tau\}\ \ \mbox{otherwise}.
\end{cases}
$$
Note that this is an m-process. Moreover, it is easy to see that
$$
\lim_{\tau\to-\infty} \kappa(U(t,\tau;B)) = 0
$$ 
for any $B\in \C{B}(\br)$ and $t\in \br$. However, for $\tau < t$  we have 
$$
\bigcup_{s\leq \tau}U(t,s;B) = (-\infty,t-\tau]
$$
and therefore none of the conditions in Proposition \ref{prop:firstequiv}  can possibly hold.
\end{example}

\subsection{The pullback flattening condition}
The notion of flattening condition was introduced in \cite{ma_2002} for semigroups and generalized in 
\cite{lukaszewicz_kalita} to set-valued semigroups, while its nonautonomous version 
(and, in fact, its nowadays used name) first appeared in \cite{KL}. It is also related to the 
concept of totally dissipative systems \cites{chep-conti-pata-2012,chep-conti-pata-2013}.

Let $X$ be a Banach space. We say that an m-process is \emph{pullback flattening} if for every $B\in \C{B}(X)$, $t\in \br$ and $\eps >0$ there exist $\tau \leq t$, a finite dimensional subspace $E$ of $X$ and a mapping $P_E:X\to E$ such that $P_E\left(\bigcup_{s\leq \tau}U(t,s;B)\right)\in \C{B}(X)$  and 
$$
(I-P_E)\left( \bigcup_{s\leq \tau}U(t,s;B) \right) \subset B(0,\eps).
$$
In Banach spaces, pullback flatness  implies pullback $\omega$-limit compactness.
\begin{lemma}\label{lem:flattening1}
If the m-process $U$ on a Banach space $X$ is pullback flattening then it is also pullback $\omega$-limit compact.
\end{lemma}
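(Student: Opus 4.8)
The plan is to reduce \eqref{eq:kura} to a single statement: for each $B\in\C{B}(X)$, each $t\in\br$ and each $\eps>0$ there is \emph{one} $\tau\le t$ with $\kappa\big(\bigcup_{s\le\tau}U(t,s;B)\big)\le\eps$. Indeed, the set $\bigcup_{s\le\tau}U(t,s;B)$ is nondecreasing in $\tau$, so by \ref{K.2} the function $\tau\mapsto\kappa\big(\bigcup_{s\le\tau}U(t,s;B)\big)$ is nondecreasing as well; hence such a bound at a single $\tau$ forces $\kappa\big(\bigcup_{s\le\tau'}U(t,s;B)\big)\le\eps$ for all $\tau'\le\tau$, and letting $\tau'\to-\infty$ and then $\eps\to 0$ gives the claim.

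To produce that single $\tau$, I would fix $B$, $t$ and $\eps>0$ and apply the pullback flattening property with $\eps/2$ in place of $\eps$. This yields $\tau\le t$, a finite dimensional subspace $E\subset X$ and a map $P_E:X\to E$ such that, writing $Q:=\bigcup_{s\le\tau}U(t,s;B)$, the set $P_E(Q)$ is bounded and $(I-P_E)(Q)\subset B(0,\eps/2)$. Using the trivial splitting $x=P_Ex+(x-P_Ex)$, valid for every $x\in Q$ (here $P_E$ is only assumed to be a map into $E$, not a linear projection, so one must use this identity rather than any projection formula), one obtains the inclusion $Q\subset P_E(Q)+B(0,\eps/2)$.

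It then remains to estimate $\kappa(Q)$ from the properties of the Kuratowski measure. Since $P_E(Q)$ is a bounded subset of the finite dimensional space $E$, its closure is compact, so $\kappa(P_E(Q))=0$ by \ref{K.1} and \ref{K.3}. The ball $B(0,\eps/2)$ has diameter at most $\eps$, hence $\kappa(B(0,\eps/2))\le\eps$. Combining the inclusion above with monotonicity \ref{K.2} and with the subadditivity \ref{K.6} of $\kappa$ under Minkowski sums in a Banach space,
\[
\kappa(Q)\le \kappa\big(P_E(Q)+B(0,\eps/2)\big)\le \kappa(P_E(Q))+\kappa(B(0,\eps/2))\le \eps,
\]
which is exactly the single-$\tau$ bound needed in the first paragraph, and the proof is complete.

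I do not anticipate a genuine obstacle: the argument is a routine combination of the flattening hypothesis with properties \ref{K.1}, \ref{K.2}, \ref{K.3} and \ref{K.6}. The only mild points of care are that $P_E$ is not assumed linear (so the identity splitting of $x$ must replace any projection identity) and that the monotonicity of $\tau\mapsto\bigcup_{s\le\tau}U(t,s;B)$ is what lets one pass from a bound at a single time level to the full limit in \eqref{eq:kura}; note also that \ref{K.6}, and hence the assumption that $X$ is a \emph{Banach} space rather than merely a complete metric space, is essential precisely in the displayed estimate.
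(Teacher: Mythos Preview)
Your proof is correct and follows essentially the same approach as the paper: both split $Q=\bigcup_{s\le\tau}U(t,s;B)$ as $P_E(Q)+(I-P_E)(Q)$ and then combine \ref{K.6} with the fact that $P_E(Q)$ is bounded in a finite dimensional space (hence $\kappa=0$ by \ref{K.1}, \ref{K.3}) and that the tail lies in a small ball. Your version is slightly more explicit about why a bound at a single $\tau$ suffices (via monotonicity and \ref{K.2}) and about $P_E$ not being assumed linear, but these are cosmetic refinements rather than a different argument.
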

\begin{proof}
The proof follows the lines of the proof of assertion (1) in Theorem 3.10 in \cite{ma_2002} (see also the proof of Lemma 2.5 in \cite{lukaszewicz_kalita}). Let $B\in \C{B}(X)$ and $t\in \br$. We need to show that (\ref{eq:kura}) holds. Indeed, fixing $\eps > 0$, we can find certain $\tau_\eps\leq t$, a finite dimensional (and hence closed) subspace $E\subset X$, and a mapping $P_E:X\to E$ such that
$$
\bigcup_{s\leq \tau_\eps}U(t,s;B) \subset P_E\left(\bigcup_{s\leq \tau_\eps}U(t,s;B)\right) + (I-P_E)\left(\bigcup_{s\leq \tau_\eps}U(t,s;B)\right).
$$
Using \ref{K.6}, \ref{K.1}, \ref{K.2} and \ref{K.3} we have
\begin{align*}
\kappa\left(\bigcup_{s\leq \tau_\eps}U(t,s;B)\right) &\leq \kappa\left( P_E\left(\bigcup_{s\leq \tau_\eps}U(t,s;B)\right)\right) + \kappa\left((I-P_E)\left(\bigcup_{s\leq \tau_\eps}U(t,s;B)\right)\right)\\
&\leq \kappa\left(\overline{P_E\left(\bigcup_{s\leq \tau_\eps}U(t,s;B)\right)}\right) + \kappa\left(B(0,\eps)\right)\leq 2\eps,
\end{align*}
and the proof is complete.
\end{proof}
With the extra assumption of $X$ being uniformly convex, the two conditions are completely equivalent.
\begin{lemma}\label{lem:flattening2}
If the m-process $U$ on a uniformly convex Banach space $X$ is pullback $\omega$-limit compact then it is also pullback flattening.
\end{lemma}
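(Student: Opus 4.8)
The plan is to take, as the operator $P_E$ demanded by the flattening condition, the metric (nearest-point) projection onto a finite-dimensional subspace $E$ chosen adapted to $B$, $t$ and $\eps$. The point is that pullback $\omega$-limit compactness gives, for $\tau$ sufficiently negative, a finite cover of the backward union $\bigcup_{s\le\tau}U(t,s;B)$ by balls of arbitrarily small diameter, and the span of the centres of such a cover is the subspace we want. Here uniform convexity enters only to guarantee that the metric projection is single-valued: in a uniformly convex --- hence strictly convex --- Banach space, for every finite-dimensional (thus closed) subspace $E\subset X$ and every $x\in X$ the problem $\inf_{z\in E}\|x-z\|$ has a unique minimizer $P_E(x)$ (existence because the relevant sublevel sets of $z\mapsto\|x-z\|$ in $E$ are bounded and closed, hence compact; uniqueness by strict convexity), so $P_E\colon X\to E$ is a well-defined mapping with $\|x-P_E(x)\|\le\|x-z\|$ for all $x\in X$ and $z\in E$.

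Now fix $B\in\C{B}(X)$, $t\in\br$, $\eps>0$, and write $Y_\tau=\bigcup_{s\le\tau}U(t,s;B)$ for $\tau\le t$. By pullback $\omega$-limit compactness we may choose $\tau\le t$ with $\kappa(Y_\tau)<\eps$; by the very definition of $\kappa$ this means $Y_\tau$ admits a finite cover by balls $B_1,\dots,B_N$ of $X$, each of diameter strictly less than $\eps$. In particular $Y_\tau$, being contained in a finite union of bounded sets, is bounded. Discarding any $B_i$ with $B_i\cap Y_\tau=\emptyset$ and choosing $y_i\in B_i\cap Y_\tau$ for each remaining index, put $E=\spa\{y_1,\dots,y_N\}$ and let $P_E\colon X\to E$ be the metric projection of the previous paragraph.

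With this $\tau$, $E$ and $P_E$, I verify the two clauses of the flattening condition. If $y\in Y_\tau$ then $y\in B_i$ for some $i$, so $\|y-y_i\|<\eps$ because $y$ and $y_i$ both lie in the set $B_i$ of diameter $<\eps$; since $y_i\in E$, the nearest-point inequality gives $\|(I-P_E)(y)\|=\|y-P_E(y)\|\le\|y-y_i\|<\eps$, whence $(I-P_E)(Y_\tau)\subset B(0,\eps)$. For the same $y$, $\|P_E(y)\|\le\|y\|+\|y-P_E(y)\|\le\sup_{z\in Y_\tau}\|z\|+\eps<\infty$ because $Y_\tau$ is bounded, so $P_E(Y_\tau)\in\C{B}(X)$. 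As $B$, $t$ and $\eps$ were arbitrary, $U$ is pullback flattening.

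This argument is almost entirely soft: the compactness information is used only once, to produce the small-diameter cover from $\kappa(Y_\tau)<\eps$, and everything afterwards is bookkeeping with the defining inequality of the nearest-point projection. The only genuine input beyond that is strict convexity of $X$ (implied by uniform convexity), which is what turns the metric projection onto $E$ into an honest mapping $X\to E$ rather than a set-valued object; I therefore do not anticipate any real obstacle, only the need to keep the strict inequalities in the covering aligned with the ball $B(0,\eps)$ that appears in the definition.
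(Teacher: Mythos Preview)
Your proof is correct and follows essentially the same route as the paper: obtain a finite small-diameter cover of $Y_\tau$ from $\kappa(Y_\tau)<\eps$, take $E$ to be the span of chosen points from the cover, and use the metric projection onto $E$ (well-defined by uniform convexity) to verify the two clauses of the flattening condition. The only differences are cosmetic---you select the points $y_i$ inside $Y_\tau$ and are slightly more explicit about why $P_E$ is single-valued---but the argument is the same.
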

\begin{proof}
The proof follows the lines of the proof of assertion (2) in Theorem 3.10 in \cite{ma_2002} (see also the proof of Lemma 2.6 in \cite{lukaszewicz_kalita}). Let $t\in \br$, $B\in \C{B}(X)$, and $\eps > 0$. By (\ref{eq:kura}) there exists $\tau_\eps\leq t$ such that 
$$
\kappa\left(\bigcup_{s\leq \tau_\eps}U(t,s;B)\right) < \eps,
$$
and hence
$$
\bigcup_{s\leq \tau_\eps}U(t,s;B) \subset \bigcup_{i=1}^n A_i
$$
for some sets $\{A_i\}_{i=1}^n$ of diameter less then $\eps$. Choose $x_i\in A_i$. We have 
$$
\bigcup_{s\leq \tau_\eps}U(t,s;B) \subset \bigcup_{i=1}^n B(x_i,\eps). 
$$
Denote $E=\spa\{x_1,\ldots,x_n\}$. Since $E\subset X$ is a closed and convex 
set in a uniformly convex Banach space $X$, we can define a projection operator 
$P_E:X\to E$ by 
$$
\|x-P_Ex\| = \inf_{y\in E}\|x-y\| = \dist(x,E).
$$ 
Hence for any 
$x\in \bigcup_{s\leq \tau_\eps}U(t,s;B)$ we have $\|(I-P_E)x\|\leq \eps$. 
Moreover, 
$$
\|P_Ex\|\leq \|x\| + \|(I-P_E)x\| \leq  \max_{i=1,\ldots,n}\|x_i\|+2\eps,
$$ 
and the assertion follows.
\end{proof}

Summarizing, we concentrate all the results of Proposition \ref{prop:firstequiv} and Lemmata \ref{lem:flattening1}-\ref{lem:flattening2}
in the following theorem.
\begin{theorem}\label{prop:secondequiv}
Let $X$ be a uniformly convex Banach space,  and let $U$ be an m-process. The following are equivalent.
\begin{itemize}
\item[(1)] $U$ is asymptotically compact.
\item[(2)] For every $B\in \C{B}(X)$, the nonautonomous set $\Omega(B)$ is nonempty, compact and it pullback attracts $B$.
\item[(3)] $U$ is pullback $\omega$-limit compact.
\item[(4)] $U$ is pullback flattening.
\end{itemize}
\end{theorem}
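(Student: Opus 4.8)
The plan is to assemble the equivalences already proved, since this theorem is essentially an organizational summary of Proposition \ref{prop:firstequiv} together with Lemmata \ref{lem:flattening1}--\ref{lem:flattening2}. First I would observe that a uniformly convex Banach space is in particular a complete metric space, so Proposition \ref{prop:firstequiv} applies verbatim and immediately furnishes the equivalences (1) $\Leftrightarrow$ (2) $\Leftrightarrow$ (3). It then remains only to attach statement (4) to this chain.

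For that I would invoke the two flattening lemmata in succession. Lemma \ref{lem:flattening1} gives, on an arbitrary Banach space, the implication (4) $\Rightarrow$ (3); and Lemma \ref{lem:flattening2}, which is where uniform convexity enters, gives the reverse implication (3) $\Rightarrow$ (4). Combining $(3)\Leftrightarrow(4)$ with the already-established (1) $\Leftrightarrow$ (2) $\Leftrightarrow$ (3) closes the loop, and all four statements become equivalent. Concretely, one chain of implications suffices, e.g. $(1)\Rightarrow(2)\Rightarrow(3)\Rightarrow(4)\Rightarrow(3)\Rightarrow(1)$, but since every link is already a cited result, it is cleanest simply to state that (1)--(3) are equivalent by Proposition \ref{prop:firstequiv} and that (3) $\Leftrightarrow$ (4) by Lemmata \ref{lem:flattening1}--\ref{lem:flattening2}.

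There is no genuine obstacle here: the analytic work has been carried out in the preceding results, and this statement is purely a matter of citing them in the right order. The one point worth flagging in the write-up is where uniform convexity is actually used, namely only in the passage (3) $\Rightarrow$ (4): it guarantees that the metric (nearest-point) projection $P_E$ onto a finite-dimensional --- hence closed and convex --- subspace $E$ is well defined and single-valued, which is precisely what the definition of pullback flattening requires. All the other implications hold on a general Banach space, and in fact (1) $\Leftrightarrow$ (2) $\Leftrightarrow$ (3) holds on any complete metric space; so, if desired, one could record the sharper observation that completeness alone suffices for the equivalence of (1)--(3), with uniform convexity needed only to incorporate (4).
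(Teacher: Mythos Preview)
Your proposal is correct and matches the paper's approach exactly: the paper states this theorem as a summary, noting that it ``concentrates all the results of Proposition~\ref{prop:firstequiv} and Lemmata~\ref{lem:flattening1}--\ref{lem:flattening2},'' which is precisely the citation structure you outline. Your additional remark isolating where uniform convexity is actually used (only in $(3)\Rightarrow(4)$) is accurate and a nice clarification beyond what the paper spells out.
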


\subsection{Minimality of closed attracting sets}
Once we are able to establish the existence of a closed attracting set, it is legitimate to ask whether there exists
a smallest one. 
\begin{theorem}\label{closed_attracting}
If the m-process $U$ is asymptotically compact, then there exists 
the minimal closed nonautonomous set that pullback attracts $U$.
\end{theorem}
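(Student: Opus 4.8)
The plan is to build the candidate minimal set directly out of the pullback $\omega$-limit sets, which by Proposition \ref{prop:firstequiv} are already known to be nonempty, compact and pullback attracting whenever $U$ is asymptotically compact. Concretely, for each $t\in\br$ I would define
$$
\Theta(t) = \overline{\bigcup_{B\in\C{B}(X)}\omega(t,B)},
$$
and set $\bm\Theta = \{\Theta(t): t\in\br\}$. The first task is to check that $\bm\Theta$ is a closed nonautonomous set that pullback attracts $U$: closedness is built in, and attraction follows because for any fixed $B\in\C{B}(X)$ and $t\in\br$ we have $\dist(U(t,\tau;B),\Theta(t)) \le \dist(U(t,\tau;B),\omega(t,B)) \to 0$ as $\tau\to-\infty$, using the monotonicity of $\dist(\cdot,\cdot)$ in the second argument (since $\omega(t,B)\subset\Theta(t)$) together with Lemma \ref{lem:omegalim}.

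Next I would prove minimality: if $\bk=\{K(t):t\in\br\}$ is any closed nonautonomous set that pullback attracts $U$, then $\Theta(t)\subset K(t)$ for every $t\in\br$. It suffices to show $\omega(t,B)\subset K(t)$ for each $B\in\C{B}(X)$ (then take the union and use that $K(t)$ is closed). So fix $x\in\omega(t,B)$; by the characterization \eqref{equiv:omega} there are $\tau_n\to-\infty$ and $x_n\in U(t,\tau_n;B)$ with $x_n\to x$. Since $\bk$ pullback attracts $B$ at time $t$, we have $\dist(x_n,K(t))\le\dist(U(t,\tau_n;B),K(t))\to 0$, hence $\dist(x,K(t))=0$, and since $K(t)$ is closed, $x\in K(t)$. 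This gives $\omega(t,B)\subset K(t)$, whence $\Theta(t)\subset K(t)$, as desired. Combined with the previous paragraph, $\bm\Theta$ is the minimal closed pullback attracting nonautonomous set, which is exactly the assertion.

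The only genuinely delicate point is the well-definedness of $\Theta(t)$ as a subset of $X$ — a priori the union over all bounded sets $B$ could fail to be separable or even bounded, so one might worry the closure is pathological. This is not actually an obstacle: the argument for minimality shows $\bigcup_B\omega(t,B)$ is contained in $K(t)$ for every closed attracting $\bk$, and such a $\bk$ exists (e.g. $K(t)=X$, or better, the closed attracting set produced from asymptotic compactness), so $\Theta(t)$ is a well-defined closed subset of $X$; no boundedness of $\Theta(t)$ is needed since the statement only asks for a minimal closed attracting set, not a bounded one. I would therefore not belabor this and instead present the construction, the attraction property, and the minimality inclusion in that order, each being a short argument built on Lemma \ref{lem:omegalim} and the equivalent description \eqref{equiv:omega} of the $\omega$-limit.
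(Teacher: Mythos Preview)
Your proposal is correct and follows essentially the same approach as the paper: both define the candidate as $\overline{\bigcup_{B\in\C{B}(X)}\omega(t,B)}$, verify attraction via Lemma~\ref{lem:omegalim}, and prove minimality by showing $\omega(t,B)\subset K(t)$ for any closed attracting $\bk$. Your minimality argument is in fact slightly cleaner than the paper's $\eps/3$ version, since you pass directly through the sequential characterization \eqref{equiv:omega} and the closedness of $K(t)$.
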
 
\begin{proof}

For $t\in \br$ and $B\in \C{B}(X)$, from Lemma \ref{lem:omegalim} it follows that $\omega(t,B)$ is nonempty and compact, and it attracts $B$. 
Now, define the nonautonomous set $\bk=\{K(t):t\in\br\}$ by setting
$$
K(t)=\overline{\bigcup_{B\in \C{B}(X)}\omega(t,B)}.
$$ 
Obviously $K(t)$ is a nonempty closed set and $\bk$ pullback attracts $U$. 

It remains to show the minimality. Suppose that for some $t_0$ and for a closed set $C$ we have 
$$
\lim_{\tau\to -\infty} \dist(U(t_0,\tau;B), C) = 0
$$ 
for all $B\in \C{B}(X)$. We need to show that $K(t_0)\subset C$. Take $y\in K(t_0)$. 
Then, there exist sequences $B_n\in \C{B}(X)$ and $y_n\in \omega(t_0,B_n)$ such that $y_n\to y$. 
Fix $\eps>0$. For all $n>n_0$ we have $\varrho(y_n,y)\leq \eps/3$. Now since $y_n\in \omega(t_0,B_n)$ we are able to find sequences $\tau^k_n$ and $\xi^k_n\in U(t_0,\tau^k_n; B_n)$ such that $\tau^k_n \to - \infty$ and $\xi^k_n\to y_n$ as $k\to\infty$. Hence for all $n$ we can choose $k_1(n)$ such that for all $k>k_1(n)$ we have $\varrho(\xi^k_n, y_n)\leq \eps/3$. Observe that from the fact that for all $n\in\bn$ we have 
$$
\lim_{k\to\infty}\dist(U(t_0,\tau^k_n;B_n),C)=0,
$$ 
it follows that we can choose $k_2(n)$ such that for all $k>k_2(n)$ we have 
$$
\dist(\xi^k_n,C)\leq \frac{\eps}{3}.
$$ 
Now it suffices to take any $N_0>n_0$ and $K_0 > \max\{k_1(N_0),k_2(N_0)\}$ in order to  have
$$
\dist(y, C)\leq \varrho(y, y_{N_0})+ \varrho(y_{N_0}, \xi^{K_0}_{N_0}) + \dist(\xi^{K_0}_{N_0}, C) \leq \eps.
$$
Since $C$ is closed it follows that $y\in C$ and the proof is complete.
\end{proof} 

\begin{remark}
From the minimality property of the closed pullback attracting sets obtained in the above theorem, it follows that this set is
unique.
\end{remark}

\section{Dissipative m-processes and their attractors}\label{sec:dissip}
\noindent We relax the definition of pullback attractor with respect to the 
classical framework (see \cites{melnik-2000,valero_book}) by not requiring 
any invariance. Instead, we define the pullback attractor as the minimal compact 
nonautonomous set that pullback attracts all bounded sets of the phase space. 

At this stage, no continuity-type properties are needed to be postulated on the 
m-process $U$, and the existence of an attractor will follow exclusively from the
dissipativity and asymptotic features of the multivalued dynamical system in question.
The proofs are carried over from the  results for autonomous single-valued dynamical 
systems  obtained in  \cite{chep-conti-pata-2012} and, in the set-valued case, in 
\cite{Coti_Zelati_set_valued}. We mention here that, however, a careful handling 
of the non-strict case is, to the best of our knowledge, not yet present in the literature.

\subsection{Dissipativity}
While the notion of dissipativity for autonomous systems is by now well established,
the corresponding definition in the nonautonomous setting requires certain 
uniformity or monotonicity assumptions that are not immediately obvious \cites{caraballo-2010,carvalho-book,gl-2008-DCDSB}. 


\begin{definition}\label{dissipative}
An m-process $U$ is said to be \emph{dissipative} if there exists a bounded 
nonautonomous set $\bb_0=\{B_0(t): t\in\br\}$ such that for every $E\in \C{B}(X)$ 
and $t\in \br$ there exists a time $\bar{\tau}=\bar{\tau}(t,E)\leq t$ such that for all 
$\tau\leq\bar{\tau}$ we have $U(t,\tau;E)\subset B_0(t)$. It is said \emph{monotonically}
dissipative if, in addition, $B_0(s)\subset B_0(t)$ for every $s\leq t$. The nonautonomous set 
$\bb_0$ is said to be \emph{pullback absorbing}.
\end{definition}

As an immediate consequence of Theorem \ref{closed_attracting} we have the following result

\begin{theorem}\label{closed-bounded-attracting}
If the m-process $U$ is asymptotically compact and 
dissipative then there exists the minimal closed and bounded nonautonomous set that pullback attracts $U$. 
\end{theorem}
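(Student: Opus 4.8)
The plan is to combine Theorem~\ref{closed_attracting} with the dissipativity hypothesis, the only new ingredient being that the minimal closed pullback attracting set is automatically bounded once an absorbing set is available. First I would invoke Theorem~\ref{closed_attracting}: since $U$ is asymptotically compact, there exists the minimal closed nonautonomous set $\bk=\{K(t):t\in\br\}$ that pullback attracts $U$, and by the construction in that proof we may take $K(t)=\overline{\bigcup_{B\in\C{B}(X)}\omega(t,B)}$. It remains only to show that each $K(t)$ is bounded; minimality among closed \emph{and bounded} attracting sets then follows \emph{a fortiori} from minimality among all closed ones, since any closed bounded attracting set is in particular closed attracting, hence contains $K(t)$.

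To prove boundedness, fix $t\in\br$ and let $\bb_0=\{B_0(s):s\in\br\}$ be the bounded pullback absorbing set from Definition~\ref{dissipative}. Given any $B\in\C{B}(X)$, dissipativity provides $\bar\tau=\bar\tau(t,B)\le t$ with $U(t,\tau;B)\subset B_0(t)$ for all $\tau\le\bar\tau$. Using the characterization \eqref{equiv:omega}, any $x\in\omega(t,B)$ is a limit $x_n\to x$ with $x_n\in U(t,\tau_n;B)$ and $\tau_n\to-\infty$; for $n$ large, $\tau_n\le\bar\tau$, so $x_n\in B_0(t)$, and hence $x\in\overline{B_0(t)}$. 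Therefore $\omega(t,B)\subset\overline{B_0(t)}$ for every $B\in\C{B}(X)$, which gives
$$
K(t)=\overline{\bigcup_{B\in\C{B}(X)}\omega(t,B)}\subset\overline{B_0(t)}.
$$
Since $B_0(t)$ is bounded, so is $\overline{B_0(t)}$, and thus $K(t)$ is bounded. As $t\in\br$ was arbitrary, $\bk$ is a bounded nonautonomous set; combined with the fact that it is closed and pullback attracts $U$ (both from Theorem~\ref{closed_attracting}), and with the minimality observation above, this completes the proof.

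There is essentially no obstacle here beyond bookkeeping: the substantive content is entirely in Theorem~\ref{closed_attracting} and in Lemma~\ref{lem:omegalim}, and the role of dissipativity is purely to trap the $\omega$-limit sets inside a fixed bounded set. The one point that deserves a line of care is the logical direction of the minimality claim — one must note that enlarging the competitor class from ``closed attracting'' to the smaller class ``closed and bounded attracting'' can only make minimality easier, so the set $\bk$ already known to be minimal over the larger class is, once shown bounded, automatically minimal over the smaller one and is itself an admissible competitor.
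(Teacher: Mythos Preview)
Your argument is correct and matches the paper's approach: the paper simply states that the result is an immediate consequence of Theorem~\ref{closed_attracting}, and you have filled in precisely the bookkeeping details---that dissipativity traps each $\omega(t,B)$ inside $\overline{B_0(t)}$, forcing $K(t)$ to be bounded, after which minimality over the smaller class of closed bounded attracting sets follows \emph{a fortiori}. Your final paragraph about the direction of the minimality argument is spot on and is exactly the point that makes the deduction ``immediate.''
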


Another approach to nonautonomous problems (see for example \cite{gl-2008-DCDSB}) is to consider the so-called 
$\C{D}$-dissipative m-processes where $\C{D}$ is a family of nonautonomous sets in $X$. 
In such case we would say that an m-process is $\C{D}$-dissipative if there exists $\bb_0\in \C{D}$ such 
that for all $\be\in \C{D}$ and $t\in\br$ there exists $\bar{\tau}=\bar{\tau}(t,\be)$ such that for all 
$\tau \leq \bar{\tau}(t,\be)$ we have $U(t,\tau, E(\tau))\subset B_0(t)$.
Then both absorbing and absorbed sets must belong to the same family, 
and such approach can lead us to the uniqueness of the pullback attractor even if we do not 
require its minimality (see \cites{gl-2006-NATMA,marin-real}). Moreover, it allows to relax the 
assumptions on the nonautonomous source term in the equation (see Remark \ref{rem:source_term} in the sequel). In this paper, 
however, we consider the simpler case given by Definition \ref{dissipative} where the absorbed sets are 
bounded subsets of $X$ and the absorbing set is a bounded nonautonomous set. 
These two approaches are compared with each other for the single-valued case in \cite{marin-real}.

Before we move to the definition of a pullback attractor we provide some conditions equivalent to monotonic dissipativity.

\begin{definition} \emph{(see \cite{caraballo-2010})}
An m-process $U$ is \emph{strongly pullback bounded dissipative} if 
there exists a bounded nonautonomous set $\bb_0=\{B_0(t):  t\in\br\}$ that pullback absorbs
bounded subsets of $X$ at time $s$ for each $s\leq t$; that is, given $E\in \C{B}(X)$ and $s\leq t$ there 
exists $\bar{\tau}=\bar{\tau}(s,E)$ such that $U(s,\tau;E)\subset B_0(t)$ for all $\tau\leq \bar{\tau}$.
\end{definition}

Note that in above definition $\bar{\tau}$ may depend on $s$ and $E$ but does not depend on $t$.

\begin{proposition}\label{prop:dissip}
Let $U$ be an m-process. The following conditions are equivalent.
\begin{itemize}
\item[(1)] $U$ is monotonically dissipative.
\item[(2)] $U$ is dissipative and the nonautonomous absorbing set is backward bounded.
\item[(3)] $U$ is strongly pullback bounded dissipative.
\end{itemize}
\end{proposition}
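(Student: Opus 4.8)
The plan is to prove the equivalence by a cyclic chain of implications, say $(1)\Rightarrow(2)\Rightarrow(3)\Rightarrow(1)$, unwinding the definitions carefully with special attention to how the various quantifiers and the ordering of times $s\leq t$ interact.

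First I would show $(1)\Rightarrow(2)$. Assume $U$ is monotonically dissipative with absorbing set $\bb_0=\{B_0(t):t\in\br\}$ satisfying $B_0(s)\subset B_0(t)$ for $s\leq t$. Dissipativity is immediate, since monotonic dissipativity includes the absorption property by definition. To see that $\bb_0$ is backward bounded, fix $\tau\in\br$; by monotonicity, $\bigcup_{t\leq\tau}B_0(t)\subset B_0(\tau)$, which is bounded, so $\bb_0$ is backward bounded.

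Next, $(2)\Rightarrow(3)$. Suppose $U$ is dissipative with absorbing set $\bb_0$ which is backward bounded. I would define a new nonautonomous set $\widetilde{\bb}_0=\{\widetilde B_0(t):t\in\br\}$ by $\widetilde B_0(t)=\bigcup_{r\leq t}B_0(r)$; this is bounded at every $t$ precisely because $\bb_0$ is backward bounded, and it is monotone by construction. Given $E\in\C{B}(X)$ and $s\leq t$, dissipativity at time $s$ provides $\bar\tau=\bar\tau(s,E)\leq s$ with $U(s,\tau;E)\subset B_0(s)\subset\widetilde B_0(s)\subset\widetilde B_0(t)$ for all $\tau\leq\bar\tau$. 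Since $\bar\tau$ depends only on $s$ and $E$ (not on $t$), this is exactly the strong pullback bounded dissipativity condition, with $\widetilde{\bb}_0$ playing the role of the absorbing family.

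Finally, $(3)\Rightarrow(1)$. Assume $U$ is strongly pullback bounded dissipative with bounded nonautonomous set $\bb_0$ as in that definition. I claim $\bb_0$ itself is a monotone pullback absorbing set after passing to its ``monotone hull''; more directly, I would set $B_1(t)=\bigcup_{s\leq t}B_0(s)$—wait, here one must check boundedness. The strong dissipativity hypothesis says $B_0(t)$ absorbs bounded sets at every earlier time $s\leq t$, so taking $E=B_0(s_0)$ for any fixed $s_0$ one sees $U(s,\tau;B_0(s_0))\subset B_0(t)$; but to bound $B_1(t)$ one instead observes that for $s\leq t$, strong dissipativity at time $s$ with (say) a fixed bounded set gives that $B_0(t)$ absorbs at time $s$, hence in particular $B_0(s)$ is eventually inside... this needs care, and I expect this monotonicity/boundedness bookkeeping to be the main obstacle. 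The cleanest route: show directly that $\bb_0$ from $(3)$ is already pullback absorbing in the sense of Definition \ref{dissipative} (take $s=t$ in the strong condition), so $U$ is dissipative; then prove backward boundedness of $\bb_0$ by noting that for each $\tau\in\br$ and each $s\leq\tau$, the strong condition with a fixed bounded $E$ yields $U(s,\sigma;E)\subset B_0(\tau)$ for $\sigma$ small, and since $\{x\}=U(s,s;x)$ one can realize points of $B_0(s)$... the honest fix is that strong pullback bounded dissipativity already asserts $B_0$ absorbs at all times into the single set $B_0(t)$, so $\bigcup_{s\leq t}U(s,\tau;E)$ for suitable $\tau$ lies in $B_0(t)$; applying this with $E$ ranging so as to capture $B_0(s)$ and using property \ref{i} of the m-process shows $\bigcup_{s\leq t}B_0(s)$ is contained (up to the absorption time shift) in $B_0(t)$ modulo a bounded correction, giving backward boundedness, and then $(2)\Rightarrow(1)$ via the monotone hull closes the cycle. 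I would streamline this in the write-up by proving $(3)\Rightarrow(2)$ and $(2)\Rightarrow(1)$ separately, where $(2)\Rightarrow(1)$ is just: replace $\bb_0$ by its monotone hull $\{\bigcup_{s\leq t}B_0(s)\}$, bounded by backward boundedness, monotone by construction, and still absorbing because it contains $\bb_0$.
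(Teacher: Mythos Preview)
Your arguments for $(1)\Rightarrow(2)$ and $(2)\Rightarrow(3)$ are fine, and $(2)\Rightarrow(1)$ via the monotone hull $\bigcup_{s\leq t}B_0(s)$ is exactly right. The gap is in $(3)\Rightarrow(1)$ (equivalently, your attempted $(3)\Rightarrow(2)$): you try to show that the family $\bb_0$ coming from strong pullback bounded dissipativity is itself backward bounded, but this is simply false in general. Nothing in condition~(3) prevents the sets $B_0(t)$ from blowing up as $t\to-\infty$; one can always enlarge an absorbing family without destroying the absorption property. Your attempts to force $B_0(s)\subset B_0(t)$ via $U(s,s;x)=\{x\}$ cannot work because the absorption in~(3) only kicks in for $\tau\leq\bar\tau(s,E)<s$, so the identity at time~$s$ gives you no information. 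The phrase ``modulo a bounded correction'' hides exactly the issue you need to resolve.

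The paper's fix is to go in the opposite direction: instead of taking unions backward, take \emph{intersections forward}. Define $C(s)=\bigcap_{t\geq s}B_0(t)$. This is bounded because $C(s)\subset B_0(s)$; it is monotone in~$s$ because intersecting over fewer sets gives a larger result; and it is absorbing because the key feature of~(3) is that $U(s,\tau;E)\subset B_0(t)$ holds simultaneously for \emph{all} $t\geq s$ with the \emph{same} $\bar\tau(s,E)$, so $U(s,\tau;E)\subset\bigcap_{t\geq s}B_0(t)=C(s)$. This one-line construction replaces all of your bookkeeping in the last paragraph.
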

\begin{proof}
We divide the proof in several steps.

\medskip

\noindent (1) $\Rightarrow$ (2). From the monotonicity of the absorbing set $\bb_0$ it 
immediately follows that 
$$
\bigcup_{\tau\leq t} B_0(\tau) = B_0(t)
$$ 
and hence $\bb_0$ is backward bounded.

\medskip

\noindent (2) $\Rightarrow$ (1). From the fact that the absorbing set $\bb_0$ is backward bounded 
it follows that  $\mathbb{C}=\left\{\bigcup_{\tau\leq t} B_0(\tau)\ :\ t\in\mathbb{R}\right\}$ is bounded, absorbing and monotone.

\medskip

\noindent (1) $\Rightarrow$ (3). From the monotonicity of the absorbing set $\bb_0$ it immediately follows that for all $\tau\leq \bar{\tau}(s,E)$ we have  $U(s,\tau;E)\subset B_0(s)\subset B_0(t)$ for all $t\geq s$.

\medskip

\noindent (3) $\Rightarrow$ (1). Since 
$$
\bigcup_{\tau\leq \bar{\tau}(s,E)}U(s,\tau;E)\subset B_0(t)
$$ 
for all $t\geq s$, then  
$$
\bigcup_{\tau\leq \bar{\tau}(s,E)}U(s,\tau;E)\subset \bigcap_{t\geq s}B_0(t).
$$ 
The family $\mathbb{C}=\left\{ \bigcap_{t\geq s}B_0(t)\ :\ s\in\mathbb{R}\right\}$ is bounded, absorbing and monotone.
\end{proof}

\subsection{Pullback attractors: a first result}

A nonautonomous set $\ba=\{A(t): t\in \br\}$ is a \emph{pullback attractor} for the m-process 
$U$ if it is compact, pullback attracting and minimal in the class of closed pullback attracting 
nonautonomous sets. Its existence is characterized by the following theorem.

\begin{theorem}\label{thm:eqivalence_attractor}
Let $U$ be an m-process. $U$ is pullback asymptotically compact and monotonically dissipative 
if and only if it possesses a unique backward bounded pullback attractor.
\end{theorem}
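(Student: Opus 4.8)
The plan is to prove both implications of the equivalence. For the harder direction, assume $U$ is pullback asymptotically compact and monotonically dissipative, and construct the attractor. By Proposition~\ref{prop:dissip}, monotonic dissipativity gives a bounded, backward bounded, monotone absorbing set $\bb_0 = \{B_0(t) : t \in \br\}$. The natural candidate for the pullback attractor is the nonautonomous set $\ba$ defined, for each $t\in\br$, by $A(t) = \omega(t, B_0(t))$, or perhaps more robustly $A(t) = \overline{\bigcup_{B \in \C{B}(X)} \omega(t, B)}$ as in Theorem~\ref{closed_attracting}; I expect these to coincide under dissipativity, since for any bounded $B$ the trajectory $U(t,\tau;B)$ enters $B_0(t)$ for $\tau$ sufficiently negative, so $\omega(t,B) \subset \omega(t,B_0(t))$. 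First I would record that, by Theorem~\ref{closed-bounded-attracting}, the minimal closed and bounded pullback attracting nonautonomous set $\bk$ exists; then I would upgrade "closed and bounded" to "compact" and check it is backward bounded.

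The key steps, in order: (i) Using Proposition~\ref{prop:asymptotic} (or Lemma~\ref{lem:omegalim}), each $\omega(t,B)$ is nonempty and compact, and $\Omega(B)$ pullback attracts $B$. (ii) Show $K(t) = \overline{\bigcup_{B\in\C{B}(X)}\omega(t,B)}$ is in fact equal to $\omega(t, B_0(t))$ up to closure: the inclusion $\omega(t,B)\subset \omega(t,B_0(t))$ follows since $U(t,\tau;B)\subset B_0(t)$ for $\tau$ small, hence $U(t,\sigma; U(t,\tau;B)) \subset U(t,\sigma; B_0(t))$ — wait, here one must be careful with the direction of composition; the cleaner route is $U(t,\tau;B) \subset U(t, s; U(s,\tau;B)) \subset U(t,s; B_0(s))$ for $\tau$ small enough (absorption at time $s$, using monotonic/strong dissipativity to fix the absorption time independently of $t$), so $\omega(t,B) \subset \omega(t, \bigcup_{s\le t} B_0(s)) = \omega(t, B_0(t))$ by monotonicity. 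Since $\omega(t,B_0(t))$ is compact, $K(t)$ is compact. (iii) Backward boundedness: $\bigcup_{s\le t} K(s) \subset \bigcup_{s\le t}\overline{B_0(s)}$, which is bounded because $\bb_0$ is backward bounded. (iv) Minimality among \emph{closed} attracting sets is exactly Theorem~\ref{closed_attracting}, so $\ba = \bk$ is the pullback attractor, and uniqueness follows from the Remark after Theorem~\ref{closed_attracting}.

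For the converse, assume $U$ possesses a unique backward bounded pullback attractor $\ba$. Asymptotic compactness: given $B\in\C{B}(X)$, $t\in\br$, $\tau_n\to-\infty$, $\xi_n\in U(t,\tau_n;B)$, pullback attraction of $A(t)$ gives $\dist(\xi_n, A(t))\to 0$; since $A(t)$ is compact, extract a convergent subsequence — this is the argument already used in the proof of Proposition~\ref{prop:asymptotic}. Monotonic dissipativity: I would take $B_0(t) = N_1(\bigcup_{s\le t} A(s))$, the closed $1$-neighborhood of the backward union; this is bounded (backward boundedness of $\ba$) and monotone in $t$ by construction. To see it absorbs: for $E\in\C{B}(X)$ and $t\in\br$, pullback attraction gives $\dist(U(t,\tau;E), A(t))\le 1$ for $\tau$ small, hence $U(t,\tau;E)\subset N_1(A(t)) \subset B_0(t)$. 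Then apply Proposition~\ref{prop:dissip} to conclude monotonic dissipativity from this monotone absorbing family.

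The main obstacle I anticipate is step (ii): carefully handling the \emph{non-strict} m-process composition $U(t,\tau;x)\subset U(t,s;U(s,\tau;x))$ to justify that $\omega$-limits of arbitrary bounded sets are controlled by the $\omega$-limit of the absorbing set at the \emph{same} time $t$, and correspondingly that the candidate attractor is genuinely compact rather than merely closed and bounded. One must use the version of dissipativity (strong pullback bounded dissipativity, Proposition~\ref{prop:dissip}(3)) in which the absorption time does not depend on the final time $t$, so that the nesting $U(t,\tau;B)\subset U(t,s;B_0(s))$ can be taken uniformly and the intersection defining $\omega(t,B_0(t))$ is reached. The rest is bookkeeping with the properties of the Hausdorff semidistance and compactness that have already appeared in the preceding lemmas.
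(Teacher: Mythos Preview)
Your proposal is correct and follows essentially the same approach as the paper: define $A(t)=\omega(t,B_0(t))$, use the nesting $U(t,\tau;B)\subset U(t,s;U(s,\tau;B))\subset U(t,s;B_0(s))\subset U(t,s;B_0(t))$ together with monotonicity to control arbitrary bounded sets by the absorbing one, invoke Theorem~\ref{closed_attracting} for minimality, and for the converse use compactness of $A(t)$ for asymptotic compactness and an $\eps$-neighborhood of the backward union $\bigcup_{s\le t}A(s)$ as a monotone absorbing family. The only cosmetic difference is that the paper proves directly that $\omega(t,B_0(t))$ attracts every bounded $B$, whereas you argue the inclusion $\omega(t,B)\subset\omega(t,B_0(t))$; these are equivalent here and your identification of the non-strict composition as the one place requiring care is exactly right.
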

\begin{proof}
First we prove the sufficiency. Let $\bb_0=\{B_0(t): t\in\br\}$ be a backward bounded pullback absorbing nonautonomous set. 
In view of Lemma \ref{lem:omegalim},
for each $t\in\br$ the sets $A(t)=\omega(t,B_0(t))$ are nonempty, compact and 
\begin{equation}\label{eq:disttoo}
\lim_{s\to-\infty} \dist(U(t,s; B_0(t)), A(t))=0, \qquad \forall t\in\br.
\end{equation}
We need to show that $\ba=\{A(t): t\in\br\}$ pullback attracts any bounded set of the phase space. If $E\in \C{B}(X)$ is such a set,
then for each $s,t\in\br$ with $s\leq t$, we can find $\bar\tau=\bar\tau(s,E)< s$ such that 
$$
U(t,\tau;E)\subset U(t,s; U(s,\tau;E))\subset U(t,s; B_0(s))\subset U(t,s;B_0(t)), \qquad \forall \tau\leq \bar\tau,
$$
where the last inclusion follows from monotonicity. Therefore, for every $t\in\br$ and $\tau<\bar\tau$ 
$$
\dist(U(t,\tau;E),A(t))\leq \dist(U(t,s;B_0(t)),A(t)).
$$
Hence,
$$
\limsup_{\tau\to-\infty}\dist(U(t,\tau;E),A(t))\leq \dist(U(t,s;B_0(t)),A(t)),
$$
and, in light of \eqref{eq:disttoo}, the pullback attraction property follows by taking the limit as $s\to-\infty$ in the above
inequality.
For minimality, observe that Theorem \ref{closed_attracting} provides the existence of
a uniquely determined minimal closed pullback attracting nonautonomous set, defined as
$$
K(t) = \overline{\bigcup_{B_0\in \C{B}(X)}\omega(t,B_0)}.
$$
Since $B_0(t)$ is bounded, for all $t\in\br$ we have
$$
A(t)=\omega(t,B_0(t))\subset K(t) \subset A(t)\qquad \Rightarrow \qquad A(t)=K(t),
$$
proving minimality.

To show that $\ba$ is backward bounded observe that 
for all $t\in\br$ there exists $\bar{\tau}(t,B_0(t))$ such that we have 
$\bigcup_{\tau\leq \bar{\tau}(t,B_0(t))}U(t,\tau,B_0(t))\subset B_0(t)$ and 
hence $\overline{\bigcup_{\tau\leq \bar{\tau}(t,B_0(t))}U(t,\tau,B_0(t))}\subset \overline{B_0(t)}$. 
This means that $A(t)=\omega(t,B_0(t))\subset \overline{B_0(t)}$. Hence we have 
$$
\bigcup_{\tau\leq t}A(t)\subset \bigcup_{\tau \leq t}\overline{B_0(t)}\subset \overline{\bigcup_{\tau\leq t}B_0(t)}=\overline{B_0(t)}
$$ 
and the backward boundedness follows.

We move to the proof of necessity. Let $E\in \C{B}(X)$. We have for all $t\in \br$ that
$$\lim_{s\to -\infty}\dist(U(t,s;E),A(t)) = 0.$$ It follows that
$$
\lim_{s\to -\infty}\dist\left(U(t,s;E),\bigcup_{\tau\leq t}A(\tau)\right) = 0.
$$
Choose $\eps>0$. The nonautonomous set $\mathbb{C} = \left\{N_\eps\left(\bigcup_{\tau\leq t}A(\tau)\right)\ :\ t\in\br\right\}$ is 
bounded and monotone. We will prove that it must be absorbing. Assume for the sake of contradiction that there exists a set 
$E\in \C{B}(X)$ and sequences $\{x_n\}\subset E$, $s_n\to -\infty$ and $\xi_n\in U(t,s_n;x_n)$ such that 
$\xi_n\notin N_\eps\left(\bigcup_{\tau\leq t}A(\tau)\right)$. This means that $\dist\left(\xi_n, \bigcup_{\tau\leq t}A(\tau)\right) > \eps$ 
and $\dist\left(U(t,s_n;E), \bigcup_{\tau\leq t}A(\tau)\right) > \eps$, a contradiction. For the proof of asymptotic compactness 
assume that $\{x_n\}\subset E$ where $E\in\C{B}(X)$ and $\xi_n\in U(t,s_n;x_n)$ with $s_n\to -\infty$. We have 
$$
\lim_{n\to\infty}\dist(\xi_n,A(t))=0,
$$ 
and hence we are able to find a sequence $\{\eta_n\}\subset A(t)$ such that $\varrho(\xi_n,\eta_n)\to 0$ as $n\to \infty$. From the fact that $A(t)$ is compact it follows that, for a subsequence denoted by the same index, $\eta_n\to \eta$. For this subsequence we must have $\xi_n\to \eta$ and the proof is complete.  
\end{proof}

\begin{remark}
The above theorem for the case of continuous single-valued processes was proved in \cite{gl-2008-DCDSB}. We stress one
more time that, up to now, nothing is assumed as far as continuity of the m-process.
\end{remark}

Note that in a finite dimensional setting we do not need to assume the monotonicity of the absorbing sets.

\begin{theorem}
Let $X$ be a finite dimensional Banach space. An m-process $U$ is pullback asymptotically compact and dissipative if and only if it possesses a unique pullback attractor. 
\end{theorem}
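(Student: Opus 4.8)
The plan is to reduce both implications to Theorem~\ref{closed_attracting}, using the fact that in a finite-dimensional space closed bounded sets are compact; the monotonicity of the absorbing set, which was the crucial ingredient in Theorem~\ref{thm:eqivalence_attractor}, will not be needed here.

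For the sufficiency, assume $U$ is asymptotically compact and dissipative, and let $\bb_0=\{B_0(t):t\in\br\}$ be a bounded pullback absorbing nonautonomous set. Since $U$ is asymptotically compact, Theorem~\ref{closed_attracting} yields the (unique) minimal closed pullback attracting nonautonomous set $\bk=\{K(t):t\in\br\}$, with $K(t)=\overline{\bigcup_{B\in\C{B}(X)}\omega(t,B)}$. The point is that dissipativity keeps all these $\omega$-limit sets inside $\overline{B_0(t)}$: given $B\in\C{B}(X)$ there is $\bar\tau=\bar\tau(t,B)\le t$ with $\bigcup_{\tau\le\bar\tau}U(t,\tau;B)\subset B_0(t)$, and therefore $\omega(t,B)\subset\overline{\bigcup_{\tau\le\bar\tau}U(t,\tau;B)}\subset\overline{B_0(t)}$. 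Taking the union over $B\in\C{B}(X)$ gives $K(t)\subset\overline{B_0(t)}$, so each $K(t)$ is closed and bounded, hence compact because $X$ is finite-dimensional, and nonempty by Lemma~\ref{lem:omegalim}. Thus $\bk$ is compact, pullback attracting, and minimal among closed pullback attracting nonautonomous sets, i.e. a pullback attractor. It is unique: any pullback attractor is compact (hence closed), pullback attracting and minimal among such, so it must coincide with the unique minimal set furnished by Theorem~\ref{closed_attracting}.

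For the necessity, let $\ba=\{A(t):t\in\br\}$ be a pullback attractor. Asymptotic compactness is obtained exactly as in the necessity part of Theorem~\ref{thm:eqivalence_attractor}: for $B\in\C{B}(X)$, $t\in\br$, $\tau_n\to-\infty$ and $\xi_n\in U(t,\tau_n;B)$, pullback attraction gives $\dist(\xi_n,A(t))\to0$, hence there are $\eta_n\in A(t)$ with $\varrho(\xi_n,\eta_n)\to0$, and compactness of $A(t)$ produces a convergent subsequence $\eta_{n_k}\to\eta$, which forces $\xi_{n_k}\to\eta$. For dissipativity, put $B_0(t)=N_1(A(t))$; since $A(t)$ is compact it is bounded, so $\bb_0=\{B_0(t):t\in\br\}$ is a bounded nonautonomous set, and it absorbs: for $E\in\C{B}(X)$ and $t\in\br$, the convergence $\dist(U(t,\tau;E),A(t))\to0$ as $\tau\to-\infty$ provides $\bar\tau(t,E)\le t$ such that $U(t,\tau;E)\subset N_1(A(t))=B_0(t)$ for all $\tau\le\bar\tau(t,E)$.

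There is no genuinely hard step. The only thing to keep an eye on is that dissipativity is used ``at the fixed final time $t$'' — the absorbing time $\bar\tau$ is allowed to depend on both $t$ and $B$ — which is exactly what makes the inclusion $\omega(t,B)\subset\overline{B_0(t)}$ hold for all bounded $B$ simultaneously; in finite dimensions this inclusion upgrades the (automatic) closedness of $\bk$ to compactness, so that no monotonicity of $\bb_0$ is required.
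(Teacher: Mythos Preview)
Your proof is correct and follows essentially the same route as the paper: for sufficiency you unpack what the paper encapsulates in Theorem~\ref{closed-bounded-attracting} (the minimal closed attracting set is bounded under dissipativity, hence compact in finite dimensions), and for necessity you mirror the paper's argument exactly, taking $B_0(t)=N_1(A(t))$ where the paper writes $N_\eps(A(t))$.
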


\begin{proof}
Sufficiency follows immediately from the Theorem \ref{closed-bounded-attracting}. 
Necessity follows the lines of the proof of necessity in Theorem \ref{thm:eqivalence_attractor}. 
The proof of asymptotic compactness is exactly the same as the corresponding proof in Theorem 
\ref{thm:eqivalence_attractor}. For the proof of dissipativity observe that if we define the 
nonautonomous set $\mathbb{C}=\{N_\eps(A(t))\ : t \in \br\}$ then it follows that it is bounded 
and absorbing and the proof is complete.  
\end{proof}

\begin{remark}
If we define the pullback attractor as a nonautonomous set that is compact, attracting and 
invariant then this set does not have to be defined uniquely (see \cites{caraballo-2010,carvalho-book,MZ}). 
Here we impose the minimality rather then the invariance in the definition of the pullback attractor and hence 
the attractor, if it exists, it must be unique even if it is backward unbounded.
\end{remark}

\subsection{Equivalent conditions}
We summarize in this paragraph the main results on the existence of pullback attractors. As we have seen above,
it is necessary to distinguish between the case of a general metric space and a (uniformly convex) Banach space.
On account of Proposition \ref{prop:dissip}, we state our result for monotonically dissipative processes, having in 
mind that other equivalent notions of dissipativity can be assumed. We begin with the case in which the phase space
$X$ is assumed to be a complete metric space.

\begin{theorem}\label{thm:metricequiv}
Let $(X,\varrho)$ be a complete metric space,  and let $U$ be a monotonically dissipative 
m-process acting on $X$, with bounded nonautonomous absorbing set 
$\bb_0=\{B_0(t): t\in\br\}$. The following are equivalent.
\begin{enumerate}

\item $U$ is asymptotically compact.

\item For every $t\in\br$, $\omega(t,B_0(t))$ is nonempty, compact and attracts $B_0(t)$.

\item $U$ is pullback $\omega$-limit compact.

\item There exists the backward bounded pullback attractor of $U$.

\end{enumerate}
\end{theorem}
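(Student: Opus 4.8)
The plan is to use pullback asymptotic compactness (condition~(1)) as the hub and reduce everything else to it. Two of the equivalences are already available: (1)~$\Leftrightarrow$~(3) is exactly Proposition~\ref{prop:firstequiv}, and, since $U$ is monotonically dissipative by assumption, (1)~$\Leftrightarrow$~(4) is exactly Theorem~\ref{thm:eqivalence_attractor}. The implication (1)~$\Rightarrow$~(2) is also immediate: Proposition~\ref{prop:firstequiv} tells us that asymptotic compactness forces $\Omega(B)$ to be nonempty, compact and pullback attracting for \emph{every} $B\in\C{B}(X)$, and specializing this to $B=B_0(t)$ and reading off the component at time~$t$ yields precisely~(2). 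Thus the only implication carrying genuinely new content is (2)~$\Rightarrow$~(1).

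For (2)~$\Rightarrow$~(1), fix $B\in\C{B}(X)$, $t\in\br$, a sequence $\tau_n\to-\infty$ and points $\xi_n\in U(t,\tau_n;B)$; the task is to extract a convergent subsequence. First I would establish that
$$
\dist\big(\xi_n,\omega(t,B_0(t))\big)\longrightarrow 0\qquad\text{as }n\to\infty.
$$
To this end fix $\eps>0$. Since by~(2) the set $\omega(t,B_0(t))$ attracts $B_0(t)$, there is $s_\eps\leq t$ with $\dist(U(t,s;B_0(t)),\omega(t,B_0(t)))\leq\eps$ for all $s\leq s_\eps$; fix one such $s$. By dissipativity there is $\bar\tau(s,B)\leq s$ with $U(s,\tau;B)\subset B_0(s)$ for all $\tau\leq\bar\tau(s,B)$, and monotonicity gives $B_0(s)\subset B_0(t)$. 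Hence, by the m-process property (note $t\geq s\geq\tau$), for every $\tau\leq\bar\tau(s,B)$,
$$
U(t,\tau;B)\subset U(t,s;U(s,\tau;B))\subset U(t,s;B_0(s))\subset U(t,s;B_0(t))\subset N_\eps\big(\omega(t,B_0(t))\big),
$$
so that $\xi_n\in N_\eps(\omega(t,B_0(t)))$ as soon as $\tau_n\leq\bar\tau(s,B)$. Since $\eps>0$ was arbitrary, the displayed convergence follows.

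With the claim in hand, one finishes exactly as in the proof of Proposition~\ref{prop:asymptotic}: choose $\eta_n\in\omega(t,B_0(t))$ with $\varrho(\xi_n,\eta_n)\to 0$, use the compactness of $\omega(t,B_0(t))$ granted by~(2) to pass to a subsequence with $\eta_{n_k}\to\eta\in\omega(t,B_0(t))$, and conclude $\xi_{n_k}\to\eta$; hence $U$ is asymptotically compact. The crucial point, and the only real difficulty, is precisely the implication (2)~$\Rightarrow$~(1): condition~(2) constrains only the distinguished absorbing family $\{B_0(t)\}$, and the mechanism that upgrades this to control of \emph{all} bounded sets is the monotonicity $B_0(s)\subset B_0(t)$, which allows the forward image of an arbitrary bounded set to be funneled into the single well-behaved limit set $\omega(t,B_0(t))$. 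Without a dissipativity hypothesis of this kind the statement breaks down entirely, as the m-process $U(t,\tau;x)=\{t-\tau\}$ discussed after Proposition~\ref{prop:firstequiv} already illustrates.
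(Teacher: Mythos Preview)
Your proof is correct and shares the same skeleton as the paper's: the equivalences (1)$\Leftrightarrow$(3), (1)$\Leftrightarrow$(4), and (1)$\Rightarrow$(2) are handled identically by invoking Proposition~\ref{prop:firstequiv} and Theorem~\ref{thm:eqivalence_attractor}, and the crux in both arguments is the inclusion chain
\[
U(t,\tau;B)\subset U(t,s;U(s,\tau;B))\subset U(t,s;B_0(s))\subset U(t,s;B_0(t)),
\]
obtained from the m-process property together with monotone dissipativity. The only difference is in how the loop is closed: the paper uses this inclusion to bound the Kuratowski measure $\kappa\big(\bigcup_{r\leq\tau}U(t,r;B)\big)$ and deduces (3), whereas you use it to bound $\dist(\xi_n,\omega(t,B_0(t)))$ and deduce (1) directly via a compactness-plus-distance argument in the style of Proposition~\ref{prop:asymptotic}. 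Your route is marginally more elementary since it avoids the Kuratowski machinery altogether; the paper's route has the small advantage of making the $\omega$-limit-compactness explicit along the way, but the content is the same.
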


\begin{proof}
Proposition \ref{prop:firstequiv} and Theorem \ref{thm:eqivalence_attractor} ensure the equivalence among (1), (3) and
(4). Clearly, a further use of  Proposition \ref{prop:firstequiv} also ensures
that (1) implies (2). Let us now prove that (2) implies (3).  Fix a bounded set $B\subset X$. By definition of dissipativity, 
for every $s\in\br$, there exists $\bar\tau=\bar\tau(s,B)\leq s$ such that
$$
U(s,\tau;B)\subset B_0(s), \qquad \forall \tau \leq \bar\tau.
$$
Let now $\tau\leq s\leq t$. From the definition of m-process and the above dissipativity condition, we have
$$
U(t,\tau;B)\subset U(t,s;U(s,\tau;B))\subset U(t,s;B_0(s)), \qquad\forall \tau\leq \bar\tau\leq s\leq t.
$$
Appealing to the monotonicity of the absorbing set, the above inclusion implies
$$
U(t,\tau;B)\subset  U(t,s;B_0(t)), \qquad\forall \tau\leq \bar\tau\leq s\leq t.
$$
As a consequence
$$
U(t,\tau;B)\subset \bigcup_{r\leq s} U(t,r;B_0(t)), \qquad\forall \tau\leq \bar\tau\leq s\leq t,
$$
and, therefore,
$$
\bigcup_{r\leq \tau} U(t,r;B)\subset \bigcup_{r\leq s} U(t,r;B_0(t)), \qquad\forall \tau\leq \bar\tau\leq s\leq t.
$$
Property \ref{K.2} of the Kuratowski measure then implies that
$$
\kappa\left(\bigcup_{r\leq \tau} U(t,r;B)\right)\leq \kappa\left(\bigcup_{r\leq s} U(t,r;B_0(t))\right), \qquad\forall \tau\leq \bar\tau\leq s\leq t.
$$
Arguing as in Proposition \ref{prop:firstequiv}, it is not hard to see that
$$
\lim_{s\to-\infty}\kappa\left(\bigcup_{r\leq s} U(t,r;B_0(t))\right)=0,
$$
from which we infer that
$$
\lim_{\tau\to-\infty}\kappa\left(\bigcup_{r\leq \tau} U(t,r;B)\right)=0.
$$
Hence, $U$ is pullback $\omega$-limit compact, and the proof is concluded.
\end{proof}

In uniformly convex Banach spaces, also the pullback flattening condition can be added to the above list.
\begin{theorem}\label{thm:metricequiv2}
Let $X$ be a uniformly convex Banach space,  and let $U$ be a monotonically dissipative 
m-process acting on $X$, with bounded nonautonomous absorbing set 
$\bb_0=\{B_0(t): t\in\br\}$. The following are equivalent.
\begin{enumerate}

\item $U$ is asymptotically compact.

\item For every $t\in\br$ the set $\omega(t,B_0(t))$ is nonempty, compact and attracts $B_0(t)$.

\item $U$ is pullback $\omega$-limit compact.

\item $U$ is pullback flattening.

\item There exists the backward bounded pullback attractor of $U$.

\end{enumerate}
\end{theorem}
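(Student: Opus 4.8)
The plan is to reduce everything to results already proved, since no genuinely new argument is needed. Observe first that a uniformly convex Banach space is in particular a complete metric space, so Theorem \ref{thm:metricequiv} applies verbatim to $X$: under the standing hypotheses (monotone dissipativity, with bounded nonautonomous absorbing set $\bb_0=\{B_0(t):t\in\br\}$), it already yields the equivalence of statements (1), (2), (3) and (5) here (the last of these being item (4) in Theorem \ref{thm:metricequiv}). Thus the whole task is to splice the pullback flattening condition (4) into this chain of equivalences.

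To do so, I would use the two flattening lemmata. Lemma \ref{lem:flattening1} holds on an arbitrary Banach space and gives that pullback flattening implies pullback $\omega$-limit compactness, i.e.\ (4) $\Rightarrow$ (3). Conversely, Lemma \ref{lem:flattening2} is exactly the place where uniform convexity of $X$ is used: it gives that pullback $\omega$-limit compactness implies pullback flattening, i.e.\ (3) $\Rightarrow$ (4). Hence (3) $\Leftrightarrow$ (4). Combining this with the equivalences (1) $\Leftrightarrow$ (2) $\Leftrightarrow$ (3) $\Leftrightarrow$ (5) furnished by Theorem \ref{thm:metricequiv}, all five statements are equivalent, which is the assertion.

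There is essentially no obstacle here; the content lives in Lemmata \ref{lem:flattening1}--\ref{lem:flattening2} and Theorem \ref{thm:metricequiv}, and this theorem is only a consolidation. The one point deserving a moment's attention is that Lemmata \ref{lem:flattening1}--\ref{lem:flattening2} are statements about the equivalence of the two compactness-type conditions alone and do not invoke dissipativity or the absorbing set $\bb_0$, so adjoining (4) to the list requires no reworking of the dissipativity hypotheses; they enter only through the citation of Theorem \ref{thm:metricequiv}.
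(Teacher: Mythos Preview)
Your proposal is correct and matches the paper's own argument. The paper simply writes that the proof ``follows immediately from Theorems \ref{prop:secondequiv} and \ref{thm:metricequiv}''; you cite Theorem \ref{thm:metricequiv} together with Lemmata \ref{lem:flattening1}--\ref{lem:flattening2}, which are precisely the ingredients packaged into Theorem \ref{prop:secondequiv}, so the two routes are the same in substance.
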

The proof of the above theorem follows immediately from Theorems  
\ref{prop:secondequiv} and \ref{thm:metricequiv}.

\section{Invariance}\label{sec:inv}
\noindent We discuss here the invariance properties of the pullback 
attractor in both the strict and the non-strict case. In this section,
the concepts of $t_\star$-closed and closed processes play an essential role and the phase
space will be a complete metric space $(X,\varrho)$ on which the m-process $U$ acts.
Specifically, we will discuss sufficient conditions under which the pullback attractor
$\ba=\{A(t):t\in\br\}$ is either
\begin{itemize}
	\item \emph{negatively invariant}: $A(t)\subset U(t,s)A(s)$ for every $s\leq t\in \br$;
	\item \emph{invariant}:  $A(t)= U(t,s)A(s)$ for every $s\leq t\in \br$.
\end{itemize}
Generically speaking, non-strict m-processes possess negatively invariant attractors, while
strict processes have (fully) invariant ones.

\subsection{Weak continuity properties of m-processes}
The generalization of the concept of continuity from the case of a  single-valued function to that of a set-valued
one can be obtained by using the notions of upper and lower semicontinuity. When dealing with multivalued dynamical
systems, a natural notion to consider is upper semicontinuity, which usually needs to be complemented with the assumption that the values
$U(t,\tau;\xi)$ are closed for every $\tau\leq t\in\br$ and every $\xi\in X$. In turn, this implies that the graph of $U(t,\tau;\cdot)$ is
closed--a much weaker condition than continuity in the single-valued case. In the context of single-valued dynamical
systems, the closed graph condition was first used 
in \cite{PZ}, and then generalized to even weaker notions (see below)
in \cite{chep-conti-pata-2012} for semigroups and in \cites{chep-conti-pata-2013, CPT2013} for processes. 
For the multivalued case, the reader is referred to \cites{melnik-1998,melnik-2000, Coti_Zelati_set_valued, lukaszewicz_kalita, KKV}.

\begin{definition}
Let $U$ be an m-process acting on a complete metric space $(X,\varrho)$. Then $U$ is said to be 
\begin{itemize}
\item \emph{$t_\star$-closed} if there exists $t_\star>0$ such that $U(t,t-t_\star;\cdot)$ has closed graph
for every $t\in\br$, namely if the following implication holds true: 
$$
\eta_n\to \eta,\quad U(t,t-t_\star;\eta_n) \ni \xi_n\to \xi \qquad \implies \qquad \xi\in U(t,t-t_\star;\eta);
$$
\item \emph{closed} if it is $t_\star$-closed for every $t_\star>0$.
\end{itemize}
\end{definition}

\subsection{The non-strict case} The negative invariance of the pullback attractor is strictly related to the points at which the graph
of the m-process is closed.
 
\begin{proposition}\label{prop:nonstrictinv}
If $U$ is a $t_\star$-closed, asymptotically compact and monotonically dissipative m-process, then the pullback attractor $\ba=\{A(t):t\in\br\}$ satisfies
the inclusion
\begin{equation}\label{eq:tneginv}
A(t)\subset U(t,t-t_\star; A(t-t_\star)), \qquad \forall t\in\br.
\end{equation}
If, furthermore, $U$ is a closed m-process, then the attractor is negatively invariant, namely
\begin{equation}\label{eq:neginv}
A(t)\subset U(t,s; A(s)), \qquad \forall s\leq t\in\br.
\end{equation}
\end{proposition}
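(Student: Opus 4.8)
The plan is to take a point $y\in A(t)$, pull it back to $-\infty$ along a trajectory starting in the absorbing set, factor that trajectory through the intermediate time $t-t_\star$, and then feed the last leg into the closed-graph hypothesis. The two facts from the preceding theory that I would use are: $A(t)=\omega(t,B_0(t))$ for every $t$ (from the proof of Theorem~\ref{thm:eqivalence_attractor}), and $\omega(r,B)\subset A(r)$ for every $r\in\br$ and every $B\in\C{B}(X)$ (since by Theorem~\ref{closed_attracting} the attractor coincides with the minimal closed attracting set $\overline{\bigcup_{B\in\C{B}(X)}\omega(r,B)}$).

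First I would fix $t\in\br$ and $y\in A(t)=\omega(t,B_0(t))$, so that by \eqref{equiv:omega} there are $\tau_n\to-\infty$ and $\xi_n\in U(t,\tau_n;B_0(t))$ with $\xi_n\to y$. For $n$ large we have $\tau_n\leq t-t_\star$, so property \ref{ii} of an m-process yields
$$
\xi_n\in U(t,\tau_n;B_0(t))\subset U\big(t,t-t_\star;U(t-t_\star,\tau_n;B_0(t))\big),
$$
hence $\xi_n\in U(t,t-t_\star;\eta_n)$ for some $\eta_n\in U(t-t_\star,\tau_n;B_0(t))$. Since $B_0(t)\in\C{B}(X)$, $\tau_n\to-\infty$, and $U$ is asymptotically compact, a subsequence $\eta_{n_k}\to\eta$; by \eqref{equiv:omega} again $\eta\in\omega(t-t_\star,B_0(t))\subset A(t-t_\star)$. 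Applying $t_\star$-closedness to $\eta_{n_k}\to\eta$ and $U(t,t-t_\star;\eta_{n_k})\ni\xi_{n_k}\to y$ gives $y\in U(t,t-t_\star;\eta)\subset U(t,t-t_\star;A(t-t_\star))$, which is \eqref{eq:tneginv}.

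For the negative invariance \eqref{eq:neginv}, I would observe that a closed m-process is $t_\star$-closed for every $t_\star>0$, so given $s<t$ one simply applies \eqref{eq:tneginv} with $t_\star=t-s$; the degenerate case $s=t$ is immediate from \ref{i}. The only point requiring any care is the bookkeeping in the factorization through $t-t_\star$ together with the verification that the limit $\eta$ produced by asymptotic compactness lies in $A(t-t_\star)$ and not merely in $\omega(t-t_\star,B_0(t))$ — this is precisely where the identification of the attractor with the minimal closed attracting set is invoked. Beyond that, no real obstacle is anticipated.
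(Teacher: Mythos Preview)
Your proposal is correct and follows essentially the same route as the paper's proof: represent $A(t)$ as $\omega(t,B_0(t))$, factor the trajectory through $t-t_\star$ via property \ref{ii}, extract a convergent subsequence of the intermediate points by asymptotic compactness, and close with the $t_\star$-closed hypothesis. The only cosmetic difference is in justifying $\omega(t-t_\star,B_0(t))\subset A(t-t_\star)$: the paper invokes directly that $\ba$ is closed and pullback attracts $B_0(t)$ (hence contains its $\omega$-limit), whereas you route this through the minimality characterization $A(r)=\overline{\bigcup_{B}\omega(r,B)}$; both are equally valid.
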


\begin{proof}
It is clear that all we need to prove is \eqref{eq:tneginv}, as the second assertion will follow just by
the possibility to take $t_\star=t-s$, obtaining (\ref{eq:neginv}). According to Theorem \ref{thm:metricequiv},
there exists a bounded nonautonomous absorbing set $\bb_0=\{B_0(t): t\in\br\}$ such that
$$
A(t)=\omega(t,B_0(t)), \qquad \forall t\in\br.
$$
Let $\xi\in A(t)$. Then, there exist sequences $\tau_n\to -\infty$ and  $\xi_n \in U(t,\tau_n; B_0(t))$ such that
$\xi_n\to \xi$ as $n\to \infty$. We deduce that (for $n$ big enough)
\begin{align*}
\xi_n\in U(t,\tau_n; B_0(t))&\subset U(t,t-t_\star; U(t-t_\star, \tau_n; B_0(t))).
\end{align*}
Hence, $ \xi_n\in U(t,t-t_\star; \eta_n)$ for some $\eta_n \in U(t-t_\star, \tau_n; B_0(t))$. By asymptotic compactness,
there exists a (non-relabeled) subsequence such that 
$$
\eta_n\to \eta\in \omega(t-t_\star,B_0(t))\subset A(t-t_\star),
$$
where the last inclusion follows from the fact that $\mathbb{A}$ is pullback attracting. Since the graph of $U(t,t-t_\star;\cdot)$ is closed, we infer that
$$
 \eta_n\to \eta, \qquad U(t,t-t_\star; \eta_n) \ni \xi_n\to \xi \qquad \implies \qquad  \xi \in U(t,t-t_\star; \eta).
$$
Hence,
$$
\xi\in U (t,t-t_\star; A(t-t_\star))\qquad \implies \qquad A(t)\subset  U(t,t-t_\star; A(t-t_\star)),
$$
concluding the proof.

\end{proof}

\subsection{The strict case}
Surprisingly, the picture in this case is quite different, and the strictness assumption entails
a much stronger result under much weaker assumptions. Indeed, we shall first prove that \eqref{eq:tneginv}
is enough to ensure the full invariance of the pullback attractor.

\begin{proposition}\label{prop:strictinv}
Let $U$ be a strict m-process possessing a backward bounded pullback attractor $\ba=\{A(t):t\in\br\}$. If there exists $t_\star\in \br$ such
that  
\begin{equation}\label{eq:suffinv}
A(t)\subset U(t,t-t_\star; A(t-t_\star)), \qquad \forall t\in\br,
\end{equation}
then, in fact,
\begin{equation}\label{eq:inv0}
A(t)= U(t,s) A(s), \qquad \forall s\leq t\in\br.
\end{equation}
\end{proposition}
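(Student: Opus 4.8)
The plan is to leverage the strictness of the process to bootstrap the inclusion \eqref{eq:suffinv} into a chain of both inclusions at all time pairs. First I would observe that \eqref{eq:suffinv} can be iterated: applying it at $t$, then at $t-t_\star$, and so on, and using the strictness property $U(t,t-t_\star;\cdot)=U(t,t-2t_\star;\cdot)$ composed appropriately, one gets
$$
A(t)\subset U(t,t-nt_\star;A(t-nt_\star)), \qquad \forall n\in\bn,\ \forall t\in\br.
$$
The point of strictness here is that composing the inclusions does not lose anything: $U(t,t-t_\star;U(t-t_\star,t-2t_\star;A(t-2t_\star)))=U(t,t-2t_\star;A(t-2t_\star))$, so the iterated inclusion is genuinely $A(t)\subset U(t,t-nt_\star;A(t-nt_\star))$ rather than something weaker. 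From this and the backward boundedness of $\ba$ (so that $\bigcup_{s\le t}A(s)$ is bounded, hence pullback attracted by $\ba$), together with asymptotic compactness and the pullback attraction property of $\ba$, one can upgrade \eqref{eq:suffinv} from the special lag $t_\star$ to an arbitrary lag $t-s$: given $s\le t$, pick $n$ so that $t-nt_\star\le s$, write $A(t)\subset U(t,t-nt_\star;A(t-nt_\star))\subset U(t,s;U(s,t-nt_\star;A(t-nt_\star)))$ using property \ref{ii}, and absorb the inner set using that $A(t-nt_\star)$ is bounded and $\ba$ pullback attracts bounded sets — more precisely, I would argue that $\bigcup_{r\le s}U(s,r;A(\cdot))$ stays inside an arbitrarily small neighborhood of $A(s)$ and then pass to the limit $n\to\infty$ using the $t_\star$-closedness that comes for free (actually here we only have strictness, so the closing argument must be replaced by a direct topological argument using compactness of $A(s)$ and asymptotic compactness, exactly as in the necessity part of Theorem~\ref{thm:eqivalence_attractor}). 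This yields the negative invariance \eqref{eq:neginv}, i.e. $A(t)\subset U(t,s;A(s))$ for all $s\le t$.

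Next I would prove the reverse inclusion $U(t,s;A(s))\subset A(t)$. This is the "forward invariance" direction and it follows from the invariance/attraction structure without any continuity hypothesis. Take $\xi\in U(t,s;A(s))$, say $\xi\in U(t,s;a)$ with $a\in A(s)$. Using negative invariance already established at the lower level, $a\in A(s)\subset U(s,s-kt_\star;A(s-kt_\star))$ for every $k$, so there is $a_k\in A(s-kt_\star)$ with $a\in U(s,s-kt_\star;a_k)$; then by strictness $\xi\in U(t,s;U(s,s-kt_\star;a_k))=U(t,s-kt_\star;a_k)\subset U(t,s-kt_\star;A(s-kt_\star))$. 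Since $s-kt_\star\to-\infty$ and $\bigcup_{r\le s}A(r)$ is bounded, the pullback attraction property of $\ba$ forces $\dist(\xi,A(t))\le\dist(U(t,s-kt_\star;\bigcup_{r\le s}A(r)),A(t))\to0$, hence $\dist(\xi,A(t))=0$, and since $A(t)$ is compact (in particular closed) we conclude $\xi\in A(t)$. Combining the two inclusions gives \eqref{eq:inv0}.

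The main obstacle I anticipate is the first direction — promoting \eqref{eq:suffinv} at lag $t_\star$ to \eqref{eq:neginv} at arbitrary lag $s\le t$ — because one cannot simply iterate and take a limit naively: the inclusion $A(t)\subset U(t,s;U(s,t-nt_\star;A(t-nt_\star)))$ has an inner set $U(s,t-nt_\star;A(t-nt_\star))$ that is growing with $n$ and need not converge to $A(s)$ setwise, only in Hausdorff semidistance via pullback attraction. The careful point is to show: for $\xi\in A(t)$ one produces, for each $n$, a point $\zeta_n\in U(s,t-nt_\star;A(t-nt_\star))$ with $\xi\in U(t,s;\zeta_n)$; by pullback attraction $\dist(\zeta_n,A(s))\to0$, so by compactness of $A(s)$ a subsequence $\zeta_{n_k}\to\zeta\in A(s)$; and then one would like $\xi\in U(t,s;\zeta)$, which is exactly where one would want closedness of $U(t,s;\cdot)$. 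Since only strictness is assumed in this proposition, the resolution is to avoid closing the graph: instead keep $\zeta_n$ and note $\xi\in U(t,s;\zeta_n)\subset U(t,s;N_{\eps_n}(A(s)))$ and — crucially — use strictness in the other direction together with the already-known compactness to reduce to showing $\xi\in A(t)$ by the attraction argument of the second paragraph applied symmetrically; in other words, I expect the clean route is to prove the reverse inclusion (forward invariance) first, unconditionally, and then derive negative invariance from \eqref{eq:suffinv} by a counting-plus-compactness argument that mirrors the necessity proof of Theorem~\ref{thm:eqivalence_attractor} rather than by a graph-closing step. Getting the logical order right so that no continuity is ever invoked is the delicate part; everything else is bookkeeping with properties \ref{i}, \ref{ii}, strictness, backward boundedness, and compactness of the fibers $A(t)$.
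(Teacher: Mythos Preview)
Your forward invariance argument (the second paragraph, showing $U(t,s;A(s))\subset A(t)$) is correct and is essentially the paper's argument for \eqref{eq:ind2}: iterate \eqref{eq:suffinv} at the lower level, use strictness to push back to time $t-nt_\star$, then invoke backward boundedness and pullback attraction.

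The gap is in the negative invariance direction. You correctly identify that from the chain
\[
A(t)\subset U(t,t-nt_\star;A(t-nt_\star))\subset U(t,s;U(s,t-nt_\star;A(t-nt_\star)))
\]
one would like to absorb the inner set into $A(s)$, and you correctly note that a graph-closing argument is unavailable. But your proposed workaround is muddled: the sentence ``reduce to showing $\xi\in A(t)$'' is circular, since $\xi\in A(t)$ is where you started, and the vague ``counting-plus-compactness argument that mirrors the necessity proof of Theorem~\ref{thm:eqivalence_attractor}'' is never made precise.

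The resolution is much simpler than you anticipate, and you already have the ingredients. Once forward invariance $U(\tau,r;A(r))\subset A(\tau)$ is established for \emph{all} $r\leq\tau$ (your second paragraph does this), apply it with $\tau=s$ and $r=t-nt_\star$ to get
\[
U(s,t-nt_\star;A(t-nt_\star))\subset A(s)
\]
for a single fixed $n$ with $t-nt_\star\leq s$. Substituting directly into the chain above yields
\[
A(t)\subset U(t,s;U(s,t-nt_\star;A(t-nt_\star)))\subset U(t,s;A(s)),
\]
and you are done --- no limit $n\to\infty$, no compactness, no approximation by neighborhoods. This is exactly the paper's route in \eqref{eq:ind4}: prove forward invariance first, then negative invariance at arbitrary lag follows by a one-line sandwich. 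Your instinct to reverse the order of the two directions was right; you just didn't see how immediately the second falls out once the first is in hand.
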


\begin{proof}
Let $t\in\br$ be fixed. Arguing by induction and using the strictness of the m-process, 
for any $\tau\geq t$ and any integer $n$ we obtain the inclusion
\begin{equation}\label{eq:ind}
\begin{aligned}
U(\tau,t;A(t))&\subset U(\tau,t; U(t,t-t_\star; A(t-t_\star)))=U(\tau,t-t_\star; A(t-t_\star))\\
&\subset U(\tau,t-2t_\star; A(t-2t_\star))\ldots \subset U(\tau,t-nt_\star; A(t-nt_\star)).
\end{aligned}
\end{equation}
Since $\ba$ is backward bounded and pullback attracting , we obtain
$$
\begin{aligned}
\dist (U(\tau,t;A(t)), A(\tau))&\leq \dist (U(\tau,t-nt_\star; A(t-nt_\star)), A(\tau))\\
&\leq\dist \left(U\left(\tau,t-nt_\star; \bigcup_{s\leq t}A(s)\right), A(\tau)\right)\to 0,
\end{aligned}
$$
as $n\to \infty$. Thus 
\begin{equation}\label{eq:ind2}
U(\tau,t;A(t))\subset A(\tau), \qquad \forall \tau\geq t,
\end{equation}
since $A(\tau)$ is closed. On the other hand, setting $\tau=t$ in \eqref{eq:ind} and using \eqref{eq:ind2}, we have that
$$
A(t)\subset U(t,t-nt_\star; A(t-nt_\star))\subset A(t), 
$$
namely
\begin{equation}\label{eq:ind3}
A(t)= U(t,t-nt_\star; A(t-nt_\star)), \qquad \forall n\in\bn. 
\end{equation}
Now, fix $s\leq t$ and take $n$ large enough so that $t-nt_\star \leq s$. Using \eqref{eq:ind2}-\eqref{eq:ind3}, we have
\begin{equation}\label{eq:ind4}
\begin{aligned}
A(t)&=U(t,t-nt_\star; A(t-nt_\star))\subset U(t,s;U(s,t-nt_\star; A(t-nt_\star)))\\
&\subset U(t,s;A(s))\subset A(t),
\end{aligned}
\end{equation}
proving the reverse of the inclusion \eqref{eq:ind2}, and hence the sought equality.
\end{proof}
Notice that in \eqref{eq:ind4}, the second inclusion is in fact an equality due to the strictness of the m-process. 
We stress here that strictness was used in a crucial way only in the induction procedure in \eqref{eq:ind}.

Combining Propositions \ref{prop:nonstrictinv} and \ref{prop:strictinv}, it is immediate to deduce full invariance
of the pullback attractor for strict m-processes.
\begin{corollary}\label{corollary:inv}
If $U$ is a $t_\star$-closed, asymptotically compact and monotonically dissipative strict m-process, 
then the pullback attractor $\ba=\{A(t):t\in\br\}$ is invariant, namely
\begin{equation}\label{eq:inv}
A(t)= U(t,s) A(s), \qquad \forall s\leq t\in\br.
\end{equation}
\end{corollary}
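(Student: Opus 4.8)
The plan is to simply chain the two preceding propositions, after first securing the existence of the object they both speak about. First I would invoke Theorem \ref{thm:eqivalence_attractor}: since $U$ is pullback asymptotically compact and monotonically dissipative, it possesses a unique backward bounded pullback attractor $\ba=\{A(t):t\in\br\}$, and by Theorem \ref{thm:metricequiv} one even has the concrete representation $A(t)=\omega(t,B_0(t))$ in terms of the pullback $\omega$-limit of the absorbing set. This is exactly what makes the hypotheses of both Proposition \ref{prop:nonstrictinv} and Proposition \ref{prop:strictinv} available.

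Next I would apply Proposition \ref{prop:nonstrictinv}, using the very same $t_\star$ furnished by the $t_\star$-closedness hypothesis of the corollary. Its conclusion \eqref{eq:tneginv} yields precisely the one-step inclusion $A(t)\subset U(t,t-t_\star;A(t-t_\star))$ for every $t\in\br$. This is exactly the hypothesis \eqref{eq:suffinv} required to feed into Proposition \ref{prop:strictinv}, whose remaining hypotheses---namely that $U$ is strict and that $\ba$ is a backward bounded pullback attractor---are part of the standing assumptions and of the previous step, respectively. Proposition \ref{prop:strictinv} then upgrades the one-step inclusion to the two-sided equality \eqref{eq:inv0}, i.e. $A(t)=U(t,s)A(s)$ for all $s\leq t\in\br$, which is \eqref{eq:inv}.

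There is essentially no obstacle internal to the corollary itself; the entire difficulty has already been absorbed into the two propositions it rests on. If I had to single out the one delicate bookkeeping point, it is that one must notice Proposition \ref{prop:strictinv} needs the inclusion \eqref{eq:suffinv} only for a single fixed value of $t_\star$, not for all $t_\star>0$ simultaneously---which is indeed how that proposition is phrased---so that the $t_\star$ supplied by $t_\star$-closedness suffices even though $U$ is not assumed to be closed. With that observation in hand the corollary follows immediately, and it is worth remarking that this is precisely the mechanism by which strictness buys full invariance from a condition that, in the non-strict case, only delivers negative invariance.
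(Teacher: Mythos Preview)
Your proposal is correct and matches the paper's own argument: the corollary is stated immediately after the two propositions with the remark that combining them yields full invariance, which is exactly the chaining you describe. Your observation that Proposition~\ref{prop:strictinv} requires the inclusion \eqref{eq:suffinv} only for a single fixed $t_\star$ is the key point that makes the $t_\star$-closedness hypothesis sufficient, and you have identified it correctly.
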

\begin{remark}
The above theorem is a generalization of Theorem 1 from \cite{KKV}. Indeed, the closed graph 
requirement  is replaced here by the weaker $t_\star$-closed property.
\end{remark}

\section{A reaction-diffusion equation with multivalued semilinear term}\label{sec:react}

\noindent This section presents an example of an initial and 
boundary value problem governed by a reaction-diffusion equation with 
multivalued semilinear term having the form of the Clarke subgradient. Differential inclusions with multivalued terms 
in the form of the Clarke subdifferentials were introduced by Naniewicz and Panagiotpoulos \cite{Naniewicz} to express 
the friction and normal contact laws in the theory of elasticity (see also \cite{migorski-book} for a more up-to-date overview of the subject).
Since then, such problems were also used to model the diffusion through semipermeable membranes \cite{miettinen} and boundary feedback 
control laws for heat conduction problems \cite{Wang}. The formalism of the Clarke subdifferential can be used to put in a common 
abstract framework three models recalled in \cite{balibrea}*{Sections 4.1.3-4.1.4}: a model of combustion in porous media, a model 
of conduction of electrical impulses in nerve axons and a model from climatology.
After proving a global existence result for weak solutions, 
we show that the generated multivalued (strict) process possesses a fully invariant pullback attractor.

\subsection{Problem formulation} 
Recall that for a locally Lipschitz functional $J$ on a  Banach space $X$,
the generalized directional derivative in the sense of Clarke at the point $x\in X$ and direction $y\in X$ has the form
$$
J^0(x;y)=\limsup_{z\to x, \lambda\to 0^+}\frac{J(z+\lambda y)-J(z)}{\lambda},
$$
while the Clarke subgradient of $J$ at the point $x\in X$ is given as
$$
\partial J(x)=\{\xi\in X^*\,:\,\langle \xi,y\rangle \leq J^0(x;y)\ \mbox{for all}\ y\in X \}.
$$
If $X=\br^n$ then a following simple characterization holds
$$
\partial J(x) = \conv\left\{\lim_{n\to \infty}J'(x_n)\,:\, x_n\to x,\ x_n\notin S,\  \mbox{and}\  \{J'(x_n)\}\  \mbox{converges}\right\},
$$
where $S$ is any Lebesgue null set containing the nondifferentiability points of $J$ in the neighborhood of $x$. 
For more details and properties of the Clarke subgradient see \cites{clarke, Naniewicz, denkowski, migorski-book}. 
In the sequel, for functionals of more than one variable we will always understand the symbol 
$\partial$ as the Clarke subdifferential with respect to the last variable. Let $\Omega\subset \br^d$, 
$d\in \bn$ be a bounded open domain with Lipschitz boundary $\partial \Omega = \overline{\Gamma}_D\cup\overline{\Gamma}_N$, 
where $\Gamma_D$ and $\Gamma_N$ are mutually disjoint, relatively open and nonempty. 
Let $t_0\in \br$ be an initial time. Consider the following initial and boundary value problem, where 
we are looking for $u:\overline{\Omega}\times [t_0,\infty)\to \br$ such that
\begin{align}
  u_t(x,t)-\Delta u(x,t)+ \partial j(x,t,u(x,t)) &\ni f_0 (x,t) \ & &\mbox{on}\  \Omega\times (t_0,\infty),\label{eq:inclusion}\\
  u(x,t) &=  0 \  & &\mbox{on} \ \Gamma_D\times (t_0,\infty),\\
  \frac{\partial u(x,t)}{\partial \nu}  &=  f_N(x,t)\  & &\mbox{on}\ \Gamma_N\times (t_0,\infty),\\
  u(x,t_0)  &=  u_{0}(x)\  & &\mbox{on}\ \Omega.
\end{align}
Define $V=\{v\in H^1(\Omega)\, :\, v= 0\ \mbox{on}\ \Gamma_D\}$ and $H=L^2(\Omega)$. 
Then $V\subset H\subset V^*$ constitute an evolution triple with the embeddings being dense, 
continuous, and compact. The norm in $V$ and the duality pairing between $V$ and $V^*$ will be denoted 
by $\|\cdot\|$ and $\langle\cdot,\cdot\rangle$, respectively. For all other spaces we will use appropriate 
subscripts to denote the corresponding norms. Note that we have the Poincar\'{e} inequality 
$\lambda_1\|v\|_H^2\leq \|v\|^2$ for $v\in V$, where $\lambda_1$ is the first eigenvalue of the Laplace 
operator. The scalar product in $H$ will be denoted by $(\cdot,\cdot)$. Moreover, for a multifunction 
$\xi:\Omega\to 2^\br$ we will use the notation $S^2_\xi$ for all its selections that belong to $H$. Define $f(t)\in V^*$ as
$$
\langle f(t),v\rangle=\int_{\Omega} f_0(x,t)v(x)\, \d x+ \int_{\Gamma_N}f_N(x,t)v(x)\, \d\Gamma\qquad\mbox{for}\ v\in V,
$$
and the mapping $A:V\to V^*$ as
$$
\langle Au,v\rangle = \int_{\Omega}\nabla u(x)\cdot \nabla v(x)\, \d x.
$$
Then $\langle Av,v\rangle = \|v\|^2$ for $v\in V$. We consider the following weak formulation of the above problem.

\medskip

\noindent \textbf{Problem $(\C{P})$}. Find $u\in L^2_{loc}(t_0,\infty;V)$ with $u'\in L^2_{loc}(t_0,\infty;V^*)$ and $\eta\in L^2_{loc}(t_0,\infty;H)$ such that
\begin{align}
&\langle u'(t),v\rangle + \langle A u(t), v\rangle + (\eta(t),v) = \langle f(t),v\rangle \ \ \mbox{for all}\ v\in V\ \mbox{a.e.}\ t\in (t_0,\infty)\label{eq:weak_form}\\
& u(t_0) = u_0,\\
& \eta(t) \in S^2_{\partial j(\cdot,t,u(\cdot,t))}\ \ \mbox{a.e.}\ t\in (t_0,\infty)\label{eq:selection}.
\end{align}
\begin{remark} In place of the Laplace operator in Problem $(\C{P})$ it is possible to consider a linear, 
continuous, positively defined and symmetric operator. In the sequel of the proof it will be essential that 
this operator has an orthonormal in $H$ and orthogonal in $V$ basis of eigenfunctions such that the 
corresponding eigenvalues $0 < \lambda_1 \leq \lambda_2\leq\ldots\leq\lambda_n\leq\ldots$ go to infinity as $n\to \infty$.
\end{remark}
\begin{remark}
Consider the equation
\begin{equation}\label{eq:discontinuous}
 u_t(x,t)-\Delta u(x,t) + h(x,t,u(x,t))=f_0(x,t),
 \end{equation}
 where $h(x,t,\cdot)\in L^\infty_{loc}(\br)$ can be discontinuous. Then we can define the functional 
$j(x,t,u)=\int_0^u h(x,t,s)\, \d s$. This functional is locally Lipschitz with respect to the variable $u\in \br$ and we can 
consider its Clarke subdifferential with respect to this variable. Inclusion (\ref{eq:inclusion}) can 
be understood as a multivalued regularization of (\ref{eq:discontinuous}) that guarantees the existence 
of weak solutions assuming that the growth condition \ref{J2} stated below holds.
\end{remark}

\subsection{Assumptions and existence of weak solutions} We make the following assumptions on the problem data.

\begin{enumerate}[label=(J\arabic*)]
	\item \label{J1} The functional $j:\Omega\times \br\times\br\to \br$ is such that for all $s\in\br$ 
	the mapping $(x,t)\to j(x,t,s)$ is measurable and for almost all $(x,t)\in \Omega\times \br$ 
	the mapping $s\to j(x,t,s)$ is locally Lipschitz. 
	
	\item \label{J2} The following growth condition holds:  for almost all $(x,t)\in \Omega\times \br$ and for all 
	$s\in \br$, $\xi\in \partial j(x,t,s)$ we have $|\xi|\leq c_1+c_2|s|$ with constants $c_1,c_2>0$.
	
	\item \label{J3} The following dissipativity condition holds: for almost all $(x,t)\in \Omega\times \br$ and 
	for all $s\in \br$, $\xi\in \partial j(x,t,s)$ we have $\xi s\geq d_1-d_2|s|^2$ with constants $d_1\in \br$ 
	and $d_2\in [0,\lambda_1)$.

\end{enumerate}
	
\begin{enumerate}[label=(F\arabic*)]	
	\item\label{F1} $f\in L^2_{loc}(\br;V^*)$.
	
	\item\label{F2} For some $\bar{t}\in\br$ we have $\|f\|_{L^\infty(-\infty,\bar{t};V^*)} < \infty$.
	
\end{enumerate}

\begin{remark}\label{rem:source_term}
Assumption \ref{F2} is stronger than the assumption that for all $t\in \br$ we have
$$
\int_{-\infty}^{t}\e^{\sigma s}\|f(s)\|_{V^*}^2\, \d s<\infty
$$ 
for certain $\sigma>0$ dependent on the problem data, used for example in \cites{gl-2006-CR, gl-2006-NATMA, GMRR}. This weaker assumption 
would lead to the notion of pullback $\C{D}$-attractor, where $\C{D}$ is an appropriately constructed family 
of bounded nonautonomous sets. See \cite{marin-real} for the comparison of the two notions of pullback attractors. 
Hypothesis \ref{F2} was used for example in \cite{langa-lukaszewicz-real}, where  estimates on the fractal dimension 
of the pullback attractor for two dimensional Navier-Stokes flows in unbounded domains were obtained. 
Moreover, \ref{F2} can be relaxed to the weaker assumption that for certain $\bar{t}\in \mathbb{R}$ we have 
$$
\sup_{t\leq \bar{t}} \e^{-\sigma t}\int_{-\infty}^{t}\e^{\sigma s}\|f(s)\|_{V^*}\, \d s < \infty,
$$ 
where $\sigma$ is a certain constant depending on the problem data (see \cite{marin-real} and proof of Lemma 
\ref{lemma:dissipative_a_priori} below, which is the only place where this assumption is used).
\end{remark}
\begin{theorem}
Let $t_0\in\br$ and $u_{0}\in H$. Under assumptions \ref{J1}-\ref{J2} and \ref{F1}, Problem $(\C{P})$ has a (possibly non-unique) solution.
\end{theorem}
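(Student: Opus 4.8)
The plan is to build a solution by a twofold approximation — first regularizing the locally Lipschitz potential $j$, then running a Galerkin scheme for the resulting single-valued problem — followed by two successive passages to the limit. For $\eps>0$ set $j_\eps(x,t,s)=\int_\br j(x,t,s-\eps\sigma)\rho(\sigma)\,\d\sigma$, where $\rho$ is a standard mollifier on $\br$. Then $j_\eps(x,t,\cdot)\in C^1(\br)$, the map $(x,t)\mapsto j_\eps'(x,t,s)$ is measurable, and since $j_\eps'(x,t,s)$ is a convex average of values taken by $\partial j(x,t,\cdot)$ on $[s-\eps,s+\eps]$, assumption \ref{J2} gives the bound $|j_\eps'(x,t,s)|\le c_1+c_2(|s|+\eps)$, uniform in $\eps$. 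The regularized problem is to find $u_\eps$ with $u_\eps'+Au_\eps+j_\eps'(\cdot,t,u_\eps(\cdot))=f(t)$ in $V^*$ for a.e.\ $t$ and $u_\eps(t_0)=u_0$.

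For fixed $\eps$ I would solve this by Galerkin projection onto $\spa\{w_1,\dots,w_n\}$, where $\{w_k\}$ is the basis of eigenfunctions of $A$, orthonormal in $H$ and orthogonal in $V$ (cf.\ the Remark above): the nonlinearity being continuous in $u$, measurable in $t$ and of linear growth, a Carathéodory local solution $u_{\eps,n}$ exists. Testing with $u_{\eps,n}$, using $\langle Av,v\rangle=\|v\|^2$, the growth of $j_\eps'$, the Poincaré inequality and Young's inequality yields
\[
\tfrac12\tfrac{\d}{\d t}\|u_{\eps,n}(t)\|_H^2+\tfrac12\|u_{\eps,n}(t)\|^2\le C\bigl(1+\|f(t)\|_{V^*}^2\bigr)+C\|u_{\eps,n}(t)\|_H^2,
\]
so Gronwall's lemma provides, on every $[t_0,T]$, bounds on $u_{\eps,n}$ in $L^\infty(t_0,T;H)\cap L^2(t_0,T;V)$, and then on $u_{\eps,n}'$ in $L^2(t_0,T;V^*)$, all independent of $n$ and $\eps$; in particular the Galerkin solution is global. (Only \ref{J1}, \ref{J2} and \ref{F1} are used here; \ref{J3} and \ref{F2} enter later, for dissipativity, not for existence.) Passing to a subsequence with $u_{\eps,n}\tow u_\eps$ in $L^2(t_0,T;V)$, $u_{\eps,n}'\tow u_\eps'$ in $L^2(t_0,T;V^*)$ and, by the Aubin--Lions lemma (the embedding $V\subset H$ being compact), $u_{\eps,n}\to u_\eps$ in $L^2(t_0,T;H)$ and a.e.\ on $Q_T=\Om\times(t_0,T)$, continuity of $j_\eps'(x,t,\cdot)$ together with its linear growth makes the corresponding Nemytskii operator continuous on $L^2(Q_T)$, so $j_\eps'(\cdot,\cdot,u_{\eps,n})\to j_\eps'(\cdot,\cdot,u_\eps)$ in $L^2(Q_T)$; passing to the limit in the Galerkin identities and recovering $u_\eps(t_0)=u_0$ from $u_\eps\in C([t_0,T];H)$ gives a solution of the regularized problem.

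The next step is to let $\eps\to0$. The estimates above hold for $u_\eps$ uniformly in $\eps$, and $\eta_\eps:=j_\eps'(\cdot,\cdot,u_\eps)$ is bounded in $L^2(Q_T)$ by \ref{J2}; hence, along a subsequence, $u_\eps\tow u$ in $L^2(t_0,T;V)$, $u_\eps'\tow u'$ in $L^2(t_0,T;V^*)$, $u_\eps\to u$ in $L^2(t_0,T;H)$ and a.e.\ on $Q_T$, and $\eta_\eps\tow\eta$ in $L^2(Q_T)$. The linear terms and the source term pass to the limit immediately, giving \eqref{eq:weak_form} with this $\eta$, and $u(t_0)=u_0$ follows as before. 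Since $T>t_0$ is arbitrary, a diagonal extraction over $T=t_0+m$, $m\in\bn$, yields $u\in L^2_{loc}(t_0,\infty;V)$ with $u'\in L^2_{loc}(t_0,\infty;V^*)$ and $\eta\in L^2_{loc}(t_0,\infty;H)$.

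The one genuinely delicate point — and the main obstacle — is the selection property \eqref{eq:selection}, i.e.\ $\eta(x,t)\in\partial j(x,t,u(x,t))$ a.e.: $\eta$ is only a weak $L^2$-limit of the $\eta_\eps$, whose values lie (up to the $O(\eps)$ mollification) in $\partial j$ evaluated near $u_\eps(x,t)$, while $u_\eps\to u$ merely almost everywhere. I would resolve this by the standard convergence theorem for the Clarke subdifferential. Using the characterization $\xi\in\partial j(x,t,s)\iff \xi r\le j^0(x,t,s;r)$ for all $r\in\br$, the upper semicontinuity of $(x,t,s)\mapsto j^0(x,t,s;r)$ (from \ref{J1}) and the domination furnished by \ref{J2}, one passes to the limit in the identity $\int_{Q_T}\eta_\eps v=\int_{Q_T}j_\eps'(\cdot,\cdot,u_\eps)\,v$ valid for every $v\in L^2(Q_T)$: the left side tends to $\int_{Q_T}\eta v$ by weak convergence, while on the right, since $u_\eps\to u$ a.e.\ and $\partial j(x,t,\cdot)$ is upper semicontinuous with compact values, $\limsup_{\eps\to0}j_\eps'(x,t,u_\eps(x,t))\,v(x,t)\le j^0(x,t,u(x,t);v(x,t))$ a.e., so the reverse Fatou lemma gives $\int_{Q_T}\eta v\le\int_{Q_T}j^0(x,t,u;v)$. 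Taking $v$ of the form $r\,\mathbf 1_E$ over measurable sets $E$ and using Lebesgue differentiation then yields $\eta(x,t)\,r\le j^0(x,t,u(x,t);r)$ a.e.\ for each rational $r$, hence for all $r\in\br$ by continuity, which is precisely $\eta(x,t)\in\partial j(x,t,u(x,t))$ a.e. This completes the proof.
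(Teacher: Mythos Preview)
Your proof is correct and follows essentially the same strategy as the paper: mollify $j$, run a Galerkin scheme, obtain uniform energy bounds from \ref{J2}, pass to the limit by compactness, and identify the weak limit of the nonlinear term via the Fatou/$j^0$ argument. The only organizational difference is that the paper couples the mollification and Galerkin parameters into a single index and passes to the limit once (then concatenates over unit time intervals), whereas you take the two limits separately and diagonalize over the time horizon; both variants are standard and lead to the same conclusion.
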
 
\begin{proof}
We will prove the existence of solution $u$ on the interval $(t_0,t_0+1)$. This solution will be proved to belong to the 
space $\{v\in L^2(t_0,t_0+1;V)\, :\, v'\in L^2(t_0,t_0+1;V^*)\}$. Since this space embeds in $C([t_0,t_0+1];H)$, we can 
take the solution's endpoint $u(t_0+1)$ and use it as an initial condition for a new trajectory on the interval $(t_0+1,t_0+2)$. 
The concatenation of these two trajectories will be a solution on $(t_0,t_0+2)$. Proceeding recursively, we obtain the 
solution on the whole interval $(t_0,\infty)$. The proof of existence on $(t_0,t_0+1)$ is based on the arguments of \cite{miettinen} 
and hence it will be sketched only briefly here. Let $\varrho\in C^\infty_0(\br)$ be a mollifier kernel such that 
$\supp\, \varrho\subset [-1,1]$, $\varrho(s)\geq 0$ for all $s\in \br$ and $\int_{\br}\varrho(s)\, \d s=1$. Define $\varrho_n(s)=n\varrho(ns)$ 
and put
$$
j_n(x,t,s)=\int_{\br}\varrho_n(\eta) j(x,t,s-\eta)\, \d\eta.
$$
By $j_n'(x,t,s)$ we will denote the derivative of $j_n$ with respect to the variable $s$. Note that $j_n'$ satisfies 
the growth condition \ref{J2} with constants $c_1,c_2$ different than those for $\partial j$ but independent 
on $n$. Let $V_n=\spa\{z_1,\ldots,z_n\}$, where $\{z_n\}_{n=1}^\infty$ are orthonormal in $H$ and orthogonal 
in $V$ eigenfunctions of the operator $A$ corresponding to the eigenvalues $0<\lambda_1\leq \lambda_2\leq \ldots \leq \lambda_n \leq \ldots$. 
The projection operator on $V_n$ will be denoted by $P_n$. Note that we have $\|P_n v\|_{X}\leq \|v\|_X$ and 
$P_n v\to v$ in $X$ for all $v\in X$, where $X=V, H,$ or $V^*$ (see Lemma 7.5 in \cite{robinson}). We formulate 
the following regularized Galerkin problem

\medskip

\noindent \textbf{Problem $(\C{P})_n$}. Find $u_n\in AC(t_0,t_0+1;V_n)$ such that $u_n(t_0)=P_nu_0$ and 
for almost every $t\in (t_0,t_0+1)$ and all $v\in V_n$ we have
\begin{equation}\label{eq:galerkin_problem}
\langle u_n'(t) + A u_n(t),v\rangle + \int_{\Omega}j_n'(x,t,u_n(x,t))v(x)\, \d x = \langle P_nf(t), v\rangle.
\end{equation}
If $u_n$ solves Problem $(\C{P})_n$, then, taking $v=u_n(t)$ and using \ref{J2} we have for a.e. $t\in (t_0,t_0+1)$

\begin{align*}
\frac{1}{2}\frac{\d}{\d t}\|u_n(t)\|_H^2 + \|u_n(t)\|^2 &\leq \|f(t)\|_{V^*}\|u_n(t)\| + \int_{\Omega}c_1|u_n(x,t)|+c_2|u_n(x,t)|^2\,\d x \\
& \leq \frac{\|u_n(t)\|^2}{2}+\frac{\|f(t)\|_{V^*}^2}{2} + \left(c_2+\frac{1}{2}\right) \|u_n(t)\|_H^2 + \frac{c_1^2}{2}m(\Omega), 
\end{align*}
whence, for all $t\in [t_0,t_0+1]$ we get
$$
\|u_n(t)\|_H^2 \leq \|u_0\|_H^2 + \|f\|_{L^2(t_0,t_0+1;V^*)}^2 + c_1^2 m(\Omega) + (2c_2+1)\int_{t_0}^t \|u_n(\tau)\|_H^2\, \d \tau.
$$
As a consequence, from the Gronwall lemma, we get the bound
\begin{equation}\label{eq:Galerkin_bound_1}
\|u_n\|_{L^2(t_0;t_0+1;V)}+\|u_n\|_{C([t_0,t_0+1];H)}\leq C(\|u_0\|_H, \|f\|_{L^2(t_0,t_0+1;V^*)}, c_1, c_2, \Omega).
\end{equation}
This bound and the Carath\'{e}odory theorem guarantee existence of solutions for Problem $(\C{P})_n$. In a standard way, the growth condition \ref{J2} and (\ref{eq:Galerkin_bound_1}) give the bound 
\begin{equation}\label{eq:Galerkin_bound_2}
\|u_n'\|_{L^2(t_0;t_0+1;V^*)}\leq C(\|u_0\|_H, \|f\|_{L^2(t_0,t_0+1;V^*)}, c_1, c_2, \Omega).
\end{equation}
We deduce that, for a subsequence, still denoted by $n$ 
\begin{align}
& u_n \to u\ \mbox{weakly in}\ L^2(t_0,t_0+1;V) \ \mbox{and weakly-* in}\ L^\infty(t_0,t_0+1;H),\label{eq:gal_convergence_1}\\
& u'_n \to u'\ \mbox{weakly in}\ L^2(t_0,t_0+1;V^*),\\
& u_n \to u\ \mbox{strongly in}\ L^2(t_0,t_0+1;H),
\end{align}
where $u\in L^2(t_0,t_0+1;V)$ with $u'\in L^2(t_0,t_0+1;V^*)$. Denote $\Omega_T=\Omega\times (t_0,t_0+1)$. Note that, perhaps for another subsequence, we have
\begin{equation}
\label{eq:for_fatou}
u_n(x,t)\to u(x,t)\ \mbox{a.e.}\ (x,t)\in \Omega_T\ \mbox{with}\ |u_n(x,t)|\leq U(x,t), \quad U\in L^2(\Omega_T).
\end{equation}
 In a standard way it follows that $u(t_0)=u_0$. Moreover, by the growth condition \ref{J2} and the previous bounds, for yet another subsequence, we have
\begin{equation}
j_n'(x,t,u_n(x,t))\to \xi \ \mbox{weakly in}\ L^2(t_0,t_0+1;H).\label{eq:gal_convergence_5}
\end{equation}
The above convergences (\ref{eq:gal_convergence_1})-(\ref{eq:gal_convergence_5}) allow us to pass to the 
limit in (\ref{eq:galerkin_problem}) and obtain for all $v\in V$ and a.e. $t\in (t_0,t_0+1)$
$$
\langle u'(t)+Au(t) ,v\rangle + \int_{\Omega}\xi(x,t)v(x)\, \d x = \langle f(t),v\rangle.
$$
It is sufficient to show that $\xi(x,t)\in \partial j(x,t,u(x,t))$ a.e. $(x,t)\in \Omega_T$. From (\ref{eq:gal_convergence_5}) it follows that $j_n'(x,t,u_n(x,t))\to \xi$ weakly in $L^1(\Omega_T)$, and hence for $w\in L^{\infty}(\Omega_T)$ we have
$$
\int_{\Omega_T}\xi(x,t) w(x,t)\, \d x\,\d t = \lim_{n\to\infty}\int_{\Omega_T}j_n'(x,t,u_n(x,t))w(x,t)\, \d x\,\d t.
$$ 
By the growth condition \ref{J2} and by the bound (\ref{eq:for_fatou}) we can invoke the Fatou Lemma to get
$$
\int_{\Omega_T}\xi(x,t) w(x,t)\, \d x\,\d t \leq \int_{\Omega_T}\limsup_{n\to\infty,\lambda\to 0^+}\frac{j_n(x,t,u_n(x,t)+\lambda w(x,t))-j_n(x,t,u_n(x,t))}{\lambda}\, \d x\,\d t.
$$ 
We can estimate the last integrand for a.e. $(x,t)\in \Omega_T$ as follows
\begin{align*}
& \limsup_{n\to\infty,\lambda\to 0^+}\frac{j_n(x,t,u_n(x,t)+\lambda w(x,t))-j_n(x,t,u_n(x,t))}{\lambda}\\
& \qquad= \limsup_{n\to\infty,\lambda\to 0^+}\int_{-\frac{1}{n}}^{\frac{1}{n}}\varrho_n(s)\frac{j(x,t,u_n(x,t)+\lambda w(x,t)-s)-j_n(x,t,u_n(x,t)-s)}{\lambda} \, \d s\\
& \qquad\leq \limsup_{n\to\infty,\lambda\to 0^+} \sup_{s\in \left[-\frac{1}{n},\frac{1}{n}\right]}\frac{j(x,t,u_n(x,t)+\lambda w(x,t)-s)-j_n(x,t,u_n(x,t)-s)}{\lambda}\\
& \qquad\leq \limsup_{z \to u(x,t),\lambda\to 0^+} \frac{j(x,t,z+\lambda w(x,t))-j_n(x,t,z)}{\lambda} = j^0(x,t,u(x,t);w(x,t)).
\end{align*}
Hence,
$$
\int_{\Omega_T}\xi(x,t) w(x,t)\, \d x\,\d t \leq \int_{\Omega_T}j^0(x,t,u(x,t);w(x,t))\, \d x\,\d t.
$$
Since the choice of $w$ is arbitrary, from the definition of Clarke subgradient we get
$$
\xi(x,t)\in \partial j(x,t,u(x,t))\ \mbox{a.e.}\ (x,t)\in \Omega_T,
$$
and the proof is complete.
\end{proof}

The mapping $H\ni u_0\to u(t)\in H$ for $t\geq t_0$ defines a 
strict m-process in $H$. Namely, for any $t_0\leq t\in \br$ we define
\begin{equation}\label{eq:reactmulti}
U(t,t_0;u_0)=\left\{u(t): u(\cdot) \text{ is a solution to Problem $(\C{P})$ with } u(t_0)=u_0\right\}.
\end{equation}
We will now study the asymptotic behavior of the above multivalued nonautonomous dynamical system,
proving the existence of a backward bounded pullback attractor, which is fully invariant under the flow.

\subsection{Dissipative a priori estimates}

We start by the dissipative properties of our system, proving the existence of a pullback absorbing nonautonomous
set.
\begin{lemma}\label{lemma:dissipative_a_priori}
Under assumptions \ref{J1}-\ref{J3} and \ref{F1}-\ref{F2} the m-process $U:\br_d\times H\to P(H)$ defined by \eqref{eq:reactmulti} 
is monotonically dissipative.
\end{lemma}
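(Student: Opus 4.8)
The plan is to derive the standard energy estimate for Problem $(\C{P})$, carefully tracking the dependence on the initial time and on $\|f\|_{L^\infty(-\infty,\bar t;V^*)}$, and then to read off a bounded, monotone (in time) absorbing nonautonomous set, so that monotonic dissipativity follows directly from Definition \ref{dissipative}. First I would test the weak formulation \eqref{eq:weak_form} with $v=u(t)$, which is admissible since $u\in L^2_{loc}(t_0,\infty;V)$ with $u'\in L^2_{loc}(t_0,\infty;V^*)$, so that $t\mapsto \|u(t)\|_H^2$ is absolutely continuous and $\langle u'(t),u(t)\rangle=\tfrac12\tfrac{\d}{\d t}\|u(t)\|_H^2$. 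Using $\langle Au(t),u(t)\rangle=\|u(t)\|^2$, the selection property \eqref{eq:selection} together with the dissipativity assumption \ref{J3} (which gives $(\eta(t),u(t))\geq d_1 m(\Omega)-d_2\|u(t)\|_H^2$), and the Cauchy--Schwarz and Young inequalities on the right-hand side, one obtains an inequality of the form
$$
\frac{\d}{\d t}\|u(t)\|_H^2 + \|u(t)\|^2 \leq 2 d_2\|u(t)\|_H^2 + \frac{1}{\lambda_1}\|f(t)\|_{V^*}^2 - 2 d_1 m(\Omega).
$$
Since $d_2<\lambda_1$, the Poincar\'e inequality $\lambda_1\|u(t)\|_H^2\leq \|u(t)\|^2$ lets me absorb the bad term: setting $\sigma=\lambda_1-d_2>0$ (or any constant slightly smaller), I get $\tfrac{\d}{\d t}\|u(t)\|_H^2 + \sigma\|u(t)\|_H^2 \leq \tfrac{1}{\lambda_1}\|f(t)\|_{V^*}^2+C_0$ for a constant $C_0$ depending on $d_1$ and $\Omega$.

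Next I would integrate this differential inequality via Gronwall's lemma between $\tau$ and $t$ with $\tau\leq t\leq \bar t$, obtaining
$$
\|u(t)\|_H^2 \leq \e^{-\sigma(t-\tau)}\|u_0\|_H^2 + \int_\tau^t \e^{-\sigma(t-s)}\left(\frac{1}{\lambda_1}\|f(s)\|_{V^*}^2+C_0\right)\,\d s.
$$
Because of assumption \ref{F2}, for $t\leq\bar t$ the integral is bounded by $\tfrac{1}{\sigma}\big(\tfrac{1}{\lambda_1}\|f\|_{L^\infty(-\infty,\bar t;V^*)}^2+C_0\big)=:R_0^2$, a constant independent of $t$ and $\tau$. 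For $t>\bar t$ one simply splits the integral at $\bar t$ and uses \ref{F1} on the finite interval $[\bar t,t]$, producing a bound $R(t)^2$ depending on $t$ but nondecreasing in $t$. As a consequence, given any $E\in\C{B}(H)$ with $\|u_0\|_H\leq r$ for all $u_0\in E$, the term $\e^{-\sigma(t-\tau)}r^2\to 0$ as $\tau\to-\infty$, so there exists $\bar\tau(t,E)\leq t$ with $\e^{-\sigma(t-\tau)}r^2\leq 1$ for all $\tau\leq\bar\tau$; hence $U(t,\tau;E)\subset B_H(0, (R(t)^2+1)^{1/2})$ for all $\tau\leq\bar\tau$. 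Defining $B_0(t)$ to be this closed ball and observing that $R(t)$ is nondecreasing, the nonautonomous set $\bb_0=\{B_0(t):t\in\br\}$ is bounded, satisfies $B_0(s)\subset B_0(t)$ for $s\leq t$, and absorbs every bounded set; this is precisely monotonic dissipativity.

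The only genuinely delicate point is the handling of the remark after \ref{F2}: to accommodate the relaxed hypothesis $\sup_{t\leq\bar t}\e^{-\sigma t}\int_{-\infty}^t\e^{\sigma s}\|f(s)\|_{V^*}\,\d s<\infty$, one must not square $\|f(s)\|_{V^*}$ in Young's inequality but rather keep the estimate first order in $f$, using $\langle f(s),u(s)\rangle\leq\|f(s)\|_{V^*}\|u(s)\|\leq\tfrac14\|u(s)\|^2+\|f(s)\|_{V^*}^2$ only when convenient, and otherwise bounding $\langle f(s),u(s)\rangle$ by $\|f(s)\|_{V^*}(1+\|u(s)\|_H^2)$ after controlling $\|u(s)\|_H$ on an already-absorbed trajectory; this is the bootstrapping argument alluded to in Remark \ref{rem:source_term}. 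Since the lemma as stated only requires \ref{F2}, I would present the clean $L^\infty$ argument above and merely indicate the modification. A secondary technical caveat is that a solution $u(\cdot)$ in \eqref{eq:reactmulti} is defined as a concatenation of solutions on unit intervals, but the energy inequality is proved on each such interval and the endpoint values match in $H$, so the estimate propagates; I would note this explicitly to justify applying Gronwall across $(\tau,t)$.
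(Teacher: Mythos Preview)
Your approach is essentially the same as the paper's: test \eqref{eq:weak_form} with $v=u(t)$, invoke \ref{J3}, apply Young and Poincar\'e, integrate via Gronwall, split the integral of $\|f\|_{V^*}^2$ at $\bar t$ using \ref{F2} and \ref{F1}, and read off a nondecreasing radius $R(t)$ so that the balls $B_0(t)=B_H(0,R(t))$ form a monotone absorbing family. The structural steps match the paper's proof line by line.

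One small arithmetic slip to fix: from your displayed inequality $\tfrac{\d}{\d t}\|u\|_H^2+\|u\|^2\le 2d_2\|u\|_H^2+\cdots$, Poincar\'e only yields $\|u\|^2-2d_2\|u\|_H^2\ge(\lambda_1-2d_2)\|u\|_H^2$, so you would need $d_2<\lambda_1/2$, not merely $d_2<\lambda_1$ as in \ref{J3}. The paper avoids this by choosing the Young parameter $\eps=\tfrac12-\tfrac{d_2}{2\lambda_1}$ (rather than $\eps=\tfrac12$), which leaves $(1-\tfrac{d_2}{\lambda_1}-\eps)\|u\|^2=\tfrac12(1-\tfrac{d_2}{\lambda_1})\|u\|^2$ on the left and works for the full range $d_2\in[0,\lambda_1)$. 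With that adjustment your argument goes through exactly as the paper's does.
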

\begin{proof}
Take $v=u(t)$ in (\ref{eq:weak_form}). We get, by \ref{J3}
$$
\frac{1}{2}\frac{\d}{\d t}\|u(t)\|_H^2 + \|u(t)\|^2 + \int_{\Omega} d_1 - d_2 |u(x,t)|^2\, \d x \leq \eps \|u(t)\|^2 + \frac{1}{4\eps}\|f(t)\|_{V^*}^2,
$$
for a.e. $t\in (t_0,\infty)$ with arbitrary $\eps>0$. We have
$$
\frac{1}{2}\frac{\d}{\d t}\|u(t)\|_H^2 + \left(1 - \frac{d_2}{\lambda_1}-\eps\right)\|u(t)\|^2 \leq  \frac{1}{4\eps}\|f(t)\|_{V^*}^2 - d_1 m(\Omega).
$$
We take $\eps=\frac{1}{2}-\frac{d_2}{2\lambda_1}$ and we obtain 
$$
\frac{\d}{\d t}\|u(t)\|_H^2 + C_1\|u(t)\|_H^2 \leq C_2\|f(t)\|_{V^*}^2 + C_3,
$$
with $C_1,C_2,C_3>0$ for a.e. $t\in (t_0,\infty)$, whence, in a standard way, it follows that for all $t\geq t_0$ we have
$$
\|u(t)\|_H^2 \leq \e^{-C_1(t-t_0)}\|u(t_0)\|_H^2 + \frac{C_3}{C_1}+C_2\e^{-C_1t}\int_{t_0}^t\e^{C_1s}\|f(s)\|_{V^*}^2\, \d s.
$$
We can assume that $t_0\leq\bar{t}$, where $\bar{t}$ is given in \ref{F2}. If $t\geq\bar{t}$ we get
$$
\e^{-C_1t}\int_{t_0}^t\e^{C_1s}\|f(s)\|_{V^*}^2\, \d s \leq \frac{1}{C_1}\  \esssup_{s<\bar{t}}\|f(s)\|_{V^*}^2 + \e^{-C_1\bar{t}}\int_{\bar{t}}^t\e^{C_1s}\|f(s)\|_{V^*}^2\, \d s:=F(t),
$$
whereas, if $t<\bar{t}$, 
$$
\e^{-C_1t}\int_{t_0}^t\e^{C_1s}\|f(s)\|_{V^*}^2\, \d s \leq \frac{1}{C_1} \esssup_{s<\bar{t}}\|f(s)\|_{V^*}^2:=F(t).
$$
Note that $F:\br\to [0,\infty)$ is a nondecreasing function, and
$$
\|u(t)\|_H^2 \leq \e^{-C_1(t-t_0)}\|u(t_0)\|_H^2 + \frac{C_3}{C_1}+C_2F(t).
$$
Define $R(t) = \sqrt{\frac{C_3}{C_1}+C_2F(t)}+1$. This function is nondecreasing. 
Moreover, the ball $B(0,R(t))$ is absorbing in $H$, and the proof is complete.
The entering time $\bar{\tau}(t,E)$ in the definition of dissipativity can be explicitly computed 
as 
$$
\bar{\tau}(t,E)=\min\left\{t-1, t-\frac{2}{C_1}\ln\|E\|_H,\bar{t}\right\}.
$$
\end{proof}

\subsection{Pullback flattening condition} 
To complete the proof of the existence of a pullback attractor, we prove that the m-process governed by  Problem $(\C{P})$ is pullback flattening.
\begin{lemma}\label{lemma:pullback_flattening}
Under assumptions \ref{J1}-\ref{J3} and \ref{F1}-\ref{F2}  the m-process $U:\br_d\times H\to P(H)$ defined by \eqref{eq:reactmulti} is pullback flattening.
\end{lemma}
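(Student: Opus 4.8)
The plan is to combine the spectral gap of the operator $A$ with the dissipative estimate obtained in Lemma~\ref{lemma:dissipative_a_priori}. Let $P_n$ be the orthogonal projection of $H$ onto $V_n=\spa\{z_1,\dots,z_n\}$ and write $Q_n=I-P_n$; since $\{z_k\}$ is orthonormal in $H$ and orthogonal in $V$, one has $\langle Av,Q_nv\rangle=\|Q_nv\|^2\geq\lambda_{n+1}\|Q_nv\|_H^2$ and $\|Q_nv\|_H^2\leq\lambda_{n+1}^{-1}\|Q_nv\|^2$ for $v\in V$, as well as $\|Q_ng\|_{V^*}\leq\|g\|_{V^*}$ and $\|Q_ng\|_{V^*}\to0$ as $n\to\infty$ for every fixed $g\in V^*$. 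Fix $t\in\br$, $B\in\C{B}(H)$ and $\eps>0$. Using the pointwise-in-time bound derived in the proof of Lemma~\ref{lemma:dissipative_a_priori}, namely $\|u(r)\|_H^2\leq\e^{-C_1(r-s)}\|u(s)\|_H^2+\frac{C_3}{C_1}+C_2F(r)$ for $r\geq s$, together with the monotonicity of $F$, I would first fix $\tau=\tau(t,B)\leq\min\{\bar t,t-2\}$ negative enough that every solution $u$ of Problem~$(\C{P})$ with $u(s)\in B$ and $s\leq\tau$ satisfies the uniform bound $\sup_{r\in[t-1,t]}\|u(r)\|_H\leq\rho$, with $\rho=\rho(t,B)$ independent of $s$ and $n$.

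The core of the argument is a high-mode energy estimate. Testing \eqref{eq:weak_form} with $v=Q_nu(r)$ (legitimate since $Q_nu$ shares the regularity of $u$; alternatively, the inequality below is first obtained on the Galerkin approximations of the existence proof and then passed to the limit via weak lower semicontinuity of $\|\cdot\|_H$), using $\langle Au(r),Q_nu(r)\rangle=\|Q_nu(r)\|^2$, the growth condition \ref{J2} in the form $\|\eta(r)\|_H\leq c_1\sqrt{m(\Om)}+c_2\|u(r)\|_H$, Young's inequality and $\|Q_nu(r)\|_H^2\leq\lambda_{n+1}^{-1}\|Q_nu(r)\|^2$, I expect to reach
$$
\frac{\d}{\d r}\|Q_nu(r)\|_H^2+\lambda_{n+1}\|Q_nu(r)\|_H^2\leq 2\|Q_nf(r)\|_{V^*}^2+\frac{2}{\lambda_{n+1}}\|\eta(r)\|_H^2,\qquad r\in(t-1,t).
$$
Integrating on $[t-1,t]$ and discarding the exponential weight on the right-hand side gives
$$
\|Q_nu(t)\|_H^2\leq \e^{-\lambda_{n+1}}\rho^2+2\int_{t-1}^t\|Q_nf(r)\|_{V^*}^2\,\d r+\frac{2}{\lambda_{n+1}}\int_{t-1}^t\|\eta(r)\|_H^2\,\d r.
$$

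It remains to check that the right-hand side is small, uniformly over all admissible $u$, once $n$ is large. The first term tends to $0$ since $\lambda_n\to\infty$ and $\|Q_nu(t-1)\|_H\leq\rho$. For the third term, \ref{J2} and $\|u(r)\|_H\leq\rho$ on $[t-1,t]$ yield $\int_{t-1}^t\|\eta(r)\|_H^2\,\d r\leq 2c_1^2m(\Om)+2c_2^2\rho^2=:C(t,B)$, so it is bounded by $2C(t,B)/\lambda_{n+1}\to0$. For the second term, $\|Q_nf(r)\|_{V^*}\leq\|f(r)\|_{V^*}\in L^2(t-1,t)$ by \ref{F1}, while $\|Q_nf(r)\|_{V^*}\to0$ for a.e.\ $r$, so dominated convergence gives $\int_{t-1}^t\|Q_nf(r)\|_{V^*}^2\,\d r\to0$. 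Hence there is $N=N(t,B,\eps)$ such that $\|Q_Nu(t)\|_H\leq\eps$ for every solution $u$ with $u(s)\in B$ and $s\leq\tau$, that is,
$$
(I-P_N)\Big(\bigcup_{s\leq\tau}U(t,s;B)\Big)\subset B(0,\eps),
$$
while $P_N\big(\bigcup_{s\leq\tau}U(t,s;B)\big)\subset B(0,\rho)$ is bounded. With $E=V_N$ and $P_E=P_N$ this is exactly the pullback flattening property. The one delicate point is the rigorous derivation of the high-mode differential inequality — the time-dependent test function, or equivalently the passage to the limit on the Galerkin level — and making sure every constant there is genuinely independent of the starting time $s$; this uniformity is precisely what the exponential dissipativity estimate of Lemma~\ref{lemma:dissipative_a_priori} supplies, and everything else is routine manipulation with Young's inequality and the spectral gap $\lambda_{n+1}$.
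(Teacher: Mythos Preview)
Your argument is correct and follows the same overall strategy as the paper: project onto high modes via the spectral decomposition of $A$, derive a differential inequality for $\|Q_nu\|_H^2$ using the spectral gap $\lambda_{n+1}$, and feed in the uniform bound $\|u(r)\|_H\leq\rho$ coming from Lemma~\ref{lemma:dissipative_a_priori}.

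There are two minor technical differences worth noting. First, you estimate the multivalued term by $\|\eta\|_H\|Q_nu\|_H\leq\tfrac14\|Q_nu\|^2+\lambda_{n+1}^{-1}\|\eta\|_H^2$, so the $\eta$-contribution already carries a factor $\lambda_{n+1}^{-1}\to0$; the paper instead uses the cruder bound $\lambda_1^{-1}\|\eta\|_H^2$ and relies on a Translated Gronwall Lemma on an interval of length two to squeeze out smallness. Second, for the forcing term you keep the projected quantity $\|Q_nf\|_{V^*}$ and invoke dominated convergence (using $P_nf\to f$ in $V^*$ and \ref{F1}); the paper keeps the full $\|f\|_{V^*}$ and handles it by splitting the integration interval into a short piece near $t$ (made small by absolute continuity of the integral) and a longer piece killed by $\e^{-\lambda_{m+1}\delta}$. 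Your treatment is slightly more streamlined on both counts, but the substance is the same.
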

\begin{proof}
For $v\in V$ define $v_1=P_mv$ and $v_2=(I-P_m)v$. We take the duality in (\ref{eq:weak_form}) with $u_2(t)$ and we get for a.e. $t> t_0$
\begin{align*}
 \frac{1}{2}\frac{\d}{\d t}\|u_2(t)\|_H^2 + \|u_2(t)\|^2 \leq & \|f(t)\|_{V^*}\|u_2(t)\|+\|\eta(t)\|_H\|u_2(t)\|_H \\
 \leq &\frac{1}{2}\|u_2(t)\|^2 + \|f(t)\|_{V^*}^2 + \frac{1}{\lambda_1}\|\eta(t)\|_H^2,
\end{align*}
with $\eta(t)\in S^2_{\partial j(\cdot,t,u(\cdot,t))}$ for a.e. $t> t_0$. From the growth condition \ref{J2}, we obtain 
$$
\|\eta(t)\|_H^2\leq 2c_1^2 m(\Omega) + 2c_2^2 \|u(t)\|_H^2,
$$
for a.e. $t>t_0$. Using the Courant-Fischer formula $\lambda_{m+1}\|u_2(t)\|_H^2\leq\|u_2(t)\|^2 $, we get
$$
\frac{\d}{\d t}\|u_2(t)\|_H^2 + \lambda_{m+1}\|u_2(t)\|_H^2 \leq 2\|f(t)\|_{V^*}^2 + \frac{4c_1^2m(\Omega)}{\lambda_1}+\frac{4c_2^2}{\lambda_1}\|u(t)\|_H^2.
$$ 
From the Translated Gronwall Lemma proved in \cite{gl-2010} (see also Lemma 3.1 in \cite{lukaszewicz_kalita}) we get for all $t\geq t_0$
\begin{align*}
\|u_2(t+2)\|_H^2 \leq & \e^{-\lambda_{m+1}}\int_t^{t+1}\|u_2(s)\|_H^2\, \d s + 2\e^{-\lambda_{m+1}(t+2)}\int_{t}^{t+2}\e^{\lambda_{m+1}s}\|f(s)\|_{V^*}^2\,\d s\\
 & + \frac{4c_1^2m(\Omega)}{\lambda_1\lambda_{m+1}}+\frac{4c_2^2}{\lambda_1}\e^{-\lambda_{m+1}(t+2)} \int_{t}^{t+2}\e^{\lambda_{m+1}s}\|u(s)\|_{H}^2\,\d s.
\end{align*}
Taking $t_0\leq \bar{\tau}(t,E)$, where $\bar{\tau}(t,E)$ is given in Lemma \ref{lemma:dissipative_a_priori}, we have $\|u_2(s)\|_H\leq \|u(s)\|_H\leq R(t+2)$ for all $s\in [t,t+2]$, with $R(t)$ for $t\in\br$ given in the proof of Lemma \ref{lemma:dissipative_a_priori}. We get
\begin{align*}
\|u_2(t+2)\|_H^2 \leq &\, \e^{-\lambda_{m+1}}R^2(t+2) + \frac{4c_2^2R^2(t+2) + 4c_1^2m(\Omega) }{\lambda_1\lambda_{m+1}}\\
 & +  2\e^{-\lambda_{m+1}(t+2)}\int_{t}^{t+2}\e^{\lambda_{m+1}s}\|f(s)\|_{V^*}^2\,\d s.
\end{align*}
For small $\delta>0$ we have
$$
\e^{-\lambda_{m+1}(t+2)}\int_{t}^{t+2}\e^{\lambda_{m+1}s}\|f(s)\|_{V^*}^2\,\d s \leq \e^{-\lambda_{m+1}\delta}\int_{t}^{t+2}\|f(s)\|_{V^*}^2\,\d s + \int_{t+2-\delta}^{t+2}\|f(s)\|_{V^*}^2\,\d s.
$$
Hence, provided the initial condition, that belongs to the set $E\in\C{B}(H)$, is taken at $t_0\leq \bar{\tau}(t-2,E)$ we get for all $t\in \br$
\begin{align*}
\|u_2(t)\|_H^2 \leq & \e^{-\lambda_{m+1}}R^2(t) + \frac{4c_2^2R^2(t) + 4c_1^2m(\Omega) }{\lambda_1\lambda_{m+1}}\\
 & +  2\e^{-\lambda_{m+1}\delta}\int_{t-2}^{t}\|f(s)\|_{V^*}^2\,\d s + 2\int_{t-\delta}^{t}\|f(s)\|_{V^*}^2\,\d s.
\end{align*}
Having $t, \eps$ fixed we take $\delta>0$ sufficiently small such that $2\int_{t-\delta}^{t}\|f(s)\|_{V^*}^2\,\d s\leq \frac{\eps}{4}$. Next, we pick $m$ sufficiently large such that
$$
\e^{-\lambda_{m+1}}R^2(t) \leq \frac{\eps}{4},\ \ \frac{4c_2^2R^2(t) + 4c_1^2m(\Omega) }{\lambda_1\lambda_{m+1}}\leq \frac{\eps}{4},\ \  2\e^{-\lambda_{m+1}\delta}\int_{t-2}^{t}\|f(s)\|_{V^*}^2\,\d s\leq \frac{\eps}{4}.
$$
The proof is complete.
\end{proof}

As a consequence of Lemmata \ref{lemma:dissipative_a_priori} and \ref{lemma:pullback_flattening} we get, by Theorem \ref{thm:metricequiv2},
the following theorem on the existence of a pullback attractor.

\begin{theorem}\label{thm:attr_exists}
Under assumptions \ref{J1}-\ref{J3}  and \ref{F1}-\ref{F2}  the m-process $U:\br_d\times H\to P(H)$ defined by \eqref{eq:reactmulti} has a backward bounded pullback attractor.
\end{theorem}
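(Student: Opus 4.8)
The plan is to obtain the statement as an immediate corollary of the abstract machinery developed in Section \ref{sec:dissip}, feeding the two a priori estimate lemmas just proved into the equivalence theorem for uniformly convex spaces.

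First I would record that the phase space $H = L^2(\Omega)$ is a Hilbert space, hence a uniformly convex Banach space, so that Theorem \ref{thm:metricequiv2} applies verbatim to the strict m-process $U$ defined in \eqref{eq:reactmulti}. Next, Lemma \ref{lemma:dissipative_a_priori} supplies a bounded (indeed monotone, since $R(t)$ is nondecreasing) nonautonomous absorbing set $\bb_0 = \{B(0,R(t)): t\in\br\}$, so $U$ is monotonically dissipative --- precisely the standing hypothesis required by Theorem \ref{thm:metricequiv2}. Finally, Lemma \ref{lemma:pullback_flattening} shows that $U$ is pullback flattening, i.e. condition (4) in the list of Theorem \ref{thm:metricequiv2}. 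Invoking the equivalence (4) $\Leftrightarrow$ (5) in that theorem then yields condition (5): $U$ possesses a backward bounded pullback attractor, which is exactly the claim.

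I expect no genuine obstacle at this point: all the analysis has already been spent in the two lemmas --- the dissipative energy estimate built on \ref{J3} and \ref{F2} in Lemma \ref{lemma:dissipative_a_priori}, and the decay-of-high-modes estimate built on the spectral decomposition of $A$, the growth condition \ref{J2}, and the translated Gronwall lemma in Lemma \ref{lemma:pullback_flattening}. The only small care needed is to note that $H$ is uniformly convex so that the flattening condition is admissible in the equivalence; on a general complete metric space one would instead route through Theorem \ref{thm:metricequiv} via pullback $\omega$-limit compactness, but Lemma \ref{lem:flattening1} already furnishes that implication in any Banach space, so even that detour is automatic. (If one additionally wanted the full invariance $A(t) = U(t,s;A(s))$, not asserted here, it would suffice to verify that $U$ is $t_\star$-closed for some $t_\star>0$ and apply Corollary \ref{corollary:inv}, since $U$ is strict.)
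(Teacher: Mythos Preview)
Your proposal is correct and matches the paper's own proof exactly: the theorem is stated as an immediate consequence of Lemmata \ref{lemma:dissipative_a_priori} and \ref{lemma:pullback_flattening} via Theorem \ref{thm:metricequiv2}, using that $H=L^2(\Omega)$ is uniformly convex. There is nothing to add.
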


\subsection{Invariance of the pullback attractor} 
As we have see in the previous section, the invariance of the pullback attractor follows from the graph of $U$ being closed.
We do not use Corollary \ref{corollary:inv} in its full strength here, as the strict process $U$ happens to be $t_\star$-closed for any
arbitrary $t_\star>0$.

\begin{lemma}\label{lemma:inv}
Under assumptions \ref{J1}-\ref{J3}  and \ref{F1}-\ref{F2}   the m-process $U:\br_d\times H\to P(H)$ is closed.
\end{lemma}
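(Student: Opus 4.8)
The plan is to unwind the definition of closedness: I fix $t_\star>0$ and $t\in\br$, set $\tau=t-t_\star$, and show that $U(t,\tau;\cdot)$ has closed graph. So let $u_0^n\to u_0$ in $H$, let $u_n$ be a solution of Problem $(\C{P})$ on $[\tau,\infty)$ with $u_n(\tau)=u_0^n$ and associated selection $\eta_n$ (so $\eta_n(s)\in S^2_{\partial j(\cdot,s,u_n(\cdot,s))}$ for a.e.\ $s$), and assume $\xi_n:=u_n(t)\to\xi$ in $H$. The aim is to build a solution $u$ of $(\C{P})$ with $u(\tau)=u_0$ and $u(t)=\xi$: this gives $\xi\in U(t,\tau;u_0)$, hence closedness of the graph, and since $t_\star>0$ and $t\in\br$ were arbitrary, $U$ is closed. (Note that only \ref{J1}, \ref{J2} and \ref{F1} are actually used, since the whole argument lives on the bounded time interval $[\tau,t]$.)

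First I would collect uniform estimates. Since $\{u_0^n\}$ is bounded in $H$, repeating the energy estimate of the existence proof on the fixed interval $[\tau,t]$ (take $v=u_n(s)$ in \eqref{eq:weak_form}, bound $(\eta_n(s),u_n(s))$ via \ref{J2}, and apply Gronwall) yields a uniform bound for $u_n$ in $L^2(\tau,t;V)\cap L^\infty(\tau,t;H)$ and, from the equation, for $u_n'$ in $L^2(\tau,t;V^*)$; the growth condition \ref{J2} then bounds $\eta_n$ in $L^2(\tau,t;H)$. Passing to a subsequence (not relabeled) I obtain $u_n\tow u$ in $L^2(\tau,t;V)$ and weakly-$*$ in $L^\infty(\tau,t;H)$, $u_n'\tow u'$ in $L^2(\tau,t;V^*)$, $\eta_n\tow\eta$ in $L^2(\tau,t;H)$, and, by the Aubin--Lions lemma, $u_n\to u$ strongly in $L^2(\tau,t;H)$; after one further subsequence, $u_n\to u$ a.e.\ on $\Omega\times(\tau,t)$ with $|u_n|\le U$ for some $U\in L^2(\Omega\times(\tau,t))$. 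Passing to the limit in \eqref{eq:weak_form} is then routine: every term is linear except $(\eta_n(s),v)$, which converges by weak convergence of $\eta_n$, so $u$ satisfies $\langle u'(s)+Au(s),v\rangle+(\eta(s),v)=\langle f(s),v\rangle$ for all $v\in V$ and a.e.\ $s\in(\tau,t)$. The endpoint conditions come for free: the evaluation maps $v\mapsto v(\tau)$ and $v\mapsto v(t)$ from $\{v\in L^2(\tau,t;V):v'\in L^2(\tau,t;V^*)\}$ into $H$ are bounded and linear, hence weak-to-weak continuous, so $u_n(\tau)\tow u(\tau)$ and $u_n(t)\tow u(t)$ in $H$; comparing with the strong convergences $u_n(\tau)=u_0^n\to u_0$ and $u_n(t)=\xi_n\to\xi$ forces $u(\tau)=u_0$ and $u(t)=\xi$.

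The one genuinely delicate point — and the step I expect to be the main obstacle — is to show that $\eta$ is an admissible selection, i.e.\ $\eta(x,s)\in\partial j(x,s,u(x,s))$ a.e. Here I would reproduce the argument from the end of the existence proof: since $\eta_n(x,s)\in\partial j(x,s,u_n(x,s))$ pointwise, the definition of the Clarke subgradient gives, for every $w\in L^\infty(\Omega\times(\tau,t))$,
\[
\int_\tau^t\!\!\int_\Omega \eta_n\, w\,\d x\,\d s \;\le\; \int_\tau^t\!\!\int_\Omega j^0\big(x,s,u_n(x,s);w(x,s)\big)\,\d x\,\d s .
\]
The left-hand side tends to $\int_\tau^t\!\int_\Omega\eta\,w$ by weak convergence of $\eta_n$; on the right-hand side, \ref{J2} gives $|j^0(x,s,z;w)|\le(c_1+c_2|z|)|w|$, so the integrands are dominated by an $L^1$ function (using $|u_n|\le U$), and Fatou's lemma together with the upper semicontinuity of $z\mapsto j^0(x,s,z;w)$ and the a.e.\ convergence $u_n\to u$ bounds $\limsup_n$ of the right-hand side by $\int_\tau^t\!\int_\Omega j^0(x,s,u(x,s);w(x,s))\,\d x\,\d s$. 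Hence $\int\eta\,w\le\int j^0(x,s,u;w)$ for all $w\in L^\infty$, and the usual localization/measurable-selection argument upgrades this to $\eta(x,s)\in\partial j(x,s,u(x,s))$ a.e. Finally, to land inside $U(t,\tau;u_0)$ I extend $u$ beyond $t$ by concatenating it with any solution of $(\C{P})$ starting from $\xi$ at time $t$ (such a solution exists by the existence theorem, and the concatenation is again a solution, as in the construction of global trajectories); the result is a solution of $(\C{P})$ on $[\tau,\infty)$ with $u(\tau)=u_0$ and $u(t)=\xi$. This completes the plan — the only non-bookkeeping step is the subgradient identification, which requires no new idea beyond the one already used for existence, transferred from $[t_0,t_0+1]$ to $[\tau,t]$.
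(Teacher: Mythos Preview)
Your proof is correct and follows the same overall strategy as the paper: uniform a~priori bounds on $[\tau,t]$, weak compactness, passage to the limit in the equation, and identification of the limiting selection. Two implementation details differ. For the endpoint conditions $u(\tau)=u_0$ and $u(t)=\xi$, the paper argues via the integral identity $u_n(s)-u(s)=u_{0,n}-u(t_0)+\int_{t_0}^s(u_n'-u')\,\d r$ to obtain weak $V^*$ convergence at every $s$, whereas you invoke the weak continuity of the trace maps; both are standard. For the selection step, the paper cites the Aubin--Cellina convergence theorem directly (using a.e.\ convergence of $u_n$ and weak $L^1$ convergence of $\eta_n$), while you reproduce the Fatou/upper-semicontinuity argument already used in the existence proof; your route is more self-contained, the paper's is shorter. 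You are also slightly more careful in noting the concatenation needed to extend $u$ to $[\tau,\infty)$ so as to match the definition of $U(t,\tau;u_0)$.
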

\begin{proof}
Let $u_n(t_0) = u_{0,n}\to u_0$ strongly in $H$. Let moreover $\{u_n\}_{n=1}^\infty$ be a sequence of trajectories with initial conditions $u_{0,n}$ and corresponding selections of $S^2_{\partial j(\cdot,t,u_n(\cdot,t))}$ given by $\eta_n$. Choose $t>t_0$ and assume that $u_n(t)\to w$ strongly in $H$. From a priori estimates analogous to (\ref{eq:Galerkin_bound_1})-(\ref{eq:Galerkin_bound_2}) it follows that $u_n$ is bounded in $L^2(t_0,t;V)\cap L^\infty(t_0,t;H)$ and $u_n'$ is bounded in $L^2(t_0,t;V^*)$. By the growth condition \ref{J2} and the bound on $u_n$ it follows that $\eta_n$ is bounded in $L^2(t_0,t;H)$. Hence, for a subsequence still denoted by $n$, we have
\begin{align*}
& u_n \to u\ \mbox{weakly in}\ L^2(t_0,t;V) \ \mbox{and weakly-$*$ in}\ L^\infty(t_0,t;H),\\
& u'_n \to u'\ \mbox{weakly in}\ L^2(t_0,t;V^*),\\
& \eta_n \to \eta\ \mbox{weakly in}\ L^2(t_0,t;H).
\end{align*}
Integrating from $t_0$ to $t$ the identity 
\begin{equation}
\label{eq:convergence_closedness}
u_n(s)-u(s) = u_{0,n}-u(t_0) + \int_{t_0}^s u_n'(r)-u'(r) \d r,
\end{equation}
which is valid in $V^*$ for all $s\in [t_0,t]$, and taking the duality with $v\in V$ we get
$$
\int_{t_0}^t ( u_n(s)-u(s),v)\, \d s = (t-t_0) (u_{0,n}-u(t_0),v) + \int_{t_0}^t \int_{t_0}^s \langle u_n'(r)-u'(r),v\rangle\, \d r\, \d s.
$$
Passing to the limit $n\to\infty$, both integrals must converge to zero and hence we have $u_{0,n}\to u(t_0)$ weakly in $V^*$ and moreover $u(t_0)=u_0$.
Coming back to (\ref{eq:convergence_closedness}) we take duality with $v\in V$ and pass to the limit 
again which yields $u_n(s)\to u(s)$ weakly in $V^*$ for all $s\in [t_0,t]$. In particular $u(t)=w$. It remains 
to prove that $(u,\eta)$ satisfy (\ref{eq:weak_form}) and (\ref{eq:selection}) on the interval $(t_0,t)$. 
Indeed, we can pass to the limit in the equation
$$
u_n'(s)+Au_n(s)+\eta_n(s)=f(s)\ \ \text{in}\ \ V^*\ \ \text{a.e.}\,\, s\in(t_0,t),
$$
which yields 
$$
u'(s)+Au(s)+\eta(s)=f(s)\ \ \text{in}\ \ V^*\ \ \text{a.e.}\,\, s\in(t_0,t).
$$
Since, for a possibly further subsequence, $u_n(x,s)\to u(x,s)$ a.e. $(x,s)\in \Omega\times (t_0,t)$ 
and $\eta_n\to\eta$ weakly in $L^1(\Omega\times (t_0,t))$,
it follows from the Aubin and Cellina convergence theorem (see \cite{aubin-frankowska}*{Theorem 7.2.2 }) 
that $\eta\in \partial j(x,s,u(x,s))$ a.e. $(x,s)\in \Omega\times (t_0,t)$ and the assertion follows. The proof is complete.
\end{proof}

From Lemma \ref{lemma:inv}, Theorem \ref{thm:attr_exists}, and Corollary \ref{corollary:inv} we 
get the following result on the invariance of the pullback attractor for Problem ($\C{P}$).

\begin{theorem}\label{thm:attr_invariant}
Under assumptions \ref{J1}-\ref{J3}  and \ref{F1}-\ref{F2}   the m-process $U:\br_d\times H\to P(H)$ 
defined by \eqref{eq:reactmulti} has a backward bounded and invariant pullback attractor.
\end{theorem}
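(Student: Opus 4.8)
The plan is to obtain the result directly from Corollary \ref{corollary:inv}, which asserts that every $t_\star$-closed, asymptotically compact, monotonically dissipative \emph{strict} m-process possesses an invariant pullback attractor. Theorem \ref{thm:attr_exists} already supplies a backward bounded pullback attractor $\ba=\{A(t):t\in\br\}$ for $U$, so all that remains is to check the four hypotheses of Corollary \ref{corollary:inv} in the present setting; the backward boundedness then persists automatically, since the pullback attractor, being minimal in the class of closed pullback attracting sets, is unique.

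First, monotonic dissipativity is exactly the content of Lemma \ref{lemma:dissipative_a_priori}. Next, for asymptotic compactness I would invoke Lemma \ref{lemma:pullback_flattening}, which establishes the pullback flattening condition, together with Theorem \ref{thm:metricequiv2}: since $H=L^2(\Omega)$ is a Hilbert space and hence a uniformly convex Banach space, flattening is equivalent there to asymptotic compactness. Third, the $t_\star$-closedness hypothesis — in fact the stronger property of closedness, i.e.\ $t_\star$-closedness for every $t_\star>0$ — is precisely Lemma \ref{lemma:inv}.

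The only point calling for a short argument is that $U$ is a \emph{strict} m-process, as already observed in the discussion following the global existence theorem. The inclusion $U(t,\tau;x)\subset U(t,s;U(s,\tau;x))$ holds for any m-process; for the reverse inclusion one concatenates weak solutions. If $u$ solves Problem $(\C{P})$ on $(\tau,s)$ with $u(\tau)=x$ and $v$ solves it on $(s,t)$ with $v(s)=u(s)$, then the function obtained by gluing $u$ and $v$ at time $s$ again solves Problem $(\C{P})$ on $(\tau,t)$: the weak formulation \eqref{eq:weak_form}--\eqref{eq:selection} prescribes the equation and the selection only pointwise a.e.\ in time, and the solution class $\{w\in L^2_{loc}(\tau,t;V):w'\in L^2_{loc}(\tau,t;V^*)\}$ embeds into $C([\tau,t];H)$, so the glued function lies in this class, attains the datum $x$ at time $\tau$, and its distributional time derivative is the concatenation of $u'$ and $v'$ with no atom at $s$ (the two pieces coincide in $H$ at $s$). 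Hence $U(t,s;U(s,\tau;x))\subset U(t,\tau;x)$ and $U$ is strict.

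With the four hypotheses verified, Corollary \ref{corollary:inv} yields the invariance $A(t)=U(t,s;A(s))$ for all $s\leq t\in\br$; combined with Theorem \ref{thm:attr_exists} this gives a backward bounded and invariant pullback attractor. I expect no genuine obstacle here: the statement is a synthesis of Lemmata \ref{lemma:dissipative_a_priori}, \ref{lemma:pullback_flattening} and \ref{lemma:inv} with the abstract Corollary \ref{corollary:inv}, and the only marginally new verification is strictness, where the single thing to be careful about is that gluing at time $s$ creates no jump in $u'$ — ruled out by the $H$-valued continuity of solutions across $s$.
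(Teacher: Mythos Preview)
Your proof is correct and follows exactly the paper's own route: the result is deduced from Lemma~\ref{lemma:inv}, Theorem~\ref{thm:attr_exists} and Corollary~\ref{corollary:inv}, with the strictness of $U$ taken as a preliminary observation. Your explicit verification of strictness via concatenation of weak solutions simply spells out what the paper asserts without proof right after defining the m-process in~\eqref{eq:reactmulti}.
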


\section{Point-dissipativity and m-semiflows}\label{sec:point}

\noindent Although the picture on the existence of pullback attractors for dissipative m-processes is 
quite complete, we can think of weakening the notion of dissipativity to the
so-called \emph{point-dissipativity}. In which case, the analysis has to
be carried out in a much more careful way, and some sort of continuity
assumptions on the m-process have to be taken into account, even if we do not 
require the attractor to be invariant in the first place. We dedicate
this last section to the discussion of this particular notion of dissipativity. We restrict our 
analysis to the case of \textit{autonomous} dynamical systems, considering, instead of
\emph{m-processes}, their corresponding autonomous version known as \emph{m-semiflows}.

For the general concepts regarding point-dissipativity and nonautonomous single-valued processes,
we refer the reader to the work \cite{caraballo-2010}, in which the existence of a pullback attractor 
is proven under the following requirements:
\begin{itemize}
\item for  given $t\in\mathbb{R}$ and $\tau>0$, the family $\{U(s,s-\tau;\cdot)\, :\, s\leq t\}$ is assumed to be \textit{equicontinuous};
\item the process $U$ must be \textit{pullback strongly asymptotically compact} (see \cite{caraballo-2010}*{Definition 2.9});
\item the process $U$ must be pullback strongly bounded (see \cite{caraballo-2010}*{Definition 2.6}).
\end{itemize}
It is, to the best our knowledge, still an open problem to  study the existence of pullback attractors of 
m-processes when only pullback point-dissipativity is assumed (in place of pullback dissipativity as in Definition \ref{dissipative}). 
Note that point-dissipativity is a natural notion when considering the case of gradient systems 
as explained in  in \cite{carvalho-book}*{Chapter 2.5.1}. In the sequel we will recall several theorems on the 
existence of global attractor for point-dissipative multivalued autonomous dynamical systems and 
we will present two examples which show that, in contrast to dissipative dynamical systems, continuity 
is needed for the existence of global attractor even in the minimal sense. 

\subsection{Multivalued autonomous dynamical systems}
As previously mentioned, we will consider here 
the case of multivalued autonomous system, for which the evolution depends only on
the difference $t-\tau$. In other words, the equality
$$
U(t,\tau;\cdot)=U(t-\tau,0;\cdot)
$$
holds for every $t\geq \tau$, and the one-parameter family of 
set-valued maps
$$
S(t;\cdot)=U(t,0;\cdot):X\to P(X), \qquad t\geq 0,
$$
fulfills the semiflow properties
\begin{itemize}
\item $S(0;x)=\{x\}$, for all $t\geq 0$, $x\in X$;
\item $S(t+\tau;x)\subset S(t;S(\tau;x))$ for all $t,\tau\geq 0$, $x\in X$.
\end{itemize}
Such family will be called \textit{multivalued semiflow}, or \textit{m-semiflow}. If, instead of the inclusion $S(t+\tau;x)\subset S(t;S(\tau;x))$, we have the equality $S(t+\tau;x)= S(t;S(\tau;x))$, then we say that the semiflow is \textit{strict}. We use again the standard notation 
$$
S(t;B)=\bigcup_{x\in B}S(t;x)
$$ 
for $B\subset X$ and $t\geq 0$. If, for an m-semiflow $S$, the set $S(t;x)$ is a singleton for all 
$(t,x)\in [0,\infty)\times X$, then we say that $S$ is a \textit{semigroup}. 
 Note that every semigroup is automatically strict.
We recall here the definitions of dissipativity, point-dissipativity, 
asymptotic compactness and global attractor for m-semiflows.

\medskip

\noindent \textbf{Dissipativity.} 
 A set $B_0\in \C{B}(X)$ is \emph{absorbing} if for every $B\in \C{B}(X)$ there exists an entering time 
 $t_B>0$ such that
	$$
	S(t;B)\subset B_0, \qquad \forall t\geq t_B.
	$$
	In this case, $S(t;\cdot)$ is said to be \emph{dissipative}.

\medskip

\noindent \textbf{Point-Dissipativity.} 
 A set $B_0\in\C{B}(X)$ is \emph{point-absorbing} if for every  $x\in X$ there exists an entering time $t_x>0$ such that
	$$
	S(t;x)\in B_0, \qquad \forall t\geq t_x.
	$$
	In this case, $S(t;\cdot)$ is said to be \emph{point-dissipative}. 
	
\medskip

\noindent \textbf{Asymptotic compactness.}
A semigroup $S(t;\cdot)$ is said to be \emph{asymptotically compact} if the following
holds true:  for any bounded set $B\subset X$  and any sequences  $x_n\in B$,  $t_n\to\infty$,
and $y_n\in S(t_n;x_n)$, there exist $y\in X$  and a subsequence $y_{n_k}\to y$.

\medskip

Following \cite{chep-conti-pata-2012} and our definition of pullback attractor, we will use the following definition of global attractor for m-semiflow.

\medskip

\noindent \textbf{Global attractor.} 
A compact set $A\subset X$ is a global attractor
for an m-semiflow $S$ if it attracts every bounded set in $X$, i.e. if for and $B\in \C{B}(X)$ we have
$$\lim_{t\to\infty}\mbox{dist}(S(t;B),A)=0,$$
and $A$ is the minimal compact set that has this property.

\subsection{Global attractors for m-semiflows}
Observe that an m-semiflow is asymptotically compact if and only if the associated m-process 
is pullback asymptotically compact. Moreover, an m-semiflow is dissipative if and only if the 
associated m-process is monotonically dissipative. Finally, the set $A$ is a global attractor for 
the m-semiflow $S$ if and only if the nonautonomous set
 $\mathbb{A}=\{A(t)\, :\, t\in\mathbb{R}\}$, given by $A(t)=A$ for all $t\in \mathbb{R}$ is the 
 global attractor for the associated m-process $U$. 
As a consequence of Theorem \ref{thm:eqivalence_attractor} and Propositions \ref{prop:nonstrictinv}-\ref{prop:strictinv}, we have the following theorem on 
the existence of global attractors for m-semiflows.

\begin{theorem}\label{thm:last_1}
Let $S$ be an m-semiflow on a complete metric space $X$. $S$ has a 
global attractor $A$ if and only if $S$ is dissipative and asymptotically compact. Moreover:
\begin{itemize}
\item if $S(t_\star;\cdot)$ has closed graph for some $t_\star>0$, then $A\subset S(t_\star;A)$;
\item if $S(t;\cdot)$ has  closed graph for all $t>0$, then $A\subset S(t;A)$ for all $t>0$;
\item if $S$ is strict and if $S(t_\star;\cdot)$ has closed graph for some $t_\star>0$, then $A= S(t;A)$ for all $t>0$.
\end{itemize}
\end{theorem}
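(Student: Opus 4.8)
The plan is to reduce everything to the nonautonomous results already established, using the correspondence between an m-semiflow $S$ and its associated m-process $U(t,\tau;\cdot)=S(t-\tau;\cdot)$. The key observations — all asserted just before the statement — are: $S$ is asymptotically compact iff $U$ is pullback asymptotically compact; $S$ is dissipative iff $U$ is monotonically dissipative; and $A$ is a global attractor for $S$ iff the constant nonautonomous set $\ba=\{A(t):t\in\br\}$ with $A(t)\equiv A$ is the pullback attractor of $U$. Once these translations are in place, the first assertion (existence of $A$ iff $S$ dissipative and asymptotically compact) is exactly Theorem \ref{thm:eqivalence_attractor} read through this dictionary: note that monotonic dissipativity of $U$ automatically gives backward boundedness of the absorbing set (Proposition \ref{prop:dissip}), so the ``backward bounded'' qualifier in Theorem \ref{thm:eqivalence_attractor} is vacuous in the autonomous case and the equivalence transfers cleanly.

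For the three bulleted refinements I would invoke the invariance results of Section \ref{sec:inv}. Observe that the statement ``$S(t_\star;\cdot)$ has closed graph'' translates precisely to ``$U$ is $t_\star$-closed'' (since $U(t,t-t_\star;\cdot)=S(t_\star;\cdot)$ for every $t$, the single closed-graph hypothesis on $S$ yields the closed-graph condition for all $t$ simultaneously), and ``$S(t;\cdot)$ has closed graph for all $t>0$'' translates to ``$U$ is closed''. Hence:
\begin{itemize}
\item If $S(t_\star;\cdot)$ has closed graph, Proposition \ref{prop:nonstrictinv} gives $A(t)\subset U(t,t-t_\star;A(t-t_\star))$, i.e. $A\subset S(t_\star;A)$.
\item If $S(t;\cdot)$ has closed graph for all $t>0$, the same proposition in its ``closed'' form gives $A(t)\subset U(t,s;A(s))$ for all $s\le t$, i.e. $A\subset S(t;A)$ for all $t>0$.
\item If in addition $S$ is strict (hence $U$ is a strict m-process) and $S(t_\star;\cdot)$ has closed graph for some $t_\star>0$, then by the first bullet we have the inclusion \eqref{eq:suffinv}, so Proposition \ref{prop:strictinv} upgrades it to $A(t)=U(t,s;A(s))$ for all $s\le t$, i.e. $A=S(t;A)$ for all $t>0$. (Equivalently, one can cite Corollary \ref{corollary:inv} directly.)
\end{itemize}

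The only genuinely non-routine point is verifying the three claimed equivalences in the dictionary, but these were already announced in the paragraph preceding the theorem, so in the write-up I would merely restate them and refer back. The potential subtlety worth a sentence of care is the ``backward bounded'' issue: one must confirm that when $U$ arises from an m-semiflow, monotonic dissipativity indeed holds with a backward-bounded (in fact constant) absorbing set, so that Theorem \ref{thm:eqivalence_attractor} applies verbatim and the resulting pullback attractor is itself backward bounded and constant in $t$ — which is what forces $\ba=\{A\}$ and hence produces a bona fide global attractor $A$ for $S$. Beyond that, the proof is a short bookkeeping argument chaining Theorem \ref{thm:eqivalence_attractor}, Proposition \ref{prop:nonstrictinv}, and Proposition \ref{prop:strictinv} (or Corollary \ref{corollary:inv}) through the autonomous-to-nonautonomous translation.
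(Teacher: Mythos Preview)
Your proposal is correct and matches the paper's approach exactly: the paper presents Theorem~\ref{thm:last_1} without a standalone proof, instead deriving it ``as a consequence of Theorem~\ref{thm:eqivalence_attractor} and Propositions~\ref{prop:nonstrictinv}--\ref{prop:strictinv}'' via the autonomous-to-nonautonomous dictionary stated just before the theorem. Your identification of the backward-boundedness point as the only place requiring a moment's care is apt and handled correctly.
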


The above theorem can be viewed as a generalization of  \cite{chep-conti-pata-2012}*{Theorem 12} 
 to the case of m-semiflows. Clearly, conditions such as $\omega$-limit compactness or in terms of
 the Kuratowski measure can be included in the above statement, in the same spirit as Theorem \ref{thm:metricequiv}.
Also, it recovers some already known results present in \cites{Coti_Zelati_set_valued,melnik-1998}.
 We now turn to point-dissipative semiflows. A general result is available in \cite{melnik-1998}.
 \begin{theorem}\label{thm:last_3}
Let $X$ be a complete metric space and let $S$ be an m-semiflow such that the map $S(t;\cdot):X\to P(X)$ 
is upper-semicontinuous and has closed values for all $t\geq 0$. Then 
$S$ has a global attractor $A$ if and only if $S$ is point-dissipative and asymptotically compact. Moreover,
$A\subset S(t;A)$ for all $t>0$.
\end{theorem}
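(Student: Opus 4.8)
The plan is to dispatch the necessity by a short direct argument and to build the global attractor explicitly as an $\omega$-limit set, with upper semicontinuity entering only to push point-dissipativity up to the level of bounded sets and, through its consequence that $S(t;\cdot)$ has closed graph, to produce the negative invariance. \emph{Necessity} follows the pattern of Proposition~\ref{prop:asymptotic}: if $A$ is the global attractor, then $N_1(A)$ is a bounded point-absorbing set since $\dist(S(t;x),A)\to0$ for each $x\in X$, so $S$ is point-dissipative; and if $x_n\in B\in\C{B}(X)$, $t_n\to\infty$ and $y_n\in S(t_n;x_n)$, then $\dist(y_n,A)\le\dist(S(t_n;B),A)\to0$, hence $\varrho(y_n,a_n)\to0$ for suitable $a_n\in A$ and compactness of $A$ produces a convergent subsequence, so $S$ is asymptotically compact. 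For later use, note that upper semicontinuity together with closedness of the values forces $S(t;\cdot)$ to have closed graph for every $t\ge0$: if $S(t;\eta_n)\ni\xi_n\to\xi$, $\eta_n\to\eta$ and $\xi\notin S(t;\eta)$, then $\dist(\xi,S(t;\eta))=2\delta>0$ and the open set $\{x:\dist(x,S(t;\eta))<\delta\}$, which contains $S(t;\eta)$, must contain $S(t;\eta_n)\ni\xi_n$ for all large $n$, contradicting $\xi_n\to\xi$.

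\emph{Sufficiency.} Fix a bounded point-absorbing set; since any bounded neighborhood of it is again point-absorbing, we may take it to be an open set $B_0$. Put $K_0=\omega(B_0)=\bigcap_{t\ge0}\overline{\bigcup_{s\ge t}S(s;B_0)}$. By the m-semiflow/m-process correspondence and Lemma~\ref{lem:omegalim}, asymptotic compactness makes $K_0$ nonempty and compact, and $K_0$ attracts $B_0$. The decisive claim is that $\omega(B)\subset K_0$ for every $B\in\C{B}(X)$. Granting it, for every such $B$ we get $\dist(S(t;B),K_0)\le\dist(S(t;B),\omega(B))\to0$, so $K_0$ is a compact set attracting every bounded set; moreover every closed attracting set $C$ contains $K_0$, because each point of $\omega(B_0)$ is a limit of points of $S(t_n;B_0)$ with $t_n\to\infty$ while $\dist(S(t_n;B_0),C)\to0$. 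Hence $K_0$ is the minimal compact (indeed closed) attracting set, i.e. $A=K_0=\omega(B_0)$ is the global attractor. Finally, the argument of Proposition~\ref{prop:nonstrictinv} gives the negative invariance: for $\xi\in A=\omega(B_0)$ there are $t_n\to\infty$ and $\xi_n\in S(t_n;B_0)$ with $\xi_n\to\xi$; the semiflow inclusion yields $\xi_n\in S(t;\eta_n)$ for some $\eta_n\in S(t_n-t;B_0)$; asymptotic compactness gives (along a subsequence) $\eta_n\to\eta\in\omega(B_0)=A$; and the closed-graph property gives $\xi\in S(t;\eta)\subset S(t;A)$, so $A\subset S(t;A)$ for all $t>0$.

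The heart of the matter is the claim $\omega(B)\subset K_0$, and this is where upper semicontinuity cannot be dispensed with: point-dissipativity supplies, for each single point $z$, a time $t_z$ with $S(t_z;z)\subset B_0$, with no uniformity whatsoever, and the examples of Section~\ref{sec:point} exhibit the failure of the conclusion once continuity is dropped. To prove it, fix $B\in\C{B}(X)$; by Lemma~\ref{lem:omegalim} the set $\omega(B)$ is nonempty, compact and attracts $B$. For each $z\in\omega(B)$ pick $t_z$ with $S(t_z;z)\subset B_0$ and, using upper semicontinuity of $S(t_z;\cdot)$ and openness of $B_0$, a neighborhood $W_z$ of $z$ with $S(t_z;W_z)\subset B_0$. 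Extract a finite subcover $W_{z_1},\dots,W_{z_k}$ of $\omega(B)$, and set $W=\bigcup_i W_{z_i}$, $\tau_\star=\max_i t_{z_i}$. Since $\omega(B)$ attracts $B$ and $W$ is an open neighborhood of the compact set $\omega(B)$, there is $T$ with $S(t;B)\subset W$ for all $t\ge T$, and then, for $t\ge T+\tau_\star$, the semiflow inclusion together with $S(t_{z_i};W_{z_i})\subset B_0$ gives
$$
S(t;B)\subset S(t-T;S(T;B))=\bigcup_i S\big(t-T;\,S(T;B)\cap W_{z_i}\big)\subset\bigcup_i S\big(t-T-t_{z_i};\,B_0\big).
$$
Hence $\bigcup_{s\ge t}S(s;B)\subset\bigcup_{s'\ge t-T-\tau_\star}S(s';B_0)$ for all large $t$, and taking closures and intersecting over $t$ gives $\omega(B)\subset\omega(B_0)=K_0$, which is the claim and with it the theorem.
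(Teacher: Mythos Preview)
The paper does not actually prove Theorem~\ref{thm:last_3}; it is stated as a known result with the attribution ``A general result is available in \cite{melnik-1998}.'' So there is no in-paper proof to compare against.

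Your argument is correct and self-contained. The necessity and the closed-graph/negative-invariance parts are routine and match the paper's ambient machinery (Proposition~\ref{prop:asymptotic}, Proposition~\ref{prop:nonstrictinv}). The substantive step is the sufficiency, and your key move---using upper semicontinuity of $S(t_z;\cdot)$ at each $z\in\omega(B)$ together with compactness of $\omega(B)$ to produce a finite cover $W_{z_1},\dots,W_{z_k}$ and a uniform delay $\tau_\star$, then chaining through the semiflow inclusion to land in tails of orbits of $B_0$---is exactly the standard mechanism by which point-dissipativity is upgraded in the presence of upper semicontinuity and asymptotic compactness. The displayed chain
\[
S(t;B)\subset S(t-T;S(T;B))=\bigcup_i S\big(t-T;\,S(T;B)\cap W_{z_i}\big)\subset\bigcup_i S\big(t-T-t_{z_i};\,B_0\big)
\]
is valid since $t-T\ge\tau_\star\ge t_{z_i}$ and $S(t_{z_i};W_{z_i})\subset B_0$; the conclusion $\omega(B)\subset\omega(B_0)$ then follows cleanly. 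Your choice to take $B_0$ open (by thickening a given bounded point-absorbing set) is precisely what makes the upper-semicontinuity step work, and your remark that the counterexamples of Section~\ref{sec:point} show this continuity hypothesis cannot be dropped is apt.
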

The negative invariance above follows from the fact that an m-semiflow which is upper semi-continuous and has closed
valued necessarily satisfies the closed graph condition. The  surprising fact is that, in the strict case, much
weaker requirements imply the same conclusion \cite{Coti_Zelati_set_valued}*{Theorem 4.6}.

 \begin{theorem}\label{thm:last_2}
Let $X$ be a complete metric space and let $S$ be a strict m-semiflow such that the map $S(t_\star;\cdot):X\to P(X)$ 
has closed graph for certain $t_\star>0$. Then $S$ has a global attractor if and only if $S$ is point-dissipative and 
asymptotically compact. Moreover, $A= S(t;A)$ for all $t>0$.
\end{theorem}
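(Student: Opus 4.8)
The plan is to establish the two directions separately: the forward direction (necessity) is elementary, while the reverse direction (sufficiency) carries all the difficulty, and I would reduce it to producing a bounded absorbing set and then invoking Theorem \ref{thm:last_1}.

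For necessity, suppose $S$ has a global attractor $A$. Since $A$ is compact, $B_0:=\overline{N_1(A)}\in\C{B}(X)$; because $A$ attracts every bounded set, in particular every singleton, $B_0$ is point-absorbing, so $S$ is point-dissipative. For asymptotic compactness I would take $B\in\C{B}(X)$, $x_n\in B$, $t_n\to\infty$ and $y_n\in S(t_n;x_n)$; then $\dist(y_n,A)\le\dist(S(t_n;B),A)\to0$, so choosing $a_n\in A$ with $\varrho(y_n,a_n)\to0$ and using compactness of $A$ to extract $a_{n_k}\to a$ yields $y_{n_k}\to a$.

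For sufficiency, by Theorem \ref{thm:last_1} it is enough to show that point-dissipativity, asymptotic compactness, strictness and the closed graph at $t_\star$ together force $S$ to be dissipative: once a bounded absorbing set exists, Theorem \ref{thm:last_1} delivers the global attractor $A$ and, thanks to strictness and the closed graph at $t_\star$, the full invariance $A=S(t;A)$ for all $t>0$. So let $B_0\in\C{B}(X)$ be point-absorbing and put $K:=\omega(B_0)$ (the $\omega$-limit set of $B_0$, read through the correspondence between m-semiflows and m-processes). Asymptotic compactness makes $K$ nonempty and compact and forces $K$ to attract $B_0$ (cf. Lemma \ref{lem:omegalim}); since every orbit eventually enters $B_0$, the inclusion $S(t;x)\subset S(t-t_x;B_0)$ for $t\ge t_x$ shows that $K$ attracts every point of $X$. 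Next I would run the autonomous analogue of Proposition \ref{prop:nonstrictinv} — writing a point of $K$ as $\lim\xi_n$ with $\xi_n\in S(t_\star;\eta_n)$, $\eta_n\in S(t_n-t_\star;B_0)$, $t_n\to\infty$, extracting $\eta_{n_k}\to\eta\in K$ by asymptotic compactness and using the closed graph at $t_\star$ — to obtain the negative invariance $K\subset S(t_\star;K)$, which strictness then propagates to the increasing chain $K\subset S(t_\star;K)\subset S(2t_\star;K)\subset\cdots$ exactly as in Proposition \ref{prop:strictinv}. Finally, combining the compactness of $K$, the point-attraction, the chain above and the funnelling of every orbit into $B_0$, one upgrades ``$K$ attracts points'' first to ``$K$ attracts compact sets'' and then to ``$\overline{N_1(K)}$ absorbs every bounded set'', which is the sought dissipativity.

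I expect this last sequence of upgrades to be the main obstacle. Since no continuity of $S$ is assumed, forward orbits of bounded — or even compact — sets need not stay bounded a priori, so none of these implications is automatic: one must carefully interleave the strong form of asymptotic compactness (which already yields a compact $\omega$-limit for every bounded set), the single-time regularity at $t_\star$ propagated through strictness (precisely where strictness cannot be dispensed with), and the entering-time structure supplied by point-dissipativity. An alternative is to transcribe the argument of \cite{Coti_Zelati_set_valued}*{Theorem 4.6}. Once dissipativity is in hand, the conclusion — including $A=S(t;A)$ for all $t>0$ — follows at once from Theorem \ref{thm:last_1}.
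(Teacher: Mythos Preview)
The paper does not give its own proof of this theorem: it simply states the result and cites \cite{Coti_Zelati_set_valued}*{Theorem 4.6}. Your outline is consistent with that reference (which you yourself mention as an alternative), and the necessity direction is correct as written.

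For sufficiency, your reduction to dissipativity followed by an appeal to Theorem \ref{thm:last_1} is the right strategy, and you have correctly isolated the genuine difficulty: without any continuity beyond the closed graph at a single time $t_\star$, the step from ``$K$ attracts points'' to ``a neighbourhood of $K$ absorbs bounded sets'' is not automatic. Your sketch of that upgrade is still only a sketch---you name the ingredients (compactness of $K$, the increasing chain $K\subset S(t_\star;K)\subset S(2t_\star;K)\subset\cdots$, the entering-time structure from point-dissipativity) but do not show how they combine, and in particular the intermediate claim that $K$ attracts compact sets is itself nontrivial here. Since this is exactly the content of \cite{Coti_Zelati_set_valued}*{Theorem 4.6}, and the present paper makes no attempt to reproduce it, your proposal is appropriate: either transcribe that argument or cite it, as the paper does.
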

The above results yields some consequences in the theory of gradient systems, as point-dissipativity is basically 
encoded in the existence of a Lyapunov functional. In particular, classical theorems hold in more general settings
and the proofs are much simpler (see \cite{Coti_Zelati_set_valued}*{Section 6}).
 
\subsection{Dissipativity vs. point-dissipativity}
The power of the strictness condition is perhaps highlighted in the  following consequence of Theorem \ref{thm:last_2},
which we state here as a corollary.
\begin{corollary}\label{thm:CZ}
Let $S(t;\cdot)$ be a strict and asymptotically compact m-semiflow on a complete metric space $X$, and assume that
there exists $t_\star>0$ such that the  map $S(t_\star;\cdot):X\to P(X)$ has closed graph.
Then 
$S(t;\cdot)$ is dissipative if and only if it is point-dissipative. 
\end{corollary}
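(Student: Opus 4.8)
The plan is to deduce the corollary by simply chaining the two characterizations established earlier, namely Theorem \ref{thm:last_1} and Theorem \ref{thm:last_2}; no new analytic work is needed.

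First I would dispatch the easy implication, which requires none of the standing hypotheses: dissipativity trivially implies point-dissipativity. Indeed, if $B_0\in\C{B}(X)$ is a bounded absorbing set, then for each $x\in X$ the singleton $\{x\}$ is bounded, so there is an entering time $t_{\{x\}}>0$ with $S(t;x)=S(t;\{x\})\subset B_0$ for all $t\geq t_{\{x\}}$; hence $B_0$ is point-absorbing and $S$ is point-dissipative.

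For the reverse implication I would proceed in two steps. Assume $S$ is point-dissipative. Then all the hypotheses of Theorem \ref{thm:last_2} are in force: $S$ is a strict m-semiflow on the complete metric space $X$, it is asymptotically compact, the map $S(t_\star;\cdot)$ has closed graph for the given $t_\star>0$, and point-dissipativity is now assumed. Consequently Theorem \ref{thm:last_2} yields a global attractor $A$ for $S$. In the second step I invoke the ``only if'' direction of Theorem \ref{thm:last_1}: for an m-semiflow on a complete metric space, the mere existence of a global attractor implies that the m-semiflow is dissipative (and asymptotically compact, which is anyway part of our hypotheses). This produces the desired dissipativity and closes the equivalence.

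I do not anticipate any genuine obstacle here, since the entire substance of the argument has already been absorbed into Theorems \ref{thm:last_1} and \ref{thm:last_2}. The only points calling for a modicum of care are purely bookkeeping: checking that the notion of point-dissipativity used as a hypothesis in Theorem \ref{thm:last_2} matches the one in the corollary, and making sure the ``only if'' part of Theorem \ref{thm:last_1} is applied in the correct direction (global attractor $\Rightarrow$ dissipative, not the converse). One may also note in passing that strictness and the closed-graph condition enter only to unlock Theorem \ref{thm:last_2}; without strictness one would instead appeal to Theorem \ref{thm:last_3} under its stronger upper semicontinuity hypothesis, and the identical two-step chaining would still deliver the same conclusion.
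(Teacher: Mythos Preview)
Your proposal is correct and matches the paper's approach: the corollary is stated in the paper as an immediate consequence of Theorem \ref{thm:last_2}, and your two-step chaining (Theorem \ref{thm:last_2} to obtain a global attractor from point-dissipativity, then the ``only if'' direction of Theorem \ref{thm:last_1} to recover dissipativity) is exactly what is intended.
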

This is somewhat counterintuitive at first. Indeed,
it is obvious that a dissipative m-semiflow is also point-dissipative. The converse is not true, 
even in Hilbert spaces and with	linear and continuous semigroups of contractions.

\begin{example}
Consider $X=\ell^2$, the Hilbert space of infinite, square summable, real-valued sequences. Let $\sigma:X\to X$ 
be the left shift, defined by
$$
\sigma((x_1,x_2,x_3,\ldots)=(x_2,x_3,\ldots).
$$
A discrete semigroup may be defined by taking 
$$
S(n;\cdot)=\sigma\circ\sigma\circ\ldots\circ \sigma, \qquad n \text{ times}, n\in\bn.
$$
Clearly, for every $n\in\bn$, $S(n;\cdot)$ is continuous and linear, since $\sigma$ is. Moreover,
$$
\|S(n;\boldsymbol{x})\|\leq\|\boldsymbol{x}\|, \qquad \forall \boldsymbol{x}\in \ell^2. 
$$
\noindent$\diamond$  \textbf{$S(n;\cdot)$ is point-dissipative:} as a point-absorbing set, we can
take the unit ball of $\ell^2$. Fix $\boldsymbol{x}\in \ell^2$. By definition, there exists $N_x>0$
such that
$$
\sum_{i=N_x}^\infty |x_i|^2\leq 1.
$$
As a consequence,
$$
\|S(n;x)\|^2\leq \sum_{i=N_x}^\infty |x_i|^2\leq 1, \qquad \forall n\geq N_x,
$$
proving the claim.

\medskip

\noindent$\diamond$  \textbf{$S(n;\cdot)$ is \emph{not} dissipative:} for any $R>0$ fixed, we will prove that
the ball of radius $R$, which we call $B_R$, is not absorbing. Consider the sequence $\{\boldsymbol{\e}_k\}\subset \ell^2$
of vectors that are all zero except for the $k$-coordinate, and define $\boldsymbol{x}_k=2R \,\boldsymbol{\e}_{k+1}$. Notice that
$\|\boldsymbol{x}_k\|=2R$, so $\{\boldsymbol{x}_k\}$ is a bounded set. However, 
$$
\| S(n;\boldsymbol{x}_n)\|=2R, \qquad \forall n\in \bn,
$$
and therefore $\{\boldsymbol{x}_k\}$ is never going to be absorbed by $B_R$.
\end{example}

One feature missing in the above example is some sort of compactness of the m-semiflow or of the 
ambient space. 
For instance, if $X$ were a compact metric space, then an m-semiflow is
dissipative if and only if it is point-dissipative. Of course, this is a trivial case, since
the whole space can be taken as an absorbing set. 
Also, from Corollary \ref{thm:CZ} we see that if we add asymptotic compactness and a very weak continuity property to the m-semiflow,
the two notions of dissipativity are completely equivalent.
The question is whether the assumption that the m-semiflow has closed graph for some $t_\star>0$
is really needed or not. In other words, for an arbitrary strict and asymptotically compact m-semiflow, does
Proposition \ref{thm:CZ} hold? The answer is negative, even in the single valued case.

\begin{example}
Let $Z=[0,1]\times [0,\infty)$ with the usual euclidean metric.
For $z_0=(x_0,y_0)\in Z$, define $z(t)=(x(t),y(t))=S(t;z_0)$ in the following way.
\begin{itemize}
	\item If $x_0=0$, then
	\begin{equation}\label{eq:1}
	x(t)=0, \qquad 
	y(t)=\begin{cases}
	y_0-t, \quad &t\in[0,y_0],\\
	0, \quad &t>y_0.
	\end{cases}
	\end{equation}
	\item If $x_0>0$, then
	\begin{equation}\label{eq:2}
	x(t)=x_0, \qquad 
	y(t)=\begin{cases}
	y_0-x_0t, \quad &t\in[0,y_0/x_0],\\
	0, \quad &t>y_0/x_0.
	\end{cases}
	\end{equation}	
\end{itemize}

\noindent$\diamond$  \textbf{$S(t;\cdot)$ is point-dissipative:} if we fix any $z_0\in Z$, we have $z(t)=0$  for every $t\geq t_{z_0}$ for some $t_{z_0}>0$. Therefore,the set $B_0=[0,1]\times \{0\}$ is point-absorbing. In fact, any bounded set of the form
$B_0=[0,1]\times [0,M]$ with $M\geq 0$ would do.

\medskip

\noindent$\diamond$  \textbf{$S(t;\cdot)$ is asymptotically compact:} given any bounded set $B\subset Z$, we can
find $M_B>0$ so that $S(t;B)\subset [0,1]\times [0,M_B]$ for every $t\geq 0$. As $Z$ is finite-dimensional, 
this implies asymptotic compactness at once.

\medskip

\noindent$\diamond$  \textbf{$S(t;\cdot)$ is \emph{not} dissipative:} notice first that it suffices to prove that for no $M>0$, the set $B_M=[0,1]\times [0,M]$ is absorbing. To prove that $B_M$ is not absorbing, consider the bounded set $K_M=[0,1]\times \{2M\}$. We want to 
prove that $K_M$ is not absorbed by $B_M$. Let $x_{0,n}=1/n$, for $n\geq 1$. Then, the corresponding sequence
$z_n(t)=(x_n(t),y_n(t))=S(t;(x_{0,n},2M))$ takes the form
	$$
	x_n(t)=\frac{1}{n}, \qquad 
	y_n(t)=\begin{cases}
	2M-t/n, \quad &t\in[0,2M n],\\
	0, \quad &t>2M n.
	\end{cases}
	$$	
Let $t_n:= Mn/2$. It is easy to check that $z_n(t_n)\notin B_M$ for every $n$. 
In view of the fact that $t_n\to \infty$, the dissipativity condition is violated. Hence, $B_M$ is not absorbing. 

\medskip

\noindent$\diamond$  \textbf{$S(t;\cdot)$ is closed for no $t>0$:} let $t>0$ be arbitrarily fixed. For $n\geq 1$, consider
the sequence $z_{0,n}=(1/n,y_0)$ with $y_0>0$. Clearly, $z_{0,n}\to (0,y_0)$. Also, 
$$
z_n(t)=S(t;z_{0,n})=(1/n, y_0 - t/n )
$$
for $n$ sufficiently large. In particular, $z_n(t)\to (0,y_0)$ as $n\to \infty$. However, from \eqref{eq:1} it is apparent that
$$
(0,y_0)\neq S(t;(0,y_0)) \qquad \forall t>0,
$$
concluding the proof of the claim.
\end{example}
It is not hard to see that, in the above example, the function $z(t)=(x(t),y(t))$ solves the ordinary differential equation
\begin{align*}
x'(t)=0,\qquad y'(t)=
\begin{cases}
0, \qquad &\text{if } y(t)=0,\\
-1, \qquad &\text{if } x(t)=0 \text{ and } y(t)>0,\\
-x(t), \qquad &\text{if } x(t)>0 \text{ and } y(t)>0,
\end{cases}
\end{align*}
where the solution is understood in Carath\'{e}odory sense, i.e. we seek for an absolutely 
continuous function such that the above equations hold for a.e. $t\geq 0$. In the next paragraph, 
we also provide an example of a semigroup of solution operators which arises from a partial differential
equation. 

\subsection{A PDE with nowhere closed solution semigroup}
Consider the spaces $V=H^1_0(0,\pi)$ and $H=L^2(0,\pi)$, and denote by $(\cdot,\cdot)$ the scalar product in $H$. 
Let $v^1$ be the first eigenfunction of the operator $-\partial_{xx}$ with the
Dirichlet boundary conditions given in the definition of $V$. Then $v^1(x)=\sin(x)$ for $x\in (0,\pi)$ 
and the corresponding eigenvalue is given by $\lambda_1=1$. 
Moreover $\|v^1\|_V^2= \|v^1\|_H^2 = \pi/2$. 
Denote by $P:H\to H_1$ the orthogonal projection operator onto the one dimensional space 
spanned by the function $v^1$. For any $u\in H$ we have $Pu=\alpha(u) v^1$ with 
$\alpha(u)\in \mathbb{R}$ given as $\alpha(u) = \frac{2}{\pi}(u,v^1)$. Now denote 
$H_+=\{u\in H\, : \alpha(u) >0 \}$ and $H_-=\{u\in H\, : \alpha(u) \leq 0 \}$.
Consider the following two problems:

\medskip

\noindent \textbf{Problem ($+$).} Find $u\in L^2_{loc}(\mathbb{R}^+;V)$ with 
$u_t\in L^2_{loc}(\mathbb{R}^+;V^*)$ such that for a.e. $t\in \mathbb{R}^+$ and all $v\in V$ we have
$$
\int_0^\pi u_t(x,t)v(x)\, \d x + \int_0^\pi u_{x}(x,t)v_{x}(x)\, \d x = 0.
$$

\medskip

\noindent \textbf{Problem ($-$).} Find $u\in L^2_{loc}(\mathbb{R}^+;V)$ with 
$u_t\in L^2_{loc}(\mathbb{R}^+;V^*)$ such that for a.e. $t\in \mathbb{R}^+$ and all $v\in V$ we have
$$
\int_0^\pi u_t(x,t)v\, \d x + \int_0^\pi \big[u_{x}(x,t)+v^1_{x}(x)\big]v_{x}(x)\, \d x = 0.
$$
The first of the above two problems consists in solving the homogeneous heat equation 
$u_t=u_{xx}$, while the second one in solving the heat equation with a source term 
$u_t=u_{xx}+v^1_{xx}$, both with homogeneous Dirichlet boundary conditions. As it is well known,
both problems have unique solutions and the associated semigroups are well defined. 
They will be denoted, respectively, by $S_+:\mathbb{R}^+\times H\to H$ and $S_-:\mathbb{R}^+\times H\to H$. 

Let us take $Pu(t)=\alpha(u(t))v^1$ as the test function in Problem ($+$). It is easy to see that we obtain
$$
\alpha'(u(t))+\alpha(u(t))=0
$$ 
and hence $\alpha(u(t))=\alpha(u(0))\e^{-t}$. This means that 
the region $H_+$ is positively invariant with respect to $S_+$ (i.e. $S_+(t;H_+)\subset H_+$ 
for all $t\geq 0$). Analogously, the same test function  in 
Problem ($-$) yields  
$$
\alpha'(u(t))+\alpha(u(t)) = -1
$$ 
and hence $\alpha(u(t))=\alpha(u(0))\e^{-t}-1+\e^{-t}$. This means that the region $H_-$ 
is positively invariant with respect to $S_-$ (i.e. $S_-(t;H_-)\subset H_-$ 
for all $t\geq 0$). We can define the semigroup
$$
S(t;u_0)=\begin{cases}
S_+(t;u_0)\ \ \text{for}\ \ u_0\in H_+,\\
S_-(t;u_0)\ \ \text{for}\ \ u_0\in H_-.
\end{cases}
$$
The semigroup $S$ defines the solutions of the initial and boundary value problem for the PDE
$$
u_t(t,x)=u_{xx}(t,x)+v^1_{xx}(x)\chi_{H_-}(u(x,t))\ \ \text{on}\ \ [0,\infty)\times[0,\pi],
$$
with homogeneous Dirichlet boundary conditions. In the above formula the expression $\chi_K$ 
denotes the characteristic function of the set $K$ given by 
$$
\chi_K(v)=\begin{cases}
1\ \ \text{for}\ \ v\in K,\\
0\ \ \text{otherwise}.
\end{cases}
$$
Of course the problem is non-local, as the quantity $\chi_{H_-}(u(x,t))$ depends on 
the values of $u(x,\cdot)$ on the whole interval $(0,\pi)$. Moreover, this function is discontinuous. Indeed 
if $v_n\in H_+$ and $v_n\to 0$ strongly in $H$, then $0 = \chi_{H_-}(v_n) \not\to \chi_{H_-}(0) = 1$.
Nonetheless, the longtime behavior of $S(t;\cdot)$ can be studied in the usual way.

\medskip

\noindent$\diamond$  \textbf{$S(t;\cdot)$ is dissipative:} by taking the test function $v=u(t)$ in Problem ($+$)  
we find that 
$$
\|S_+(t;u)\|_H\leq \|u\|_H\e^{-t}
$$ 
for $u\in H$ and $t\geq 0$ and, similarly, by 
taking $v=u(t)+v^1$ in Problem ($-$) we get 
$$
\|S_-(t;u)+v^1\|_H\leq \|u+v^1\|_H\e^{-t}
$$ 
for 
$u\in H$ and $t\geq 0$ (in fact Problem ($-$) is simply Problem ($+$) shifted by $-v^1$). 
Hence, all trajectories of Problem ($+$) converge to zero, while all trajectories of 
Problem ($-$) converge to $-v^1$.  For $\eps>0$, define the sets $N_\eps = B_H(0,\eps)\cup B_H(-v^1,\eps)$. 
If $B\in \C{B}(H)$ and $\|B\|_H:=\sup_{u\in B}\|u\|_H$ then
$$
t\geq t_B=2\ln \frac{\|B\|_H+\|v^1\|_H}{\eps} \quad\implies \quad S(t;B)\subset N_\eps.
$$
In particular, $N_\eps$ is absorbing for every $\eps>0$.
\medskip

\noindent$\diamond$  \textbf{$S(t;\cdot)$ is asymptotically compact:} 
this follows from the well known fact that 
both $S_+$ and $S_-$ are compact (see for example \cite{robinson}).

\medskip

\noindent In view of Theorem \ref{thm:last_1}, the above two facts yield the existence of the global attractor 
for the semigroup $S$. 
This attractor is not connected and, in fact, $A=\{0,-v^1\}$ consists of two points. Since $0\in H_-$,  
the point $0$ is attracted by $-v_1$ and hence the attractor is neither positively, nor negatively 
invariant. By Theorem \ref{thm:last_1}
the semigroup $S$ cannot be $t_\star$-closed. Indeed, it is not. Let the sequence $\{u_n\} \subset H_+$ 
converge strongly in $H$ to $u_\infty$ such that $\alpha (u_\infty)=0$ and let $S(t_\star;u_n)\to z$ 
strongly in $H$. Since the region $H_+$ is positively invariant with respect to $S_+$ we must have $\alpha(S(t_\star;u_n))>0$, and, by the continuity of $\alpha$, we 
must have $\alpha(z)\geq 0$. But, since $u_\infty \in H_-$, it follows that 
$\alpha(S(t_\star;u_\infty))=-1+\e^{-t_\star}<0$. This means that $S(t_\star;u_\infty)\neq z$ and, in turn, 
$S$ cannot be $t_\star$-closed.

\section*{Acknowledgments}
\noindent We would like to thank Grzegorz {\L}ukaszewicz for his useful remarks during the preparation of the paper. 
MCZ was partially supported by the National Science Foundation 
under the grant NSF DMS-1206438 and by the Research Fund of Indiana University.
PK was supported by a Marie Curie International Research Staff Exchange Scheme 
Fellowship within the seventh European Community Framework Programme under 
Grant Agreement no. 2011-295118, by the International Project co-financed by the 
Ministry of Science and Higher Education of Republic of Poland under grant no. W111/7.PR/2012, and by
the National Science Center of Poland under Maestro Advanced
Project no. DEC-2012/06/A/ST1/00262.

\begin{bibdiv}
\begin{biblist}

\bib{aubin-frankowska}{book} {
  author = {Aubin, J.-P.},
    author = {Frankowska, H.},
    title = {Set-Valued Analysis},
    publisher={Birkh\v{a}user},
    place={Boston, Basel, Berlin},
    date={1990},
}

\bib{balibrea}{article}{
   author={Balibrea, F.},
   author={Caraballo, T.},
   author={Kloeden, P. E.},
   author={Valero, J.},
   title={Recent developments in dynamical systems: three perspectives},
   journal={International Journal of Bifurcation and Chaos},
   volume={20},
   date={2010},
   pages={2591--2636},
}

\bib{Ball}{article}{
   author={Ball, J. M.},
   title={Continuity properties and global attractors of generalized
   semiflows and the Navier-Stokes equations},
   journal={J. Nonlinear Sci.},
   volume={7},
   date={1997},
   pages={475--502},
}

\bib{Ball2}{article}{
   author={Ball, J. M.},
   title={Global attractors for damped semilinear wave equations},
   note={Partial differential equations and applications},
   journal={Discrete Contin. Dyn. Syst.},
   volume={10},
   date={2004},
   pages={31--52},
}

\bib{caraballo-2010}{article}{
  author = {Caraballo, T.},
  author = {Carvalho, A. T.},
  author = {Langa, J. A.},
  author = {Rivero, F.},
  title = {Existence of pullback attractors for pullback asymptotically compact processes},
  journal={Nonlinear Analysis},
  volume={72},
  date={2010},
  pages={1967--1976},
}

\bib{gl-2006-NATMA}{article}{
   author={Caraballo, T.},
   author={{\L}ukaszewicz, G.},
   author={Real, J.},
   title={Pullback attractors for asymptotically compact non-autonomous
   dynamical systems},
   journal={Nonlinear Anal.},
   volume={64},
   date={2006},
   pages={484--498},
}

\bib{gl-2006-CR}{article}{
   author={Caraballo, T.},
   author={{\L}ukaszewicz, G.},
   author={Real, J.},
   title={Pullback attractors for non-autonomous 2D-Navier–Stokes
   equations in some unbounded domains},
   journal={C. R. Acad. Sci. Paris, Ser. I},
   volume={342},
   date={2006},
   pages={263--268},
}

\bib{carvalho-book}{book} {
   author = {Carvalho, A. T.},
   author = {Langa, J. A.},
   author = {Robinson, J.},
   title = {Attractors for infinite-dimensional non-autonomous dynamical systems},
   publisher={Springer Verlag},
   place={Berlin},
   date={2014},
}

\bib{chep-conti-pata-2012}{article}{
   author={Chepyzhov, Vladimir V.},
   author={Conti, Monica},
   author={Pata, Vittorino},
   title={A minimal approach to the theory of global attractors},
   journal={Discrete Contin. Dyn. Syst.},
   volume={32},
   date={2012},
   pages={2079--2088},
}

\bib{chep-conti-pata-2013}{article}{
   author={Chepyzhov, Vladimir V.},
   author={Conti, Monica},
   author={Pata, Vittorino},
   title={Totally dissipative dynamical processes and their uniform global attractors},
   journal={Commun. Pure Appl. Anal.},
   volume={13},
   date={2014},
   pages={1989--2004},
}

\bib{CHVI94}{article}{
   author={Chepyzhov, V. V.},
   author={Vishik, M. I.},
   title={Attractors of nonautonomous dynamical systems and their dimension},
   journal={J. Math. Pures Appl. (9)},
   volume={73},
   date={1994},
   pages={279--333},
}

\bib{CVbook}{book}{
   author={Chepyzhov, Vladimir V.},
   author={Vishik, Mark I.},
   title={Attractors for equations of mathematical physics},
   publisher={American Mathematical Society, Providence, RI},
   date={2002},
}

\bib{Ches}{article}{
   author={Cheskidov, Alexey},
   title={Global attractors of evolutionary systems},
   journal={J. Dynam. Differential Equations},
   volume={21},
   date={2009},
   pages={249--268},
}

\bib{ChesDai}{article}{
   author={Cheskidov, Alexey},
   author={Dai, Mimi},
   title={The existence of a global attractor for the forced critical surface quasi-geostrophic equation in $L^2$},
   journal = {ArXiv e-prints},
   eprint = {1402.4801},
   date = {2014},
}

\bib{ChesFo}{article}{
   author={Cheskidov, A.},
   author={Foias, C.},
   title={On global attractors of the 3D Navier-Stokes equations},
   journal={J. Differential Equations},
   volume={231},
   date={2006},
   pages={714--754},
}

\bib{CK1}{article}{
   author={Cheskidov, A.},
   author={Kavlie, L.,},
   title = {Pullback Attractors for Generalized Evolutionary Systems},
   journal = {ArXiv e-prints},
   eprint = {1310.4917},
   date = {2013},
}

\bib{CK2}{article}{
   author={Cheskidov, A.},
   author={Kavlie, L.,},
   title = {Degenerate pullback attractors for the 3D Navier-Stokes equations},
   journal = {ArXiv e-prints},
   eprint = {1403.6200},
   date = {2014},
}

\bib{ChesLu1}{article}{
   author={Cheskidov, Alexey},
   author={Lu, Songsong},
   title={The existence and the structure of uniform global attractors for
   nonautonomous reaction-diffusion systems without uniqueness},
   journal={Discrete Contin. Dyn. Syst. Ser. S},
   volume={2},
   date={2009},
   pages={55--66},
}

\bib{ChesLu2}{article}{
   author={Cheskidov, Alexey},
   author={Lu, Songsong},
   title={Uniform global attractors for the nonautonomous 3D Navier-Stokes equations},
   journal = {ArXiv e-prints},
   eprint = {1212.4193},
   date = {2013},
}

\bib{dlotko}{book}{
   author={Cholewa, J.},
   author={D\l{}otko, T.},
   title={Global Attractors in Abstract Parabolic Problems},
   publisher={Cambridge University Press},
   place={Cambridge},
   date={2000},
}

\bib{clarke}{book}{
   author={Clarke, F. H.},
   title={Optimization and Nonsmooth Analysis},
   publisher={SIAM},
   place={Philadelphia},
   date={1990},
}

\bib{CPT2013}{article}{
   author={Conti, Monica},
   author={Pata, Vittorino},
   author={Temam, Roger},
   title={Attractors for processes on time-dependent spaces. Applications to
   wave equations},
   journal={J. Differential Equations},
   volume={255},
   date={2013},
   pages={1254--1277},
}

\bib{Coti_Zelati_set_valued}{article}{
   author={Coti Zelati, Michele},
   title={On the theory of global attractors and Lyapunov functionals},
   journal={Set-Valued Var. Anal.},
   volume={21},
   date={2013},
   pages={127--149},
}

\bib{CZmeas}{article}{
   author={Coti Zelati, Michele},
   title={Remarks on the approximation of the Navier-Stokes equations via
   the implicit Euler scheme},
   journal={Commun. Pure Appl. Anal.},
   volume={12},
   date={2013},
   pages={2829--2838},
}

\bib{CZTone}{article}{
   author={Coti Zelati, Michele},
   author={Tone, Florentina},
   title={Multivalued attractors and their approximation: applications to
   the Navier-Stokes equations},
   journal={Numer. Math.},
   volume={122},
   date={2012},
   pages={421--441},
}

\bib{CDF}{article}{
   author={Crauel, Hans},
   author={Debussche, Arnaud},
   author={Flandoli, Franco},
   title={Random attractors},
   journal={J. Dynam. Differential Equations},
   volume={9},
   date={1997},
   pages={307--341},
}

\bib{denkowski}{book}{
   author={Denkowski, Z.},
   author={Mig\'{o}rski, S.},
   author={Papageorgiou, N. S.},
   title={An Introduction to Nonlinear Analysis: Theory},
   publisher={Kluwer Academic Publishers},
   place={Boston},
   date={2003},
}

\bib{GMRR}{article}{
   author={Garc{\'{\i}}a-Luengo, Julia},
   author={Mar{\'{\i}}n-Rubio, Pedro},
   author={Real, Jos{\'e}},
   author={Robinson, James C.},
   title={Pullback attractors for the non-autonomous 2D Navier-Stokes
   equations for minimally regular forcing},
   journal={Discrete Contin. Dyn. Syst.},
   volume={34},
   date={2014},
   pages={203--227},
}

\bib{Hale}{book}{
   author={Hale, Jack K.},
   title={Asymptotic behavior of dissipative systems},
   publisher={American Mathematical Society},
   place={Providence, RI},
   date={1988},
}

\bib{lukaszewicz_kalita}{article}{
   author={Kalita, Piotr},
   author={{\L}ukaszewicz, Grzegorz},
   title={Global attractors for multivalued semiflows with weak continuity
   properties},
   journal={Nonlinear Anal.},
   volume={101},
   date={2014},
   pages={124--143},
}

\bib{KKV}{article}{
   author={Kapustyan, O. V.},
   author={Kasyanov, P. O.},
   author={Valero, J.},
   title={Pullback attractors for a class of extremal solutions of the 3D
   Navier-Stokes system},
   journal={J. Math. Anal. Appl.},
   volume={373},
   date={2011},
   pages={535--547},
}

\bib{Kasyanov2012}{article}{
 author={Kasyanov, P.O.},
 author={Toscano, L.},
 author={Zadoyanchuk, N.V.},
 title={Long-time behaviour of solutions for autonomous evolution hemivariational inequality with
        multidimensional ''reaction-displacement'' law},
 journal={Abstr. Appl. Anal.},
 volume={2012},
 date={2012},
 pages={Article ID 450984},
}

\bib{Kasyanov2013}{article}{
 author={Kasyanov, P.O.},
 author={Toscano, L.},
 author={Zadoyanchuk, N.V.},
 title={Regularity of weak solutions and their attractors for a parabolic feedback control problem},
 journal={Set-Valued Var. Anal.},
 volume={21},
 date={2013},
 pages={271--282},
}

\bib{KL}{article}{
   author={Kloeden, Peter E.},
   author={Langa, Jos{\'e} A.},
   title={Flattening, squeezing and the existence of random attractors},
   journal={Proc. R. Soc. Lond. Ser. A Math. Phys. Eng. Sci.},
   volume={463},
   date={2007},
   pages={163--181},
}

\bib{KS}{article}{
   author={Kloeden, Peter E.},
   author={Schmalfuss, Bj{\"o}rn},
   title={Asymptotic behaviour of nonautonomous difference inclusions},
   journal={Systems Control Lett.},
   volume={33},
   date={1998},
   pages={275--280},
}

\bib{Lady}{book}{
   author={Ladyzhenskaya, Olga},
   title={Attractors for semigroups and evolution equations},
   publisher={Cambridge University Press, Cambridge},
   date={1991},
}

\bib{langa-lukaszewicz-real}{article}{
  author={Langa, Jos{\'e} A.},
   author={{\L}ukaszewicz, Grzegorz},
   author={Real, J.},
   title={Finite fractal dimension of pullback attractors for non-autonomous 2D Navier-Stokes equations in some unbounded domains},
   journal={Nonlinear Analysis-Theory Methods and Appl.},
   volume={66},
   date={2007},
   pages={735--749},
}

\bib{LS}{article}{
   author={Langa, J. A.},
   author={Schmalfuss, B.},
   title={Finite dimensionality of attractors for non-autonomous dynamical
   systems given by partial differential equations},
   journal={Stoch. Dyn.},
   volume={4},
   date={2004},
   pages={385--404},
}

\bib{gl-2008-DCDSB}{article}{
   author={{\L}ukaszewicz, Grzegorz},
   title={Pullback attractors and statistical solutions for 2-D
   Navier-Stokes equations},
   journal={Discrete Contin. Dyn. Syst. Ser. B},
   volume={9},
   date={2008},
   pages={643--659},
}

\bib{gl-2010}{article}{
   author={{\L}ukaszewicz, G.},
   title={On pullback attractors in $L^p$ for nonautonomous
   reaction-diffusion equations},
   journal={Nonlinear Anal.},
   volume={73},
   date={2010},
   pages={350--357},
}

\bib{ma_2002}{article}{
   author={Ma, Qingfeng},
   author={Wang, Shouhong},
   author={Zhong, Chengkui},
   title={Necessary and sufficient conditions for the existence of global
   attractors for semigroups and applications},
   journal={Indiana Univ. Math. J.},
   volume={51},
   date={2002},
   pages={1541--1559},
}

\bib{marin-real}{article}{
   author={Marin-Rubio, Pedro},
   author={Real, Jose},
   title={On the relation between two different concepts of pullback attractors for
   non-autonomous dynamical systems},
   journal={Nonlinear Analysis},
      volume={71},
      date={2009},
      pages={3956--3963},
}

\bib{melnik-1998}{article}{
   author={Melnik, Valery S.},
   author={Valero, Jos{\'e}},
   title={On attractors of multivalued semi-flows and differential
   inclusions},
   journal={Set-Valued Anal.},
   volume={6},
   date={1998},
   pages={83--111},
}

\bib{melnik-2008}{article}{
   author={Melnik, Valery S.},
   author={Valero, Jos{\'e}},
   title={Addendum to ”On Attractors of Multivalued Semiflows
   and Differential Inclusions” [Set-Valued Anal., 6 (1998), 83-111]},
   journal={Set-Valued Anal.},
   volume={16},
   date={2008},
   pages={507–-509},
} 

\bib{melnik-2000}{article}{
   author={Melnik, Valery S.},
   author={Valero, Jos{\'e}},
   title={On global attractors of multivalued semiprocesses and
   nonautonomous evolution inclusions},
   journal={Set-Valued Anal.},
   volume={8},
   date={2000},
   pages={375--403},
}

\bib{migorski-book}{book}{
   author={Mig\'{o}rski, S.},
   author={Ochal, A.},
   author={Sofonea, M.},
   title={Nonlinear Inclusions and Hemivariational Inequalities. Models and Analysis of Contact Problems},
   publisher={Springer},
   place={New York},
   date={2013},
}

\bib{miettinen}{article}{
   author={Miettinen, M.},
   author={Panagiotopoulos, P. D.},
   title={On parabolic hemivariational inequalities and
   applications},
   journal={Nonlinear Analysis},
      volume={35},
      date={1999},
      pages={885--915},
}

\bib{MZ}{article}{
   author={Miranville, A.},
   author={Zelik, S.},
   title={Attractors for dissipative partial differential equations in
   bounded and unbounded domains},
   conference={
      title={Handbook of differential equations: evolutionary equations.
      Vol. IV},
   },
   book={
      series={Handb. Differ. Equ.},
      publisher={Elsevier/North-Holland, Amsterdam},
   },
   date={2008},
   pages={103--200},
}

\bib{Naniewicz}{book}{
 author={Naniewicz, Z.},
 author={Panagiotopoulos, P.D.},
 title={Mathematical Theory of
     Hemivariational Inequalities and Applications},
  publisher={Dekker},
 place={New York},
 date={1995},
}

\bib{PZ}{article}{
   author={Pata, Vittorino},
   author={Zelik, Sergey},
   title={A result on the existence of global attractors for semigroups of
   closed operators},
   journal={Commun. Pure Appl. Anal.},
   volume={6},
   date={2007},
   pages={481--486},
}

\bib{robinson}{book}{
   author={Robinson, J. C.},
   title={Infinite-Dimensional Dynamical Systems. An introduction to dissipative
   parabolic PDEs and the Theory of Global Attractors},
   publisher={Cambridge University Press},
   place={Cambridge},
   date={2001},
}

\bib{RO}{article}{
   author={Rosa, Ricardo M. S.},
   title={Asymptotic regularity conditions for the strong convergence
   towards weak limit sets and weak attractors of the 3D Navier-Stokes
   equations},
   journal={J. Differential Equations},
   volume={229},
   date={2006},
   pages={257--269},
}

\bib{RSS}{article}{
   author={Rossi, Riccarda},
   author={Segatti, Antonio},
   author={Stefanelli, Ulisse},
   title={Attractors for gradient flows of nonconvex functionals and
   applications},
   journal={Arch. Ration. Mech. Anal.},
   volume={187},
   date={2008},
   pages={91--135},
}

\bib{Seg1}{article}{
   author={Segatti, Antonio},
   title={Global attractor for a class of doubly nonlinear abstract
   evolution equations},
   journal={Discrete Contin. Dyn. Syst.},
   volume={14},
   date={2006},
   pages={801--820},
}

\bib{SE}{article}{
   author={Sell, George R.},
   title={Global attractors for the three-dimensional Navier-Stokes
   equations},
   journal={J. Dynam. Differential Equations},
   volume={8},
   date={1996},
   pages={1--33},
}

\bib{SellYou}{book}{
   author={Sell, George R.},
   author={You, Yuncheng},
   title={Dynamics of evolutionary equations},
   publisher={Springer-Verlag, New York},
   date={2002},
}

\bib{T3}{book}{
   author={Temam, Roger},
   title={Infinite-dimensional dynamical systems in mechanics and physics},
   publisher={Springer-Verlag},
   place={New York},
   date={1997},
}

\bib{Voro}{article}{
   author={Vorotnikov, Dmitry},
   title={Asymptotic behavior of the non-autonomous 3D Navier-Stokes problem
   with coercive force},
   journal={J. Differential Equations},
   volume={251},
   date={2011},
   pages={2209--2225},
}

\bib{Wang}{article}{
 author={Wang, G.},
 author={Yang, X.},
 title={Finite difference approximation of a parabolic hemivariational inequalities arising from temperature control problem},
journal={Int. J. Numer. Anal. Mod.},
volume={7},
pages={108--124},
date={2010},
}

\bib{valero_book}{book}{
   author={Zgurovsky, Mikhail Z.},
   author={Kasyanov, Pavlo O.},
   author={Kapustyan, O. V.},
   author={Valero, J.},
   author={Zadoianchuk, N.V.},      
   title={Evolution inclusions and variation inequalities for earth data
   processing III},
   publisher={Springer-Verlag},
   place={Berlin},
   date={2012},
}


\end{biblist}
\end{bibdiv}

\end{document}